\newtheorem{defm}{Definition}[section]
\newtheorem{thm}{Theorem}[section]
\newtheorem{lem}{Lemma}[section]
\newtheorem{prop}{Proposition}[section]
\newtheorem{assumption}{Assumption}[section]
\newtheorem{rem}{Remark}
\theoremstyle{plain}
\newcommand{\nab}{\nabla}
\newcommand{\bl}{\biggl}
\newcommand{\br}{\biggr}
\newcommand{\rom}[1]{\mathrm{#1}}
\newcommand{\bigslant}[2]{{\raisebox{.2em}{$#1$}\left/\raisebox{-.2em}{$#2$}\right.}}
\newcommand{\eqname}[1]{\tag*{(#1)}}
\newcommand{\vertiii}[1]{{\left\vert\kern-0.25ex\left\vert\kern-0.25ex\left\vert #1 
    \right\vert\kern-0.25ex\right\vert\kern-0.25ex\right\vert}}
\newcommand{\bbR}{\mathbb{R}}
\newcommand{\bmL}{\bm{L}}
\newcommand{\bmH}{\bm{H}}
\newcommand{\bmD}{\bm{D}}
\newcommand{\bbL}{\mathbb{L}}
\newcommand{\bbN}{\mathbb{N}}
\newcommand{\bbH}{\mathbb{H}}
\newcommand{\bbW}{\mathbb{W}}
\newcommand{\bbQS}{\mathbb{QS}}
\newcommand{\bbD}{\mathbb{D}}
\newcommand{\bbS}{\mathbb{S}}
\newcommand{\tg}{\tilde{g}}
\newcommand{\hg}{\hat{g}}
\newcommand\restr[2]{{
  \left.\kern-\nulldelimiterspace
  #1
  \vphantom{\big|}
  \right|_{#2} 
  }}
  \newcommand{\tobias}[1]{{\color{magenta} TK: #1}}
 \title{Well-posedness and long time dynamics for a quasi-geostrophic ocean-atmosphere model with radiation balance}
  \author{Federico Fornasaro$^1$, Tobias Kuna$^2$, Giulia Carigi$^2$}
  \date{}
\begin{document}
\maketitle

\begin{abstract}
    %
    We investigate a coupled atmosphere-ocean model including the mechanical and thermodynamical interaction between the two fluids for the mid-latitudes. The formulation combines a multilayer quasi-geostrophic dynamical framework with temperature equations incorporating long- and short-wave radiative forcing, as in energy balance models. Within a suitable functional framework, we establish the existence and uniqueness of solutions, and their continuous dependence on the radiation parameters. We also prove that the long-time dynamics are described by a finite-dimensional global attractor and, moreover, that the system possesses a finite set of determining modes that governs its asymptotic behaviour. In particular, we show that the long-term evolution of the ocean’s temperature can be reconstructed solely from observations of the velocity fields across the model’s layers.
\end{abstract}
\vspace{1em}
\noindent\textbf{Keywords}: Quasi-geostrophic equations, Energy Balance models, weak and strong solutions, attractor, determining modes. \\
\textbf{MSC 2020 Classifications}: \textit{Primary}: 35Q86,
35B40 
\textit{Secondary}: 35B41, 
86A08, 76U60 


\section{Introduction}

Research in weather and climate science is a topic of ever increased popularity in recent decades. It encompasses  understanding the effect and sustainability of human activities in relation to the Earth system, the impact on the prospects of human life, mitigating the danger for the most vulnerable ecosystems, improving  prediction of extreme events, identifying tipping-points, and enabling more effective countermeasures.

As pointed out in \cite{GhilLucVariability}, ``the climate system is a forced, dissipative, chaotic system that is out of equilibrium and whose complex natural variability arises from the interplay of positive and negative feedbacks, instabilities and saturation mechanisms. These processes span a broad range of spatial and temporal scales and include many chemical species and all physical phases [...]; it evolves, furthermore, under the action of large-scale agents that drive and modulate its evolution, mainly differential solar heating and the Earth’s rotation and gravitation.'' 

Beside all the aforedescribed features, just the mere beauty and intricateness of climate models is more than enough to motivate substantial efforts in studying them from a mathematical perspective, which may lead also to transferable ideas and techniques to the study of other systems.

In this paper we study the well-posedness of a coupled atmosphere-ocean model suggested by \cite{VannitsemGhil} and its behaviour for long times in terms of global attractor and determining modes. The model proved effective in both medium and long time analysis from a numerical perspective (see \cite{VannitsemGhil},\cite{MAOOAM},\cite{VanLuc},\cite{HamVan}) representing both mechanical and nonlinear thermodynamic features of the physical system via two coupled sets of equations.
The systems contains at the same time nonlinearities of transport and reaction-diffusion type coming from an EBM type models. 
The model presents novelty with respect to the current literature as it considers both atmosphere and ocean and full temperature evolutions for them. To the best of our knowledge, other similar works do not contain all of these features at once. See \cref{sec:lit} for more details.


To better understand the nature and relevance of the model studied in this paper we should recall the concept of hierarchy of models in geophysics. The most comprehensive model, incorporating all the facets of the climate system may not always be the best tool to investigate some of its aspects. Simplified models may, like a caricature, show key features of the system more clearly, lead to a better qualitative understanding, or allow, due to smaller computational cost, a better exploration of the variability, extremes and other rarer states of the system. 
One family of such simplified models is that of Energy Balance models (EBM), see for example \cite{North}. These models concentrate on the interplay of temperature and radiation by replacing fluid motion by diffusive terms leading to systems of reaction-diffusion equations on the sphere, called 2D (EBM), or assuming longitudinal rotation invariance leading to reaction-diffusion models on an interval, which then are 1D-models. Totally neglecting any spatial extension of the earth, the so-called 0D-models, leads to systems of coupled ODEs.

From a dynamical perspective, models of intermediate complexity include aspects of fluid motions but in a simplified form with respect to the full complexity of compressible 3D Navier-Stokes equations. Relevant examples are the primitive equations and the quasi-geostrophic (QG) equations. The former are based on hydrostatic balance, which captures phenomena on synoptic scales, namely the space-time scales where weather patterns appear. The latter are obtained as a perturbation around the geostrophic balance, namely when Coriolis forces dominate the flow, an assumption valid at the mid-latitudes. 

The hierarchy is not totally ordered as models may or may not include other components of the climate system, as for example ocean, infrared radiation, moisture, ice sheets etc.
Though real simplifications, the models in the hierarchy may capture quite complex features. For example, numerical weather prediction started in the late forties with the use of layered QG models for the atmosphere, see \cite{CharneyNeumann}, used up to the seventies. Quasi-geostrophic multi-layer models are still used in recent works in order to make fast and high-resolution simulations for rotating fluids, as in \cite{Quag}, or to describe the greenhouse gas concentration and carbon cycle in paleoclimate, see \cite{Holocene}.


In this paper, we will consider a model consisting out of a two-layer QG atmosphere coupled to temperature fields of the atmosphere transported with the flow and a one-layer QG ocean coupled to an inactive deep ocean and an associated temperature field, sometimes referred to as one-and-a-half layer QG. The full model is based on the idea of treating as dominant the gravitation, Earth's rotation, and solar radiation and keeping the dominant circulation of atmosphere and ocean. The leading role of rotation restricts the validity of the model to mid-latitudes and synoptic scales. The model is suggested in \cite{MAOOAM} though with a different boundary condition for the streamfunctions, following the authors we will use the acronym (MAOOAM) for this model. The dynamic model is arguably the simplest model that captures important key features of weather and climate, like the baroclinic instabilities, the interaction of atmosphere and ocean, the radiation balance including changes of albedo.
MAOOAM has been proposed to study for example the low frequency variability of the atmosphere as in \cite{VannitsemGhil} or the multistability of the ocean-atmosphere system as in \cite{VanLuc}, \cite{HamVan}, or assess Data-Assimilation (DA) algorithms as in \cite{Penny} and hybrid DA-machine learning algorithm in \cite{Carrassi}.
MAOOAM is also just one instance in a wider class of models of intermediate complexity based on QG equations (see \cref{sec:lit}).


Mathematically, our system is a mixture of a system of 2D-Navier-Stokes equations and nonlinear reaction-diffusion equations with extra transport terms. In the latter the nonlinearity comes from the infrared radiation given by a power of the temperature field. Most techniques used are quite standard and are hence only sketched in this paper, for more details see the forthcoming PhD thesis \cite{mythesis}. We will point out in more details only where the treatment differs from the standard approach. Often it is essential that in a-priori estimates terms coming from the nonlinearities either cancel or have the correct sign, putting also restrictions on the boundary condition one can treat. Beside that, for example, the argument for weak-uniqueness is more delicate than it may seem at a first glance. As we will show the ocean temperature field is less regular than the other unknown fields, making the nonlinearity coming from the long-wave radiation hard to control. Though, the outgoing long-wave radiation has a leading term with the expected sign, which can be used to counterbalance other terms. The incoming long-wave radiation describing the heating of the ocean by the atmosphere can only be treated by carefully using regularity of all components of the system. This is a returning theme in the analysis of this model: in order to control the solution one has to use several components and their particular structure in a non-trivial way. Whenever that happens, we give more details. Note that the weak-uniqueness does not seem to hold in the aforementioned (EBM) model due to the absence of the atmospheric streamfunctions, see \cite{LucCan2,EBM_our}. We prove the well-posedness of the Weak and Strong solutions. 

We also show that the solutions are Lipschitz-continuous with respect to the initial condition and to all the forcing term. In particular, we show that the solution depends Lipschitz-continuously on the absorptivity of the atmosphere, which in turn depends on the concentration of greenhouse gases. In this case the argument we use is more delicate since it depends on the form of the nonlinearity of the long-wave radiation and cannot be applied, for example, to the absorptivity of the ocean.

We, furthermore, show the existence of a global attractor, its finite dimensionality and the injectivity of the dynamics on the attractor. These results imply that, despite the infinite-dimensional nature of the governing equations, the long-term behaviour of the system is effectively finite dimensional and and shadowed by its evolution on the attractor.

Another interesting and non-trivial fact is that we show a Foias-Prodi type estimate for our model, that implies the existence of finitely many determining modes, which can be chosen such that they do not depend on the ocean temperature. Estimates of such nature which allow the control of some variables of the model by other have been shown for many fluid dynamics models (see also below). In particular, we mention \cite{Farhat2017} where for a system involving 2D incompressible Navier-Stokes coupled with the temperature equation the convergence for large times of a  data assimilation approach with measurements only on the velocity and not on the temperature. 
Different data assimilation techniques following a variational approach can be used to find initial datum for coupled atmosphere-ocean systems which minimize the distance to a given set of time-distributed observations, as shown for example in \cite{Korn2018},\cite{Korn2021}.
However, the Foias-Prodi estimates used in \cite{Farhat2017} will not be sufficient for our model. In fact, the radiative interaction between atmosphere and ocean creates a state dependent forcing, absent in \cite{Farhat2017}. These and other forcing terms will have to be carefully balanced with the linear terms describing the heat conduction between the layer. The exact numerical values of the parameters of the system will play a crucial role. Using estimates like in  \cite{Chueshov} we can show the existence of determining modes which do not depend, for example, on the streamfunction of the ocean; however, this holds,  as in \cite{Chueshov},for numerical values of the parameters which are essentially large than physically realistic ones. However, we will show that our conditions for determining functions independent of the ocean temperature are fulfilled for the numerical values in the original model.

Existence of solutions with higher regularity than strong solutions is left open, as the nonlinearity does not lie in the domain of the underlying Laplace operator. Another open question is whether initially non-negative temperature profiles stay non-negative under the time development.

\subsection{Previous results on related models}\label{sec:lit}
The multi-layer QG-equations, that constitute the dynamic part of our model, are mathematically very close to 2D-Navier-Stokes equation in vorticity formulation. For multi-layer QG-models, \cite{bernierthesis}, \cite{BernierNS} consider general regular domains with Dirichlet boundary conditions and show well-posedness and existence of the global attractor. In \cite{Chen} existence and uniqueness are proved in the inviscid case with no-flux boundary conditions, and in \cite{BennettKloeden},\cite{OnicaPanetta1}, \cite{OnicaPanetta2} well-posedness is established for the periodic boundary case.
The existence of a finite number of determining functionals for the multi-layer QG model has been shown in \cite{BernierChueshov}. In \cite{Chueshov}, for the two-layer case (with a random forcing on the top layer) the existence of a finite number of determining functionals is shown which only depends on the top layer for large enough friction. 
Moreover \cite{Chueshov} considers a two-layer model with random forcing on the top layer and establishes existence of a finite number of determining functionals, which in particular can be showed to only depend on the top layer for large enough values of the bottom friction.
In more recent works the ergodicity for quasi-geostrophic equations has been treated with random forcing, see \cite{CarigiKunaBroc}, \cite{ButoGroLuoRov}. Quasi-geostrophic equations have been also studied in three dimensions, where “infinitely-many" layers are considered, see \cite[Section 1]{Chen} for an exhaustive list of references about this topic. 

The model MAOOAM under consideration here is positioned in the hierarchy between the EBM and models based on the primitive equation. There are mathematical results for both types of models. If in MAOOAM one considers only the thermodynamic equations which do not depend on the longitude, one arrives at the EBM model with two temperature layers considered in \cite{LucCan}, \cite{LucCan2}.
The well-posedness for an EBM-type model has been discussed first in \cite{HetSch}, where existence of a global solution and the global attractor are shown for a more general system of quasi-linear reaction-diffusion system, see \cite{Diaz} for further reference. The stability of multiple equilibria has been studied for the single-layer model in \cite{Ghil76} taking into account latitude as the only spatial variable, while in \cite{DiazHernTello} is discussed the multiplicity of equilibrium solutions in a two-dimensional EBM according to different values of the solar radiation. Several results have also been proved for two-layer EBM, 
that is, with the same radiation terms as in the model considered here: for the 0D model the stability of the equilibria has been discussed in \cite{LucCan} which will be extended to the 1D model in the paper in progress \cite{LucCan2}.

The primitive equation (PE) has been studied extensively, see for an overview \cite{LiTiti} and for well-posedness results using the theory of evolution equations \cite{HH20}.
Also the existence of a finite dimensional global attractor for the viscous primitive equations has been discussed in \cite{NingTem2014}, \cite{Ning2020}, \cite{ZhouGuo} while in \cite{YuanPei},\cite{ChueshovSqueez} is studied the finite-dimensional dynamics of the solution of PE using respectively a data assimilation algorithm and the squeezing property in order to show the existence of a finite number of determining modes. 

Models based on the primitive equations containing additional physical features have been proposed over the years, yet to the best of our knowledge similar works do not contain both atmosphere and ocean as well as their temperatures with radiation balance. In fact, except in the CAO-model, see \cite{BinzHieber} and references therein, either only the ocean or the atmosphere are modelled by a primitive equation and in the CAO model radiation balance is not included. In \cite{Sarto}, the surface of an ocean model is coupled with an EBM-model, but the atmosphere is not considered. Further, \cite{Korn21} and related works study an ocean model with nonlinear thermodynamics, and in \cite{LiTitiTropical, Doppler} and references within, additional features of the atmosphere like moisture are studied leading to Heaviside nonlinearities.



Finite dimensional dynamics has been also studied for many other physical models: we can cite for example \cite{Farhat2015}, where a finite number of degrees of freedom is showed using a data assimilation algorithm for 2D incompressible Navier-Stokes equations using measurements of only one component of the velocity field. The same approach is used in \cite{3DBernard} for 3D Bénard convection in porous media, a model based on incompressible Euler equation without the transport term coupled with the first law of thermodynamics, to show that the long time dynamics can be determined using only measurements on the temperature, and in \cite{Farhat2017} for a one-layer 2D coupled system which takes into account mechanics and thermodynamics. For an overview about the existence of finite number of determining parameters for the main nonlinear dissipative systems we refer to \cite{AzTiti}.
MAOOAM can be seen as a generalization of the model considered in \cite{Farhat2017} since it is a multi-layer model in which friction and heat exchange between the layers play a fundamental role. Data assimilation algorithms are based on the approach, used for example in \cite{BernierChueshov}, \cite{Chueshov} in the QG framework (respectively deterministic and stochastic) and in the present work in Section~\ref{detmod}, consisting on showing the existence of a finite set of natural parameters of the problem that uniquely determine the asymptotic behaviour of the system. In the data assimilation framework, one takes the physical observations in order to build a linear interpolant operator (for example the projection onto the low Fourier modes), which will be used to estimate the error between the solution of the algorithm and the exact reference solution of the system in terms of the error in the measurements (see \cite{FoiasTiti} for more details).

\section{Physical description of the model, geometry of the domain and boundary conditions}
\subsection{The equations of the model}
\label{physicalsection}
In this section we want to describe the equations of our model, as presented in \cite{MAOOAM}.
The equations for the atmosphere were first proposed by Charney and Straus in \cite{CharneyStraus}: their model is derived applying a scaling argument on the momentum and continuity equations, together with the hydrostatic balance and the first law of thermodynamics, in a two-layer framework (see also \cite[Chapter 6]{Pedlosky} for an introduction on this topic) where the vertical coordinate is discretized (as in \cite{CharneyPhillips}).

For the ocean's streamfunction equation we refer to Pierini \cite{Pierini}, where the ocean is represented using two layers: in the one above the shallow-water approximation is taken into account, while the one below is infinitely deep and quiescent, and then does not play a role in the description of the motion. In the layer above the dissipative effects of the friction are also considered: for a complete derivation of the shallow water equations see again \cite[Chapter 3]{Pedlosky}. Finally, the evolution of the ocean's temperature is described by an energy balance model coupled with the atmosphere (see for example \cite{BB}).

Starting from the vorticity formulation of Navier Stokes equations, and hence using the streamfunctions $\psi_a^{(1)}$ and $\psi_a^{(3)}$ to represent the velocity fields of the two atmospheric layers and $\psi_o$ for the oceanic flow, we can write the system of equations which describes the mechanical interaction between atmosphere and ocean as follows:
\begin{align}
\label{Str}
\pdv{}{t}
\begin{pmatrix}
\Delta \psi_a^{(1)} \\
\Delta \psi_a^{(3)} \\
\Delta \psi_o -\frac{1}{L^2_R}\psi_o
\end{pmatrix}
&+\begin{pmatrix}
J(\psi_a^{(1)},\Delta \psi_a^{(1)}+\beta y)\\
J(\psi_a^{(3)},\Delta \psi_a^{(3)}+\beta y)\\
J(\psi_o,\Delta \psi_o+\beta y)
\end{pmatrix}
=\begin{pmatrix}
\frac{f_0}{p_{\delta}}\\
-\frac{f_0}{p_{\delta}}\\
0
\end{pmatrix}\omega +\nu_S\begin{pmatrix}
\Delta^2 \psi_a^{(1)}\\
\Delta^2 \psi_a^{(3)}\\
\Delta^2 \psi_o
\end{pmatrix} \notag\\
&+\begin{pmatrix}
-k'_d &k'_d &0\\
k'_d &-k'_d-k_d &k_d\\
0 &\frac{C}{\rho h} &-\frac{C}{\rho h}-r
\end{pmatrix}
\begin{pmatrix}
\Delta \psi_a^{(1)} \\
\Delta \psi_a^{(3)} \\
\Delta \psi_o
\end{pmatrix}, \eqname{OAM1}
\end{align}
where $J(u,v)=\partial_x u\ \partial_yv-\partial_y u\ \partial_x v$ is the Jacobian operator.
In absence of forcings and dissipative effects, namely when the right hand side is zero, \ref{Str} represents the conservation of the quasi-geostrophic vorticities $\Delta \psi_a^{(1)}+\beta y$ and $\Delta \psi_a^{(3)}+\beta y$ for the atmosphere and of the potential vorticity $\Delta \psi_o+\beta y -\frac{1}{L^2_R}\psi_o$ for the ocean. Note that the definition the ocean potential vorticity is slightly different from the atmosphere ones because of the term $\frac{1}{L^2_R}\psi_o$ which is related to the shallow water approximation, where $L_R$ is the reduced Rossby deformation radius. The positive constant $\beta$ is related to a simplified way to represent the Coriolis parameter known in geophysics as the $\beta$-plane approximation (see Vallis \cite{Vallis}, Section 2.3.2).

Let us now discuss the terms on the right hand side of \ref{Str}: the first one is related to the vertical velocity function $\omega$ expressed in pressure coordinates, multiplied with $\frac{f_0}{p_{\delta}}$, where $f_0$ is the Coriolis constant and $p_{\delta}$ is the difference between the pressure levels in which $\psi_a^{(1)}$ and $\psi_a^{(3)}$ are defined. The presence of $\omega$ on the right-hand side of \ref{Str} is a consequence of the scaling argument used to derive the QG equations from the Navier-Stokes equations (see e.g. \cite[Section 6.3]{Pedlosky}) combined with vertical discretization. The function $\omega$ is also an unknown of our system, together with $\psi_a^{(1)}$, $\psi_a^{(3)}$ and $\psi_o$, hence we will need another equation to close the system.

With respect to the original model presented by Charney and Straus in \cite{CharneyStraus} and developed by Vannitsem et al. in \cite{MAOOAM},\cite{VannitsemGhil} we also considered diffusive terms given by Laplacians of the vorticities multiplied by the viscosity coefficients $\nu_S$, collected in the second term on the right-hand side of \ref{Str}. These terms, which are frequently used by meteorologists in climate models, are related to the eddy viscosity of the system, which represents momentum transport to larger scales by turbulent eddies. We refer to \cite[Chapter 4]{Pedlosky} for an introduction on turbulent and viscous flows. For our purposes, we consider eddy viscosity terms in order to increase the regularity of solutions, as we will show in the next sections. 

Finally, the third term on the right-hand side is a matrix of friction coefficients between the layers. The coefficients $k'_d$ and $k_d$ quantify the friction respectively between the two atmospheric layers and between the ocean and the atmosphere. The impact of the wind stress on the ocean is given by $\mathcal{W}=d\Delta (\psi_a^{(3)}-\psi_o)$, where $d$ is the drag coefficient of the mechanical ocean-atmosphere coupling and can be expressed as $d=\frac{C}{\rho h}$, with $\rho$ and $h$ respectively density and depth of the fluid layer and $C$ constant related to the curl of the wind stress, for more details see \cite{24}.

Let us consider now the equations for the temperatures of the atmosphere $T_a$ and of the ocean $T_o$. To understand how the two systems interact we have to take into account the long-wave radiation given by the Stefan-Boltzmann law: the atmosphere absorbs the heat coming from the ocean and it emits itself into the ocean and outer space. Therefore the atmospheric total radiation is expressed as:
\[ \mathcal{Q}_a= \epsilon_a\sigma_B T_o|T_o|^3-2\epsilon_a\sigma_B T_a|T_a|^3, \]
where $\sigma_B$ is the Stefan-Boltzmann constant and $\epsilon_a\in (0,1]$ is the emissivity of the atmosphere, taking into account that the atmosphere emits as a grey body rather than a black body. Since we do not know a priori if the temperature is positive, we will take for convenience the quantity $T|T|^3$ instead of $T^4$ in order to consider also the (physically unrealistic) case of negative temperature in Kelvin degrees. 

In the same way, the ocean absorbs the long-wave radiation emitted by the atmosphere, but also almost all incident radiation and it has poor reflective properties. Therefore it emits approximately as a black body (i.e. with emissivity $\epsilon_o\approx 1$), and its radiation balance reads 
\[ \mathcal{Q}_o= \epsilon_a\sigma_B T_a|T_a|^3-\sigma_B T_o|T_o|^3. \]
Other relevant thermodynamic effects are the heat transfer between ocean and atmosphere, and the energy introduced by the short-wave radiation. The heat transfer at the ocean-atmosphere interface is modelled by $\lambda(T_a-T_o)$, where $\lambda$ is a positive coefficient that combines both the latent and sensible heat fluxes. The short-wave radiation terms denoted $R_a(T_a)$ and $R_o(T_o)$ describe the external forcings on the system, as for example the solar radiation fluxes entering in the atmosphere and the ocean. 
In the model studied in \cite{MAOOAM},\cite{VannitsemGhil} the radiation functions are independent of the related temperatures. We choose to consider a more general short-wave radiation which depends also on the temperature. Indeed, starting from the pioneering works of Budyko \cite{Budyko} and Sellers \cite{Sellers} about Energy-Balance Models (EBM), a common choice for the short-wave radiation is a function of the form
\begin{align}
\label{defswr}
    R(t,x,y,T(t,x,y))=R^{(1)}(t,x,y)+R^{(2)}(t,x,y)F(T).
\end{align}
Here $F(T)$ models the effective co-albedo (which represents the fraction of sunlight that is absorbed by the surface of the Earth) and $R^{(1)}(t,x,y)$, $R^{(2)}(t,x,y)$ can take the following form
\[  R^{(i)}(t,x,y)=r_i(t)q_i(x,y), \quad i=1,2\]
where $r_i(t)$ are positive (and possibly periodic) functions allowing to model seasonal cycles and $q_i(x,y)$ are functions that represent insolation.
Classic examples of co-albedo functions typically used in the literature are linear (see for example \cite{Sellers}), piecewise constant or bounded functions (see \cite{North}).  In our discussion the specific form of the radiation will play almost no role, since we will only use some regularity conditions on $R_a, R_o$ that will be specified gradually. 
Therefore, starting from the equations of conservation of potential temperature for the atmosphere and the ocean, we can write the complete equations for $T_a$ and $T_o$ (see again \cite{VannitsemGhil},\cite{MAOOAM}):
\begin{equation}\eqname{OAM2}\begin{split}
\label{Temp}
\pdv{}{t}
\begin{pmatrix}
\gamma_a T_a \\
\gamma_o T_o
\end{pmatrix}
+
&\begin{pmatrix}
\gamma_a J\left(\frac{\psi_a^{(1)}+ \psi_a^{(3)}}{2}, T_a\right)\\
\gamma_o J(\psi_o, T_o)
\end{pmatrix}
=
\begin{pmatrix}
-\lambda -2\epsilon_a\sigma_B |T_a|^3 & \lambda +\epsilon_a\sigma_B |T_o|^3\\
\lambda +\epsilon_a \sigma_B |T_a|^3 & -\lambda - \sigma_B |T_o|^3
\end{pmatrix}
\begin{pmatrix}
T_a\\
T_o
\end{pmatrix} \notag\\
&+
\begin{pmatrix}
R_a(T_a)\\
R_o(T_o)
\end{pmatrix}
+
\begin{pmatrix}
\gamma_a\sigma \frac{p}{R_*}\\
0
\end{pmatrix}\omega+\tilde{\nu}_T
\begin{pmatrix}
\Delta T_a\\
\Delta T_o
\end{pmatrix},\ 
\end{split}\end{equation}
where $\gamma_a$, $\gamma_o$ are the heat capacities of the atmosphere and the active ocean layer, $\sigma$ is the static stability of the atmosphere (taken as a constant), $p$ is a fixed reference value of the pressure and $R_*$ is the perfect gas' constant. Also in this case we need to take into account in the atmospheric equation the vertical velocity $\omega$, which in this case is related to the expression in pressure coordinates of the material derivative of $T_a$. In the equations for $T_o$ and $\psi_o$ we do not have vertical velocity since we consider just one layer.
In equations \ref{Temp} we have again taken into account additional diffusive effects by introducing Laplacians of the temperature fields, which intensity is modulated by the coefficient $\tilde{\nu}_T$ and are technically advantageous for what we want to do.

Finally, the last equation of the model consists in a relation between atmospheric temperature and streamfunction which close the system of equations \ref{Str}, \ref{Temp}.
Taking $T_a^{(0)}$ a constant reference value in space and time for the temperature, we can consider the fluctuations of the atmospheric temperature $\delta T_a$ around $T_a^{(0)}$ discretizing the derivative of the streamfunction with respect to the pressure as follows
\begin{align}
    \label{defTfin}
    T_a=T_a^{(0)}+\delta T_a=T_a^{(0)}+\frac{pf_0}{R_* p_{\delta}}(\psi_a^{(1)}-\psi_a^{(3)}). \eqname{T}
\end{align}
One possible choice to simplify the above expression is to consider, as for example in \cite{MAOOAM}, the atmospheric temperature at the isobaric interface between the two layers, so that $p=p_{\delta}=$500 hPa.

We can take a similar decomposition for $T_o$: \[T_o=T_o^{(0)}+\delta T_o\] but for the ocean we do not have two distinct streamfunctions (because the bottom layer is motionless) and then we cannot represent $\delta T_o$ as a difference between them.

We will call the system composed by equations \ref{Str}, \ref{Temp} and \ref{defTfin} by the acronym MAOOAM, standing for \emph{Modular Arbitrary-Order Ocean-Atmosphere Model}, following the nomenclature given in \cite{MAOOAM}. The words \emph{Modular Arbitrary-Order} follows from the fact that an arbitrary number of Fourier modes is used in \cite{MAOOAM} to discretize the model. The unknowns of MAOOAM are $\psi_a^{(1)},\,\psi_a^{(3)},\,\psi_o,\, T_o$ and $\omega$, but we can easily get rid of $\omega$ using relation \ref{defTfin} and putting the equations for $\psi_a^{(1)},\,\psi_a^{(3)}$ and $T_{a}$ together. Therefore we can consider \ref{Str}, \ref{Temp} a system of four equations with four unknowns.

The model can also be expressed using two different variables for the atmospheric streamfunctions, namely the so called barotropic and baroclinic streamfunctions that we denote as $\psi_a^t$ and $\psi_a^c$ respectively and define as 
\begin{align}
    \label{Jcambcord1}
    \psi_a^t:=\frac{\psi_a^{(3)}+\psi_a^{(1)}}{2}, \qquad \psi_a^c:=\frac{\psi_a^{(3)}-\psi_a^{(1)}}{2}.
\end{align}
 One reason for this choice is that in absence of thermodynamic effects the atmosphere would be described by the barotropic streamfunction alone (see e.g. \cite{Pedlosky}). In the model considered, this is exemplified by the fact that the atmospheric temperature is proportional to the baroclinic streamfunction by \ref{defTfin} i.e.
 \begin{equation}\label{eq:Tb}
     T_a=T_a^{(0)}+\delta T_a=T_a^{(0)}-\frac{2pf_0}{R_* p_{\delta}}\psi_a^c. \eqname{T-b}
 \end{equation}
 We can rewrite \ref{Str} in terms of the barotropic and baroclinic streamfunctions as follows:
\begin{align}
    \pdv{\Delta\psi_a^t}{t}
    +J(\psi_a^t,\Delta\psi_a^t)
    +J(\psi_a^c,\Delta\psi_a^c)
    +\beta\pdv{\psi_a^t}{x}
    &=-\frac{k_d}{2}\Delta(\psi_a^t+\psi_a^c-\psi_o)
    +\nu_S\Delta^2\psi_a^t, \notag\\
    \begin{split}
    \pdv{\Delta\psi_a^c}{t}
    +J(\psi_a^c,\Delta\psi_a^t)
    +J(\psi_a^t,\Delta\psi_a^c)
    +\beta\pdv{\psi_a^c}{x}
    &=-\frac{k_d}{2}\Delta(\psi_a^t+\psi_a^c-\psi_o) \\
    &\quad -2k'_d\Delta\psi_a^c
    -\frac{f_0}{p_{\delta}}\omega
    +\nu_S\Delta^2\psi_a^c. 
    \end{split} \label{bb} \eqname{OAM1-b}\\
     \pdv{}{t}\left(\Delta\psi_o - \frac{1}{L_R^2}\psi_o\right)
     + J(\psi_o,\Delta\psi_o)
     +\beta\pdv{\psi_o}{x} \notag
     &=   \frac{C}{\rho h} \Delta(\psi_a^t+\psi_a^c-\psi_o)
     - r \Delta\psi_o
     + \nu_S \Delta^2 \psi_o .\notag
\end{align}
The only modification in \ref{Temp} is recognising that the atmosphere temperature is advected by $\psi_a^t$, namely we have $J(\psi_a^t, T_a)$. Then the MAOOAM system can be equivalently expressed by the equations \ref{bb},\ref{Temp} and \ref{eq:Tb} with unknowns $\psi_a^t, \psi_a^c$, $\psi_o$ and $T_o$. We will practically use this formulation in \cref{sectenergy} and subsequent sections.

\subsection{Geometry of the Domain and Boundary Conditions}
\label{GeoandBC}
In this subsection we will describe the geometry of the spatial domain of the model just introduced and what boundary conditions we need to take. Following \cite{MAOOAM}, the model is defined on a rectangular domain $Q=[0,L]\times[0,\alpha L]$, where $L$ is a horizontal length scale of order $10^6\ m$ and $\alpha\in (0,1)$ is a constant which allows us to take into account the different extensions in the $x$ and $y$ directions. Since in our model we neglect the spatial extension of the continents, atmosphere and ocean must interact on overlapping areas, and then the geometry  of the domains of the atmosphere $\Omega_a$ and of the ocean $\Omega_o$ must be the same. The atmospheric flows, which in this subsection will be denoted simply by $\psi_a$ independently of the layer they belong to, are zonally periodic (i.e. periodic in the $x$-direction) with no-flux boundary conditions in the meridional direction, i.e. \[ \restr{\pdv{\psi_a}{x}}{y=0,\alpha L}=0,\] whereas the oceanic flow $\psi_o$ has no-flux in both meridional and zonal directions, i.e.
\[ \restr{\pdv{\psi_o}{x}}{y=0,\alpha L}=0,\quad \restr{\pdv{\psi_o}{y}}{x=0,L}=0.\]
A schematic representation of these boundary conditions on $\Omega_a$ and $\Omega_o$ is given in \cref{fig:domain}.

\begin{figure}
\begin{tikzpicture}[scale=1, line join=round, line cap=round]

\def\L{6}
\def\H{1.2}
\def\s{1.2}

\tikzset{
  box/.style={thick},
  flux/.style={gray, thick},
  lab/.style={font=\small}
}

\begin{scope}[shift={(0,0)}]
\draw[box]
    (0,0) -- (\L,0) -- (\L+\s,\H) -- (\s,\H) -- cycle;

  \node at (3.2,0.6) {$\Omega_o$};
  \node[lab, below] at (3,0) {no flux};
  \node[lab, above] at (3.8,1.2) {no flux};
  \node[lab, left] at (0.1,0.6) {no flux};
  \node[lab, right] at (7,0.6) {no flux};
   \node[lab, left] at (-1.2,0.6) {$\psi_o$};

\begin{scope}[shift={(9,-0.1)}, scale=0.9]
  \draw[->, thick] (0,0) -- (1.2,0) node[below] {$x$};
  \draw[->, thick] (0,0) -- (0,1.2) node[left] {$z$};
  \draw[->, thick] (0,0) -- (-0.8,-0.6) node[below left] {$y$};
\end{scope}

\end{scope}

\begin{scope}[shift={(0,2.2)}]
\draw[box]
    (0,0) -- (\L,0) -- (\L+\s,\H) -- (\s,\H) -- cycle;

  \draw[flux] (0,0) .. controls (3,0.6) .. (\L,0);
  \draw[flux] (\s,\H) .. controls (3.8,1.8) .. (\L+\s,\H);

  \node at (3.2,0.6) {$\Omega_a$};
  \node[lab, left] at (0.4,0.6) {periodic};
  \node[lab, right] at (7,0.6) {periodic};
  \node[lab, above] at (3.8,1.2) {no flux};
  \node[lab, below] at (3,0) {no flux};

  \node[lab, left] at (-1.2,0.6) {$\psi_a^{(1)}$, $\psi_a^{(3)}$};
\end{scope}


\end{tikzpicture}\caption{Schematic representation of the domains and boundary conditions for the variables $\bm{\psi}$.}\label{fig:domain}
\end{figure}
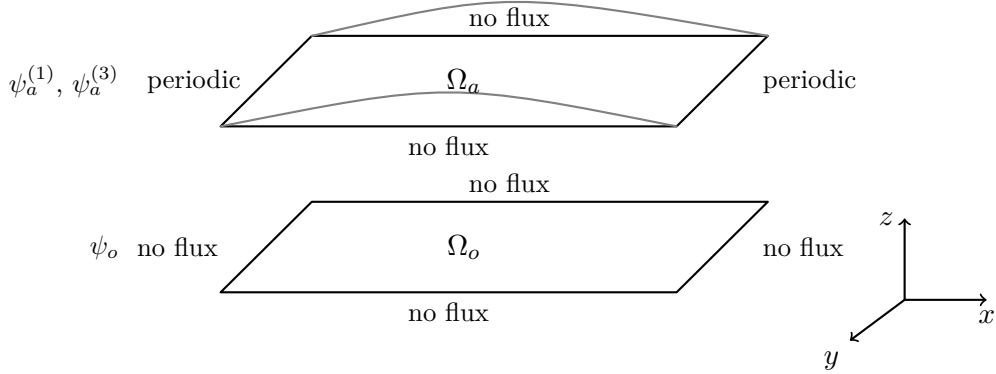

These conditions are used to represent the zonal wind in the atmosphere and the absence of global horizontal circulation of water in the ocean. The latter is a consequence of the presence of continents that block the circulation: in our model we do not consider land, hence continents can be assumed infinitely thin. These boundary conditions can be also expressed in the following way
\begin{equation}
\label{no-flux}
\nab \psi \times \hat{n}=0 \ \text{ on }\Gamma,
\end{equation}
where $\Gamma=\partial Q$. In particular, $\Gamma_a$ is made by two disjoint circumferences and $\Gamma_o$ by a rectangle. Condition \eqref{no-flux} reflects the vanishing geostrophic motion on the boundary, and it is used in Pedlosky \cite{Pedlosky}, Section 6.9, as boundary condition for the ocean. In our case it holds also on the two horizontal circles bounding the atmosphere.
Now, since \eqref{no-flux} implies that $\psi$ on $\Gamma$ depends only on time, assuming that the restriction to the boundary of $\psi_a,\psi_o$ is independent of $t$, we immediately get that $\psi_a,\psi_o$ are constant on $\Gamma$. In particular we will choose 
\begin{align}
\label{dirboundpsi}
\psi_a=0,\quad \psi_o=0\ \text{ on }\Gamma
\end{align} 
in order to neglect the boundary contributions when we will derive the energy inequality \ref{enes1} in Section~\ref{sectenergy}.
It is worth noting that this choice is not compatible with the eigenfunctions considered in \cite{MAOOAM}.

Summing up, we will take the rectangle $Q$ defined above and we will consider as atmospheric domain $\Omega_a=\bigslant{Q}{\{0\sim_x L\}}$, i.e. with the identification of the sides $x=0$ and $x=L$ in order to take into account the periodicity in the zonal direction, while for the ocean we will simply take $\Omega_o=Q$.
On the boundaries $\Gamma_a$ and $\Gamma_o$ we will have the conditions $\psi_a=\psi_o=0$. Since the highest order term in the derivatives of \ref{Str} is given by the biharmonic operator $\Delta^2$ we need also boundary conditions for the derivatives of $\psi_a$ and $\psi_o$. We choose $\Delta\psi_a=\Delta\psi_o=0$ on $\Gamma_a$ and $\Gamma_o$ in order to make $\Delta^2$ self-adjoint. Regarding the temperatures, we can use the definition of atmospheric temperature \ref{eq:Tb} and $\restr{\psi^c}{\Gamma_a}=0$ to get $\restr{T_a}{\Gamma_a}=T_a^{(0)}$. For $T_o$ in general there is no relation with $\psi_o$, but we can choose $\restr{T_o}{\Gamma_o}=T_o^{(0)}$ in analogy with $T_a$. 
In the rest of the article we will just write $\Omega$ to denote both $\Omega_a$ and $\Omega_o$. 

\section{Energy Equations}
\label{sectenergy}
In this section we will derive a fundamental energy estimate that will be used frequently throughout this work. 
The inequality involves several terms, so we will split its derivation in two parts: first we will present an estimate which takes into account only the equations for the streamfunctions. The estimate obtained will depend on the unknown $\omega$: then, taking into account the equations for the atmospheric and ocean temperature $T_a$ and $T_o$, we will be able to eliminate $\omega$. We highlight that the derivation is only formal as we have not established well-posedness of the equation yet, but this estimate will be made rigorous when considering the Galerkin approximation for the equation.

Let us first set some notation. We will use $|u|_{L^p}$ for the standard $L^p(\Omega)$ norm and, for simplicity of notation, we will denote the $L^2(\Omega)$ norm and scalar product respectively $|\cdot|$ and $(\cdot,\cdot)$. Next $H^1_0$ denotes the Sobolev space of $L^2$ functions which vanish on the boundary with weak derivative also in $L^2$ and $H^k$ denotes the space of $k$-times weakly differentiable functions with derivatives in $L^2$, see for example \cite{Brezis}, with norms respectively 
\[ 
    \| u\|_{H^1_{0}(\Omega)}=|\nab u|,\ \quad \Vert u\Vert_{H^k}=\sum_{|\alpha|\le k} |D^{\alpha} u|.
\]
We take the self-adjoint operator $A:=-\Delta$ with Dirichlet boundary conditions.
By the equivalence of the $H^2$ and $D(A)$ norms one has $c\|u\|_{H^2(\Omega)}\le |\Delta u|=:|u|_{D(A)}$ when $u\in D(A)=H^2\cap H_0^1$.
We will denote by $D(A)^{*}$ the dual of $D(A)$, by $H^{-k}$ the dual of $H_0^k$ and by $\langle\cdot,\cdot\rangle$ the dual relation, while for the scalar product in $H_0^1$ we use $(\!(\cdot,\cdot)\!)$.

Consider the first two equations in \ref{bb} for the barotropic and baroclinic streamfunction and take the $L^2$-product with the corresponding streamfunctions. For the third equation in \ref{bb} in the ocean streamfunction $\psi_o$ we take $L^2$-product with $\kappa \psi_o$ where $\kappa=\frac{\rho h k_d}{C}$. This rescaling is done in order to get the same constants on the damping terms in all the equations in order to combine them. We then add the three resulting equations.

Under sufficient regularity, we can express some of the quantities in the equations \ref{bb} containing a divergence-type part as follows:
\begin{align}
&\psi\pdv{\Delta\psi}{t}=\nabla\cdot \bl(\psi\nabla\pdv{\psi}{t}\br)-\nabla\psi\cdot\nabla \pdv{\psi}{t}=\nabla\cdot \bl(\psi\nabla\pdv{\psi}{t}\br)-\frac{1}{2}\pdv{\vert\nabla\psi\vert^2}{t}, \nonumber
\\ & \label{Jpropfond}
J(\psi,\xi)\varphi=\nab\cdot(\xi\varphi\nab^{\perp}\psi)-J(\psi,\varphi)\xi=\nab\cdot(\xi\varphi\nab^{\perp}\psi)+ J(\varphi,\psi)\xi,
\end{align}
where $\psi$ is a generic streamfunction for the atmosphere or the ocean and $\varphi$, $\xi$ are regular functions.
Then the Jacobian terms vanish given the boundary conditions $\restr{\bm\psi}{\partial\Omega}=\restr{\Delta\bm\psi}{\partial\Omega}=0$, where $\bm\psi:=(\psi_a^t,\psi_a^c,\psi_o)$. This leads to the following expression:
\begin{multline}
\label{123}
    \frac{1}{2}\pdv{}{t}\biggl\{
    \vert\nabla \psi_a^t\vert^2
    +\vert\nabla \psi_a^c\vert ^2
    +\kappa\biggl(
        \vert\nabla \psi_o\vert ^2
        +\frac{|\psi_o|^2}{L^2_R}
    \biggr)\biggr\}\\
    +2k'_d \vert\nabla \psi_a^c \vert ^2
    +k_d|\nabla(\psi_a^t+\psi_a^c-\psi_o)|^2
    +r\kappa|\nabla\psi_o|^2\\
    +\nu_S\bigl(
        \vert \Delta \psi_a^t\vert^2
        +\vert \Delta \psi_a^c\vert^2
        +\kappa\vert \Delta \psi_o\vert^2
    \bigr)
    =\frac{f_0}{p_{\delta}}\ \int_{\Omega}\ \psi_a^c\ \omega\ d\tilde{A},
\end{multline}
where the first line is the variation in time of kinetic energy and gravitational potential energy of the ocean, the second takes into account the loss of energy due to the friction between the layers and with the bottom of the ocean, the third is related to the eddy viscosities, and the last term involving $\omega$, where $d\tilde{A}=dx\wedge dy$ is the usual Lebesgue measure in $\bbR^2$, is the only driving term describing the heat exchange in the atmosphere.

Next we crucially observe that the quantity $\int_{\Omega}\psi_a^c\ \omega\ d\tilde{A}$ in \eqref{123} can be expressed in terms of $T_a$ and $T_o$ taking the $L^2$ scalar product of the first equation in \ref{Temp} with $T_a-T_a^{(0)}$ and of the second one with $T_o-T_o^{(0)}$. Here we subtract the mean temperatures to obtain a function which is zero on the boundary, making the boundary terms in the equation vanish.

Moreover taking $\epsilon_a\in (0,1]$ (which is physically realistic), the infrared radiation terms can be rewritten as a negative quantity:
\begin{equation}
\label{relTaTo}
\begin{split}
    &-\sigma_B \vert T_o\vert_{L^5}^5-2\epsilon_a\sigma_B\vert T_a\vert_{L^5}^5+\epsilon_a\sigma_B\bigl((|T_a|_{\bbR}^3T_a,T_o)+(|T_o|^3_{\bbR}T_o,T_a)\bigr)=\\
    &-(1-\epsilon_a)\sigma_B \vert T_o\vert_{L^5}^5-\epsilon_a\sigma_B\vert T_a\vert_{L^5}^5-\epsilon_a\sigma_B\underbrace{\bigl((|T_a|^3_{\bbR}T_a,T_a-T_o)+(|T_o|^3_{\bbR}T_o,T_o-T_a)\bigr)}_{(|T_a|^3_{\bbR}T_a-|T_o|^3_{\bbR}T_o)(T_a-T_o)\ge 0}.
\end{split}
\end{equation}
To control the energy of the incoming radiation terms $R_a(T)$ and $R_o(T)$ 
we assume for both a condition of the form
\begin{align}
    \label{xlpqsxa}
    (R_i(T_i), T_i-T_i^{(0)})\le \mathcal{F}_R(t)&+\frac{1}{10}\epsilon_i\sigma_B|T_i|^5_{L^5}, \quad i =a,\ o
\end{align}
where $\mathcal{F}_R$ is a function integrable locally in time, and $\epsilon_o := 1- \epsilon_a$. 

\begin{rem}
    Requiring \eqref{xlpqsxa} to hold is reasonable for typical formulations of the short-wave radiation. For example assuming the short-wave radiation of the form written in \eqref{defswr} with the co-albedo function linearly-growing in $T$ (as for example in \cite{Sellers}), we can compute for $i = a, o$
    \begin{align*}
(R_i(T_i),T_i-T_i^{(0)})\le C\left(|R_i^{(1)}(t)|_{L^1}+|R_i^{(1)}(t)|_{L^{5/4}}^{5/4}+|R_i^{(2)}(t)|_{L^{5/4}}^{5/4}+|R_i^{(2)}(t)|_{L^{5/3}}^{5/3}\right)+\frac{1}{10}\epsilon_i\sigma_B|T_i|^5_{L^5},
\end{align*}
where we used H\"older and Young's inequalities with $p=\frac{5}{3}$, $q=\frac{5}{2}$ for the term $(R_i^{(1)},T_i)$ and with $p=\frac{5}{4}$, $q=5$ for $(R_i^{(2)} |T_i|,T_i)$. 
\end{rem}
Next we use the damping terms $\epsilon_a\sigma_B|T_a|^5_{L^5}$, $(1-\epsilon_a)\sigma_B|T_o|^5_{L^5}$ obtained in \eqref{relTaTo} to counterbalance the right hand side of \eqref{xlpqsxa}, so that, using standard estimates as H\"{o}lder and Young's inequalities, we can control the remaining quantities in the expression for $\int_{\Omega}\psi^c\omega\ d\tilde{A}$. We can then find an energy bound independent of the temperature functions, but dependent on the constant reference values $T_a^{(0)}$ and $T_o^{(0)}$ (for more details see \cite{mythesis}). 

Now substituting $\omega$ in \eqref{123} and integrating with respect to $t\in [0, t_*]$ we find
\begin{multline}
    \frac{1}{2}\biggl\{\vert\nabla \psi_a^{t}(t)\vert ^2+\vert\nabla \psi_a^{c}(t)\vert ^2+\kappa\bl(\vert\nabla \psi_o(t)\vert ^2+\frac{1}{L^2_R}|\psi_o(t)|^2\br)\\
    +\mu(\gamma_a\vert T_a(t)-T_a^{(0)}\vert^2+\gamma_o\vert T_o(t)-T_o^{(0)}\vert^2)\br\}\\
    +\nu_S\int_0^t\bigl(\vert \Delta \psi_a^t(s)\vert^2 + \vert \Delta \psi_a^c(s)\vert^2+\kappa\vert \Delta \psi_o(s)\vert^2\bigr)ds+\nu_T\int_0^t (|\nabla T_a(s)|^2+|\nabla T_o(s)|^2) ds\\
    +\lambda\int_0^t|T_a(s)-T_o(s)|^2 ds + \mu\sigma_B\tilde{\epsilon}_a\bl(\int_0^t|T_a(s)|^5_{L^5}ds+\int_0^t|T_o(s)|^5_{L^5}ds\br)\\
    +2k'_d \int_0^t\vert\nabla \psi^c_a(s)\vert ^2ds+k_d\int_0^t\vert\nabla (\psi^t+\psi^c-\psi_o)(s)\vert ^2ds + r\kappa\int_0^t\vert\nabla\psi_o(s)\vert ^2 ds\\
     \le \frac{1}{2}\biggl\{\vert\nabla \psi_a^t(0)\vert ^2+\vert\nabla \psi_a^c(0)\vert ^2+\kappa\bl(\vert\nabla \psi_o(0)\vert ^2+\frac{1}{L^2_R}|\psi_o(0)|^2 \br)\\+\mu(\gamma_a\vert T_a(0)-T_a^{(0)}\vert^2 +\gamma_o\vert T_o(0)-T_o^{(0)}\vert^2)\br\}+E(t),
\label{enes1}
\eqname{En.In.}
\end{multline}
where we defined the constant parameters 
\begin{align*}
    \mu:=\frac{R_*^2}{2p^2 \gamma_a\sigma},
    \qquad 
    \nu_T:=\mu\tilde{\nu}_T,\qquad \tilde{\epsilon}_a:=\frac{7}{10}\min\{\epsilon_a,1-\epsilon_a\}.
\end{align*} 

The inequality \ref{enes1} just derived describes the energy dissipation caused by different terms: in the third line by viscosity, in the fourth by heat exchange and outgoing infrared radiation and in the fifth by the friction between the layers and with the bottom of the ocean. To have an idea of the order of magnitude of the coefficients in the time derivative terms of \ref{enes1} see Table \ref{tabella1}.
\begin{table}
\centering
\begin{tabular}{||c c c||} 
 \hline
 Parameter (Unit) & Value & Physical meaning\\ [0.5ex] 
 \hline\hline
 $\kappa$ & 15 & 
 Intensity of the evolution of $|\nab\psi_o|^2$\\ 
 \hline
 $\frac{\kappa}{L^2_R}\ (m^{-2})$ & $O(10^{-8})$ & Intensity of the evolution of $|\psi_o|^2$ \\
 \hline
 $\mu\ (m^2\cdot kg\cdot K)$ & $O(10^{-6})$ & Thermodynamical scaling factor\\
 \hline
 $\gamma_a\mu\ (m^2\cdot s^{-2}\cdot K^{-2})$ & 9 & Intensity of the evolution of $|T_a|^2$\\
 \hline
 $\gamma_o\mu\ (m^2\cdot s^{-2}\cdot K^{-2})$ & 500 & Intensity of the evolution of $|T_o|^2$\\ [1ex] 
 \hline
\end{tabular}
\caption{Magnitude and physical interpretation of the constants involved in the time-derivative terms of \ref{enes1}.}
\label{tabella1}
\end{table}
Moreover the quantity $E(t)$ contains, beside terms coming from the short-wave radiation like $\mathcal{F}_R$ and the size of the domain $\Omega$, also terms depending on the reference temperature values $T_a^{(0)}, T_o^{(0)}$. 

We observe that our estimate does not show a decreasing behaviour for the energy if we assume the forcing terms $R_a,R_o$ to be null, as one may expect. This is due to the terms containing $T_a^{(0)}, T_o^{(0)}$. We can avoid these terms if we additionally assume the property
\begin{align}
\label{intomega}
\int_{\Omega} \omega\ d\tilde{A}=0,
\end{align}
which can be justified by the same scaling argument used to derive \ref{Str} and \ref{Temp}.
\section{Existence and Uniqueness of the Weak Solution}
\label{Chapweak}
In this section we want to show that the system of PDEs \ref{bb}, \ref{Temp} admits a unique solution in suitable spaces of functions. In order to define classes of solutions to our model, we need families of Banach-space valued functions: these spaces will be denoted by
\begin{align*}
L^p_tH^s_x:=L^p(0, t_*;H^s(\Omega)),
\end{align*} 
where $t_*$ is finite but arbitrary and we 
specify 
when we actually can take $t_*=+\infty$.

For our purposes it will be convenient to denote the cartesian product between three or four Banach spaces as follows: $ \bm{X}=X^3$, $\mathbb{X}=X^4$. Furthermore, we consider the vector of streamfunctions $\bm{\psi}=(\psi_a^{t},\psi_a^{c},\psi_o)$ and the vector $(\bm{\psi},T_o)$.
On $\bm{X}$ we will often use the following weighted norm:
\[  \Vert\bm{\psi}\Vert_{\bm{X}}^2=|\psi_a^t|_X^2+|\psi_a^c|_X^2+\kappa|\psi_o|_X^2, \]
where $\kappa=\frac{\rho h k_d}{C}$ has been introduced in the previous section.
In particular we will denote $\bmD(A) = D(A)^3$ and $\bbD(A) = D(A)^4$.
 
We start by giving a definition of weak solutions, in analogy with the weak solution for Navier-Stokes equations presented for example in \cite{RobinsonNS}:
\begin{defm}[Weak Solution for MAOOAM]
\label{defweak}
We say that a function $(\bm{\psi},T_o)$ is a Weak Solution satisfying the initial condition $(\bm{\psi}(0),T_o(0))\in (\bmH^1_0\times L^2)(\Omega)$ for the system \ref{bb}, \ref{Temp}, \ref{eq:Tb} if
\begin{enumerate}
\item[1)] $\bm{\psi}\in \mathbf{L}^{\infty}_t\mathbf{H}^1_{0,x}\cap \mathbf{L}^2_t\mathbf{H}^2_x$ and $T_o-T_o^{(0)}\in L^{\infty}_tL^2_x\cap L^2_tH^1_{0,x}\cap L^5_tL^5_x$.
\item[2)] $(\bm{\psi},T_o)$ satisfies the weak form of the equations \ref{bb}, \ref{Temp} in the space $C^{1}([0, t_*]; D(A))^{*}$.
\end{enumerate}
Moreover, if also the energy inequality \ref{enes1} holds we will call our solution a \emph{Leray-Hopf} weak solution.
\end{defm}
Note that in the second point of Definition~\ref{defweak} we have to interpret the equation in a weak form, that is the biharmonic operator is interpreted as $(\Delta \psi, \Delta \varphi)$ for all $\varphi\in D(A)$ which express the boundary condition $\restr{\Delta\psi}{\partial\Omega}=0$, and as usual the Jacobian operator has also to be interpreted in a generalized form.
We observe that all the streamfunctions satisfy automatically the no-flux boundary condition $\nab \psi \times \hat{n}=0$  on $\partial\Omega$ as discussed in Section~\ref{GeoandBC}.

To show well-posedness in the weak sense just defined we will require the short-wave radiation forcings to fulfil the following conditions:
\begin{assumption}\label{assum:R:weaksol}
    There exists $\mathcal{L}_{R_a}(t)$ and $\mathcal{L}_{R_o}(t)$ locally integrable in time such that 
\begin{align}
\label{xjgpw}
    &(R_a(\hg)-R_a(\tg), \hg- \tg)\le \mathcal{L}_{R_a}(t)|\nab (\hg - \tg)|^2, \notag\\
    &(R_o(\hg)-R_o(\tg), \hg- \tg)\le \mathcal{L}_{R_o}(t)|\hg - \tg|^2 +\frac{\nu_T}{8}|\nab (\hg - \tg)|^2,
\end{align}
for any $\tg,\, \hg \in H^1$.
\end{assumption}
These requirements are effectively less strict than asking for $R_a$, $R_o$ to be for example globally Lipschitz from $H^1$ into $L^2$ and they would accommodate also for appropriate integral functions. 

We are ready to state the main theorem on this section:
\begin{thm}[Existence and Uniqueness of the Weak Solution for MAOOAM]
\label{teoexuniqweak}
Assume that the short-wave radiation functions $R_a$ and $R_o$ are bounded and continuous in $L^{2}_tL^{2}_x$ and $L^{5/4}_tL^{5/4}_x$, respectively, and they satisfy \eqref{xlpqsxa}. Then for any initial condition $(\bm{\psi}(0),T_o(0))\in (\bmH^1_0\times L^2)(\Omega)$, there exists a Weak Solution 
\[
    (\bm{\psi},T_o)\in C([0, t_*],(\bmH_0^1\times L^2)(\Omega))
\] 
for the system \ref{bb}, \ref{Temp}, \ref{eq:Tb}. Moreover if \cref{assum:R:weaksol} holds then the solution is unique and depends continuously on the initial conditions.
\end{thm}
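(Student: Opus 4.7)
The plan is to follow the classical Galerkin scheme, adapted to the coupled structure of MAOOAM, and then to extract a weak solution via compactness, upgrade it to be time-continuous, and finally obtain uniqueness and stability by a delicate difference estimate.

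First I would construct finite-dimensional approximations. Let $\{e_k\}_{k\in\bbN}$ be the orthonormal eigenbasis of the Dirichlet Laplacian on $\Omega$ (for $\Omega_a$ with the periodic/no-flux boundary as in Section~\ref{GeoandBC}, and analogously for $\Omega_o$), and let $P_n$ denote the $L^2$-projection onto the span of the first $n$ modes. Writing the system \ref{bb}, \ref{Temp} in terms of $(\bm{\psi}_n, T_{o,n})$ projected onto these subspaces turns it into a finite-dimensional ODE system with locally Lipschitz right-hand side (the polynomial radiation $T|T|^3$ is locally Lipschitz on the finite-dimensional space). Cauchy--Lipschitz gives local existence of $(\bm{\psi}_n, T_{o,n})$.

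Next I would derive \emph{a priori} estimates by redoing the formal derivation of \ref{enes1} rigorously on the Galerkin level, testing the first two equations of \ref{bb} against $\psi_{a,n}^t, \psi_{a,n}^c$, the third against $\kappa \psi_{o,n}$, and the temperature equations against $T_{a,n}-T_a^{(0)}$ and $T_{o,n}-T_o^{(0)}$, using \eqref{xlpqsxa} and the sign trick \eqref{relTaTo}. This yields uniform bounds
\[
    \bm{\psi}_n \in L^\infty_t\bmH^1_{0,x}\cap L^2_t\bmD(A),\qquad T_{o,n}-T_o^{(0)}\in L^\infty_tL^2_x\cap L^2_tH^1_{0,x}\cap L^5_tL^5_x,
\]
which in particular rule out blow-up and give global existence. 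Testing the equations in the Galerkin form against elements of $D(A)$ and using the bounds above, one checks that $\partial_t\bm{\psi}_n$ and $\partial_t T_{o,n}$ are bounded in suitable dual spaces (say, $L^{4/3}_t D(A)^\ast$). Aubin--Lions then supplies strong convergence of subsequences: $\bm{\psi}_n\to\bm{\psi}$ in $L^2_t\bmH^1_{0,x}$ and $T_{o,n}\to T_o$ in $L^2_tL^2_x$, and thus also pointwise a.e.\ up to a further subsequence. Passing to the limit in the Jacobian terms uses the strong $L^2_tH^1_x$ convergence of one factor against the weak convergence of the other in $L^2_tH^1_x$; passing to the limit in the radiation nonlinearities $T_o|T_o|^3$ uses the a.e.\ convergence together with the uniform $L^5_tL^5_x$ bound and Vitali's convergence theorem (the sequence is equi-integrable to the power $5/4$). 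The other terms are linear and handled by weak convergence.

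To obtain $(\bm{\psi},T_o)\in C([0,t_*]; \bmH^1_0\times L^2)$, I would combine the $L^\infty_t$ bounds with the equation to deduce weak continuity of $(\bm{\psi}(t),T_o(t))$ into the appropriate Hilbert space, then upgrade to strong continuity by showing that $t\mapsto \|\bm{\psi}(t)\|_{\bmH^1_0}^2 + \mu(\gamma_a|T_a-T_a^{(0)}|^2+\gamma_o|T_o-T_o^{(0)}|^2)$ is continuous using the energy identity (valid after density in the test functions); strong + weak continuity give continuity in norm.

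For uniqueness and continuous dependence, take two Leray--Hopf solutions $(\bm{\psi}_1,T_{o,1})$, $(\bm{\psi}_2,T_{o,2})$, set $\bm{\Psi}=\bm{\psi}_1-\bm{\psi}_2$ and $\Theta=T_{o,1}-T_{o,2}$, and write the system satisfied by $(\bm{\Psi},\Theta)$. Testing against $(\bm{\Psi},\mu\gamma_o\Theta)$ (with the same weighting that produced \ref{enes1}), the critical terms are
\begin{itemize}
\item the Jacobian differences, controlled by Ladyzhenskaya-type inequalities $|J(\psi,\Delta\psi)\varphi|\le c|\nab\psi|^{1/2}|\Delta\psi|^{1/2}|\nab\varphi|\,|\Delta\varphi|^{1/2}|\nab\varphi|^{1/2}$ and absorbed into the viscous dissipation $\nu_S|\Delta\bm{\Psi}|^2$;
\item the infrared radiation difference $|T_{o,1}|^3T_{o,1}-|T_{o,2}|^3T_{o,2}$ contracted with $\Theta$, which has the correct sign since the map $\xi\mapsto\xi|\xi|^3$ is monotone, so this term is non-positive and may be discarded;
\item the cross radiation $(|T_{a,1}|^3T_{a,1}-|T_{a,2}|^3T_{a,2},\Theta)$ and $(|T_{o,1}|^3T_{o,1}-|T_{o,2}|^3T_{o,2}, T_{a,1}-T_{a,2})$, controlled using $T_a= T_a^{(0)}-\frac{2pf_0}{R_*p_\delta}\psi_a^c$ so that $T_{a,1}-T_{a,2}$ inherits $H^1_0$ regularity from $\psi_a^c$, and by H\"older with the uniform $L^5_tL^5_x$ bound on $T_{o,i}$, $T_{a,i}$;
\item the short-wave radiation differences, absorbed via \cref{assum:R:weaksol}.
\end{itemize}
This yields $\frac{d}{dt}E(t)\le \phi(t)E(t)$ for an integrable $\phi$, where $E(t)=\|\bm{\Psi}(t)\|_{\bmH^1_0}^2 + \mu\gamma_o|\Theta(t)|^2$, and Gronwall gives both uniqueness (when $E(0)=0$) and Lipschitz dependence on the initial data. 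The hardest step, as the authors already flag in the introduction, is the control of the cross-radiation term coupling $T_o$ (only $L^2$-regular) with $T_a$: here one has to exploit that $T_a-T_a^{(0)}$ is controlled in $H^1_0$ via $\psi_a^c$ and that the self-radiation of $T_o$ supplies a non-positive term with the correct sign to absorb the leading contribution.
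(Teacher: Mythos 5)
Your architecture --- Galerkin truncation on the eigenbasis of $A$, the rigorous version of the energy inequality \ref{enes1} at the truncated level, Aubin--Lions plus a.e.\ convergence for the power nonlinearities (the paper cites a boundedness-plus-a.e.-convergence lemma where you cite Vitali; these are interchangeable), and a weighted difference estimate closed by Gronwall --- is exactly the route taken in the paper, and you correctly isolate the delicate term. One small imprecision: $\partial_t T_{o,n}$ is only bounded in $L^{5/4}_tH^{-1}_x$, not $L^{4/3}_t$, since the limiting contribution $|T_{o,n}|^3T_{o,n}$ lives only in $L^{5/4}_tL^{5/4}_x$; this does not affect Aubin--Lions.

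The one step that would not close as written is your treatment of the mixed quartic term $(|T_{o,1}|^3T_{o,1}-|T_{o,2}|^3T_{o,2},\,\delta T_a)$ ``by H\"older with the uniform $L^5_tL^5_x$ bound on $T_{o,i}$''. Writing the difference as roughly $(|T_{o,1}|^3+|T_{o,2}|^3)\,\delta T_o$, H\"older with $\delta T_o$ placed in $L^2_x$ and $\delta T_a$ in $L^q_x$ (to be absorbed by $|\nabla\delta T_a|$ via Sobolev embedding) forces $|T_{o,i}|^3\in L^p_x$ with $\tfrac1p+\tfrac1q=\tfrac12$, hence $p>2$ and $T_{o,i}\in L^{>6}_x$; the $L^5_x$ bound cannot reach this, and even if it could, the local-in-time integrability of the prefactor $|T_{o,i}(t)|^3_{L^{3p}}$ would not follow from $L^5_t$. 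The paper's resolution (see \eqref{sviluppo}--\eqref{stimaTo}) is the space-time interpolation of the \emph{full} regularity class $T_o\in L^\infty_tL^2_x\cap L^2_tH^1_{0,x}\cap L^5_tL^5_x$, which yields $T_o\in L^3_tL^8_x$, so that $(|\hat T_o|^3_{\bbR}\delta T_a,\delta T_o)\le|\hat T_o|^3_{L^8}|\delta T_a|_{L^8}|\delta T_o|\le C|\hat T_o|^3_{L^8}\bigl(|\nabla\delta T_a|^2+|\delta T_o|^2\bigr)$ with an $L^1_t$ prefactor. Your fallback suggestion of absorbing the leading part into the sign-definite self-radiation of $T_o$ is not what the paper does and is not obviously safe: Young's inequality leaves a remainder $\int|T_o|^3|\delta T_a|^2$ that requires the same $L^3_tL^8_x$-type integrability anyway, and the absorption constant is delicate because the cross term carries $\epsilon_a\sigma_B$ against the self-radiation's $\sigma_B$ multiplied by the sub-unit monotonicity constant of $\xi\mapsto|\xi|^3\xi$. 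Supplying the interpolation $T_o\in L^3_tL^8_x$ is the missing ingredient; with it your scheme closes as in the paper.
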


Assuming that the short-wave radiations $R_a$ and $R_o$ are functions of space and temperature but not of time (i.e. the solar radiation depends on time only through temperature, $R=R(x,y,T(t,x,y)$), an immediate consequence of Theorem~\ref{teoexuniqweak} is the existence of a semidynamical system on $(\bmH_0^1\times L^2)(\Omega)$. In Sections~\ref{QSuniq} and \ref{Strsec} we will show that the solution of MAOOAM can be also considered a semidynamical system on spaces of higher regularity.

We divide the proof of \cref{teoexuniqweak} first showing in \cref{exweak} the existence of solutions and then in \cref{uniq} their uniqueness and continuous dependence on the initial conditions. We close then this section by showing continuous dependence of the solutions with respect to significant parameters of the equations.

\subsection{Existence of Weak Solutions}\label{exweak}
 The approach we use to ensure existence of the Weak Solution is the Galerkin approximation. We will not present the entire argument as it follows standard techniques but we will discuss the more delicate points, namely ensuring that the nonlinear terms are also uniformly bounded in appropriate spaces and the consequent convergence to the original equation. 
 
Let $P_n$ be the projection on the space spanned by the first $n$ eigenfunctions of the operator $A$ and denote 
 \begin{equation}\label{eq:project:var}
     \bm{\psi}_{n} = P_n \bm{\psi}, \quad  T_{i,n} = P_n  T_{i},\quad i = a,o.
 \end{equation}
 Then we apply the projection on the equivalent equations \ref{bb}, \ref{Temp}, \ref{eq:Tb} (recall the unknown are the three streamfunctions and the ocean temperature) and we can show that the energy estimate \ref{enes1} holds also for the variables \eqref{eq:project:var}. In fact the projection commutes with all linear operators and for projected nonlinear terms when taking the $L^2$ product one has for example 
\begin{equation*}
    \left( \psi_{n}^c, P_n J(\psi_n^c,\Delta\psi_n^t)\right)  =  \left( \psi_{n}^c, J(\psi_n^c,\Delta\psi_n^t)\right)
\end{equation*}
giving the same structure we formally treated in \cref{sectenergy}.
 
From the energy inequality \ref{enes1} holding for \eqref{eq:project:var}, it follows that $\bm{\psi}_{n}$ is uniformly bounded in $\bm{L}^{\infty}_t\bm{H}^1_{0,x}$, $T_{a,n}$, $T_{o,n}$ are uniformly bounded in $L^{\infty}_tL^2_x\cap L^5_tL^5_x$, and $\nab T_{a,n}$, $\nab T_{o,n}$ are uniformly bounded in $L^2_t L^2_x$.
Next, one has to ensure that this property extends in appropriate spaces to all the terms in the projected equations. We will focus on the nonlinear terms.

We start by discussing the regularity for the temperature equations \ref{Temp}. 
Using H\"older and Ladyzhenskaya's inequalities one can see that $|T_{a,n}|^3T_{a,n}$ is uniformly bounded in $L^{p}_t L^{q}_x$ for all $p,q \in [1,\infty)$ and using the properties of the operator $P_n$ we can conclude that $P_n|T_{a,n}|^3T_{a,n}$ is uniformly bounded in $L^2_t L^2_x$. The nonlinear transport term $J(\psi_{a,n}^t,T_{a,n})$ can be uniformly bounded in $L^{2}_t L^{2}_x$, and consequently $P_nJ(\psi_{a,n}^t,T_{a,n})$ is bounded uniformly in $L^2_t H^{-1}_x$.
For the short-wave radiation we assumed that $R_{a,n}(T_{a,n})$ is a bounded function with values in $L^2_tL^2_x$.  

Regarding the ocean temperature, as $T_{o,n}$ is uniformly bounded only in $L^{\infty}_t L^2_x\cap L^2_t H^1_x$ we can only get $|T_{o,n}|^3T_{o,n}$ uniformly bounded in $L^{5/4}_tL^{5/4}_x$,
and $P_n|T_{o,n}|^3T_{o,n}$ will be bounded in $L^{5/4}_tH^{-1}_x$. 
To have the same regularity for the short-wave radiation we assumed that $R_{o,n}(T_{o,n})$ is a bounded function in $L^{5/4}_tL^{5/4}_x$.

The nonlinear transport term $J(\psi_{o,n},T_{o,n})$ can be uniformly bounded in $L^{5/4}_t L^{5/4}_x$ (consequently $P_nJ(\psi_{o,n},T_{o,n})$ is bounded uniformly in $L^{5/4}_t H^{-1}_x$) as a consequence of $ \nab\psi_{o,n} \in L^{\infty}_t L^2_x\cap L^2_t H^1_x\subset L^{10/3}_t L^{10/3}_x$ where the inclusion is obtained by $L^p_tL^q_x$-interpolation. Using these properties we deduce that $\partial_t T_{o,n}$ is uniformly bounded in $L^{5/4}_t H^{-1}_x$ and converges also strongly in $L_t^2 L_x^2$ by Aubin-Lions lemma.

Now let us discuss the regularity for the equations \ref{bb} for the barotropic and baroclinic streamfunctions. In the equation for $\psi_n^c$ the less regular terms are $\Delta^2\psi_{n}^c$ and $P_n|T_{o,n}|^3T_{o,n}$. The first is uniformly bounded in $L^2_tD(A)^*_x$ and the second is uniformly bounded in $L^{5/4}_tH^{-1}_x$.
Next, in order to estimate the Jacobians we use the following inequalities:
\begin{align}
    &|(J(u,v),\Delta u)| \le C|\Delta v||\nab u||\Delta u|, &&\quad u,v\in H^2\cap H_0^1,\label{estJ1}\\
    &|(J(u,v),w)| \le C|\nab u||\nab v|^{1/2}|\Delta v|^{1/2}|w|^{1/2}|\nab w|^{1/2}, &&\quad u\in H^1,\ v,w\in H^2\cap H_0^1 \label{estJ2}.
\end{align}
%
These have been shown for example in \cite[Lemma 3.1]{Chueshov} in the periodic case, and they also hold with the boundary conditions used here. In particular to show \eqref{estJ1} we need the following relation, which is a consequence of an integration by parts together with the divergence theorem:
\begin{align*}
    (J(u,v),\Delta u)&=-\int_{\Omega}\frac{\partial^2 v}{\partial x\partial y}\bl\{\bl(\pdv{u}{x}\bl)^2-\bl(\pdv{u}{y}\bl)^2\br\}d\tilde{A}+\int_{\Omega} \pdv{u}{x}\pdv{u}{y}\bl\{\frac{\partial^2 v}{\partial y^2}-\frac{\partial^2 v}{\partial x^2}\br\}d\tilde{A}\\
    &\quad +\frac{1}{2}\int_{\partial\Omega} (\nab^{\perp} v\cdot \nab u)\nab u\cdot \hat{n}\ dl- \frac{1}{2}\int_{\partial\Omega} (\nab v\cdot \nab u)\nab^{\perp} u\cdot \hat{n}\ dl,
\end{align*} 
where the boundary terms vanish due to the no-flux condition.

Using inequalities \eqref{estJ1} and \eqref{estJ2} we see that all the terms of the form $P_nJ(\psi_n,\Delta\psi_n)$ are uniformly bounded in $L^2_tD(A)^*$.

Therefore, for a generic constant $ K >0$ we obtain that $\partial_t  (\Delta-K)\psi_n^c$ is uniformly bounded in $L^{5/4}_tD(A)^*_x$, being a subset of both $L^{5/4}_tH^{-1}_x$ and $L^{2}_tD(A)^*_x$, and by Aubin-Lions Lemma we get that $\psi^c_n$ converges strongly in $L^2_t H^1_{0,x}$. 
The other streamfunctions can be treated in the same way and we get that $\partial_t  (\Delta-K)\psi_{o,n}$ and $\partial_t \Delta \psi_n^t$ are uniformly bounded in $L^{2}_tD(A)^*_x$.

Now let us briefly discuss the convergence of the various terms in the equations for MAOOAM \ref{bb} and \ref{Temp}.
Starting with the nonlinear terms representing the infrared radiation we have to show that $P_n |T_{n, i}|^3T_{n,i} \rightharpoonup |T_i|^3T_i$, $i = a,o$. It is sufficient (see e.g. Lemma~8.3 in \cite{Robinson}) to know that $T_{o,n}$ is bounded in $L^{5/4}_tL^{5/4}_x$ and $T_{a,n}$ is bounded in $L^{2}_tL^{2}_x$ and that $|T_{n, i}|^3T_{n,i}$ converges to $ |T_i|^3T_i$, $i= a,o$, almost everywhere in~$[0, t_*]\times\Omega$.
This fact is a consequence of the more general property that, taken a generic function $f$ and a sequence $u_n$, if $f(u_n)$ is uniformly bounded in $L^q_tH^{-1}_x$ and $f(u_n)\rightharpoonup \chi$ in $L^q_tH^{-1}_x$ for some $1<q<\infty$ then $P_nf(u_n)\rightharpoonup \chi$ in the same space. 

For the short-wave radiation terms $R_a(T_a)$ and $R_o(T_o)$ we know they are continuous with values in $L^2_tL^2_x$ and $L^{5/4}_tL^{5/4}_x$, and for the Jacobian $J(\psi_{o,n},T_{o,n})$ it can be shown to converge in $C([0, t_*];D(A))^*$ using standard techniques.

Next, although $\pdv{T_{o,n}}{t}$ converges to $\pdv{T_o}{t}$ only in $L^{5/4}_t H^{-1}_x$, we observe that the limiting function is actually more regular, namely $\pdv{T_o}{t}\in L^{5/4}_tL^{5/4}_x+L^2_t H^{-1}_x$, because we proved that $|T_{o}|^3T_{o}$, $R_o(T_{o})$ and $ J(\psi_o,T_o)$ are in $ L^{5/4}_t L^{5/4}_x$ and $\Delta T_o\in L^2_tH^{-1}_x$.
Together with the regularity of $T_o\in L^5_tL^5_x\cap L^2_tH^1_x$ this implies that $T_o\in C([0, t_*]; L^2(\Omega))$ (it can be showed applying the result presented for example in \cite[Exercise~8.2]{Robinson}).

Moving to the streamfunctions equations, using standard techniques one can also show that $J(\psi_{a,n}^t, T_{a,n})$ converges weakly in $L^2_tD(A)^*$ and $J(\psi_n,\Delta\psi_n)$ converges weakly in $C([0, t_*],D(A))^*$. Moreover we have 
$$\pdv{\Delta\psi^t}{t},\ \pdv{\Delta\psi_o}{t}\in L^2_tD(A)^*_x \quad \text{and} \quad \pdv{\Delta\psi^c}{t}\in L^{5/4}_tL^{5/4}_x+L^2_tD(A)^*_x.$$

Using a variant of Theorem~7.2 (see \cite{mythesis}, Proposition 3.1) combined with Exercise~8.2, both in \cite{Robinson}, one can show that $\bm{\psi}\in C([0, t_*]; \bm{H}_0^1)$.

Last, one can prove that the limiting solution fulfils the initial condition proceeding as e.g. in \cite[Section 7.4.4]{Robinson}, and that the energy inequality holds for the limiting function following for example \cite[Theorem 4.6]{RobinsonNS} (again we refer to \cite{mythesis}, Proposition 3.2, for the details about the proof of the latter result). Thus we have shown the existence of a \emph{Leray-Hopf} weak solution for MAOOAM.

\subsection{Uniqueness of Weak Solutions}\label{uniq}
Let us start by considering the difference between two weak solutions $\delta\psi=\tilde{\psi}-\hat{\psi}$ and $\delta T=\tilde{T}-\hat{T}$. We know that the equations for $\delta\psi_a^t$ and $\delta\psi_o$ hold in $L^2_tD(A)^*_x$, while the one for $\delta\psi_a^c$ is defined in $L^{5/4}_tL^{5/4}_x+L^2_tD(A)^*_x$. Recalling  that
$\delta \psi_a^c \in  L^5_tL^5_x\cap L^2_tD(A)_x$ and $\delta\psi_a^t$ and $\delta\psi_o$ in $L^2_tD(A)_x$ 
we can apply Proposition 3.1 in \cite{mythesis} and hence, taking the scalar product of these equations with $-\bm{\delta\psi} = -\left( \delta\psi_a^t, \delta \psi_a^c, \delta\psi_o\right)$ in $\bmL^2$ with weight $(1,1,\kappa)$ (we recall that $\kappa=\frac{\rho h k_d}{2C}$),
we get the following expression:
\begin{multline}
\frac{1}{2}\pdv{}{t}\bl(\Vert\bm{\delta\psi}\Vert_{\bm{H}^1}^2+\kappa\frac{|\delta\psi_o|^2}{L^2_R}\br)-(J(\hat{\psi_a^t},\delta\psi_a^t),\Delta\delta\psi_a^t)-(J(\hat{\psi_a^c},\delta\psi_a^c),\Delta\delta\psi_a^t)-(J(\hat{\psi_a^c},\delta\psi_a^t),\Delta\delta\psi_a^c) \notag\\
-(J(\hat{\psi_a^t},\delta\psi_a^c),\Delta\delta\psi_a^c)-\kappa (J(\hat{\psi_o},\delta\psi_o),\Delta\delta\psi_o)+\frac{k_d}{2}|\nab (\delta\psi_a^t+\delta\psi_a^c-\delta\psi_o)|^2 \label{eq:unique:1}\\
+\nu_S\Vert\bm{\delta\psi}\Vert_{\bm{H}^2}^2
+2k'_d|\nab \delta\psi_a^c|^2+r\kappa|\nab\delta\psi_o|^2=\frac{f_0}{p_{\delta}}(\tilde{\omega}-\hat{\omega},\delta\psi_a^c),\notag
\end{multline}
where we have used 
$(J(\delta\psi^c_a,\tilde{\psi}^c),\Delta\delta\psi^t)=-(J(\delta\psi^t_a,\tilde{\psi}^c),\Delta\delta\psi^c)$ and $J(\delta\psi,\delta\psi)=0$; 
the remaining terms in $J$ can be estimated using \eqref{estJ1}, \eqref{estJ2} and Young's inequality by
\[ 
    \frac{\nu_S}{2}\Vert\bm{\delta\psi}\Vert_{\bm{H}^2}^2+C_1\Vert\bm{\hat{\psi}}\Vert^2_{\bmH^2}\Vert\bm{\delta\psi}\Vert_{\bm{H}^1}^2.
\]
It remains to find an expression for $(\tilde{\omega}-\hat{\omega},\delta\psi_a^c)$. 

We start observing that, thanks to the regularity of the equation for $\psi^c$, $\tilde{\omega},\hat{\omega}\in L^{5/4}_t L^{5/4}_x+ L^2_tD(A)^*_x$, and since $\delta\psi_a^c\in L^5_tL^5_x\cap L^2_tD(A)_x$ the scalar product is well defined. Now recalling that $\delta T_a=\frac{2f_0}{R}\delta\psi_a^c$ by \ref{eq:Tb}, we can use this relation to get another expression for $(\tilde{\omega}-\hat{\omega},\delta\psi_a^c)$ and eventually eliminate this term from the two equations. 

To close the system we also have to take into account the equation for $\delta T_o$, which is well defined in $L^{5/4}_t L^{5/4}_x+ L^2_tH^{-1}_x$ and thus we can take the inner product with $\delta T_o\in L^5_tL^5_x\cap L^2_t H_{0,x}^1$. We observe that $\delta T_a$ and $\delta T_o$ both have Dirichlet boundary conditions because $\tilde{T}^{(0)}=\hat{T}^{(0)}$. 
Then we find (using the notation $|\cdot|_{\bbR}$ to denote the Euclidean norm in order to avoid ambiguity with the $L^2$ norm $|\cdot|$):
\begin{multline}\label{eq:unique:2}
    \frac{f_0}{p_{\delta}}(\tilde{\omega}-\hat{\omega},\delta\psi_a^c)=\mu\bl(-\frac{1}{2}\pdv{}{t}\{\gamma_a |\delta T_a|^2+\gamma_o|\delta T_o|^2\}-\tilde{\nu}_T(|\nab \delta T_a|^2+|\nab \delta T_o|^2)\\
    -\gamma_a(J(\delta\psi_a^t,\tilde{T}_a),\delta T_a)-\gamma_o(J(\delta\psi_o,\tilde{T}_o),\delta T_o)-\lambda|\delta T_a-\delta T_o|^2+\epsilon_a\sigma_B(|\tilde{T}_o|_{\bbR}^3\tilde{T}_o\\
    -|\hat{T}_o|_{\bbR}^3\hat{T}_o,\delta T_a)-2\epsilon_a\sigma_B(|\tilde{T}_a|^3_{\bbR}\tilde{T}_a-|\hat{T}_a|^3_{\bbR}\hat{T}_a,\delta T_a)-\sigma_B(|\tilde{T}_o|^3_{\bbR}\tilde{T}_o-|\hat{T}_o|^3_{\bbR}\hat{T}_o,\delta T_o)\\
    +\epsilon_a\sigma_B(|\tilde{T}_a|^3_{\bbR}\tilde{T}_a-|\hat{T}_a|^3_{\bbR}\hat{T}_a,\delta T_o)+(R_a(\tilde{T}_a)-R_a(\hat{T}_a),\delta T_a)+(R_o(\tilde{T}_o)-R_o(\hat{T}_o),\delta T_o) \br).
\end{multline}
As $(|\tilde{T}_a|_{\bbR}^3\tilde{T}_a-|\hat{T}_a|^3_{\bbR}\hat{T}_a,\delta T_a)\ge 0$, $(|\tilde{T}_o|^3_{\bbR}\tilde{T}_o-|\hat{T}_o|^3_{\bbR}\hat{T}_o,\delta T_o)\ge 0$ these terms can be handled with ease. 
In order to control $(|\tilde{T}_o|^3_{\bbR}\tilde{T}_o-|\hat{T}_o|_{\bbR}^3\hat{T}_o,\delta T_a)$ we will use the $L^p_tL^q_x$-interpolation.
\begin{rem}
    The estimate for the mixed quartic term $(|\tilde{T}_o|_{\bbR}^3\tilde{T}_o-|\hat{T}_o|_{\bbR}^3\hat{T}_o,\delta T_a)$ is not straightforward, because we cannot apply the standard techniques for parabolic reaction-diffusion equations, see e.g. \cite[Chapter 8]{Robinson}, \cite[Section 1.1.4]{Temam}, or \cite[Section 11.B]{Smoller}.
    The main idea is that we shift some of the regularity on $\delta T_a$ exploiting the higher regularity of $\delta T_o$.
\end{rem}

We can rewrite this terms as follows:
\begin{align}
\label{sviluppo}
    (|\tilde{T}_o|^3_{\bbR}\tilde{T}_o-|\hat{T}_o|^3_{\bbR}\hat{T}_o,\delta T_a)&=(|\hat{T}_o|^3_{\bbR}\delta T_o+(|\tilde{T}_o|^3_{\bbR}-|\hat{T}_o|^3_{\bbR})\tilde{T}_o,\delta T_a)\notag\\
    &\le(|\hat{T}_o|^3_{\bbR}\delta T_a,\delta T_o)+2((\tilde{T}_o^2+\hat{T}_o^2)\tilde{T}_o\delta T_a,|\delta T_o|_{\bbR}).
\end{align}
The two terms in the right-hand side of \eqref{sviluppo} have the same degree of nonlinearity, so we will only present the computations in details for the first term. Choosing \(p'\) and \(q'\) such that \(\frac{1}{p'}+\frac{1}{q'}=\frac{1}{2}\) we get
\begin{align}
    \label{Tpq}
    (|\hat{T}_o|_{\bbR}^3\delta T_a,\delta T_o)\le |\hat{T}_o|_{L^{3p'}}^3|\delta T_a|_{L^{q'}}|\delta T_o|.
\end{align}
Choosing for example \(3p'=8\) we get \(\frac{1}{q'}=\frac{1}{2}-\frac{3}{8}=\frac{1}{8}\) and we can write \eqref{Tpq} as follows:
\begin{align}
    \label{stimaTo}
    |\hat{T}_o|_{L^8}^3|\delta T_a|_{L^8}|\delta T_o|\le C_2(|\hat{T}_o|_{L^8}^3|\nab\delta T_a|^2+|\hat{T}_o|_{L^8}^3|\delta T_o|^2)
\end{align}
where we have used the embedding of $H^1$ in $L^8$ for $\delta T_a$ and Young's inequality. 
Now the right-hand side of  \eqref{stimaTo} is well defined because $\delta T_a\in L^{\infty}_t H^1_x$, $\delta T_o\in L^{\infty}_tL^2_x$ and we know from $L^p_tL^q_x$ interpolation that $\hat{T}_o\in L^3_t L^8_x$.

Looking back at \eqref{eq:unique:2} we now want to estimate \((|\tilde{T}_a|_{\bbR}^3\tilde{T}_a-|\hat{T}_a|^3_{\bbR}\hat{T}_a,\delta T_o)\). We can use a similar strategy, and even easier here because \(\delta T_a,\ \hat{T}_a,\ \tilde{T}_a\) are elements of \(L_t^{\infty}H^1_x\). In fact we have
\begin{align}
    \label{rrrr}
    (|\hat{T}_a|_{\bbR}^3\delta T_a,\delta T_o)\le |\hat{T}_a|_{L^8}^3|\delta T_a|_{L^8}|\delta T_o|\le C_3(|\nab\hat{T}_a|^6|\nab\delta T_a|^2+|\delta T_o|^2),
\end{align}
using again the embedding of $H^1$ in $L^8$ for $\delta T_a,\ \hat{T}_a$ and Young's inequality. 
The other terms in $(|\tilde{T}_a|_{\bbR}^3\tilde{T}_a-|\hat{T}_a|_{\bbR}^3\hat{T}_a,\delta T_o)$ in \eqref{eq:unique:2} will be estimated in the same way.

Next, the estimates for the short-wave radiation terms in \eqref{eq:unique:2} are directly given by the assumption \eqref{xjgpw}.

Finally, we estimate the Jacobian terms with \eqref{estJ2} to arrive at the following inequality:
\begin{align*}
    &\pdv{}{t}\biggl( \Vert\bm{\delta\psi}\Vert_{\bm{H}^1}^2+\kappa\frac{|\delta\psi_o|^2}{L^2_R}+\mu(\gamma_a|\delta T_a|^2+ \gamma_o|\delta T_o|^2)\biggr)\le C\bl(\Vert\bm{\hat{\psi}}\Vert^2_{H^2}+\Vert\hat{T}_a\Vert^6_{H^1}+\Vert\tilde{T}_a\Vert^6_{H^1}\\
    &+|\hat{T}_o|_{L^8}^3+|\tilde{T}_o|_{L^8}^3+\Vert\tilde{T}_o\Vert^2_{H^1}+\Vert\tilde{T}_a\Vert^2_{H^2}+\mathcal{L}_R(t)\br)\bl(\Vert\bm{\delta\psi}\Vert_{\bm{H}^1}^2+\kappa\frac{|\delta\psi_o|^2}{L^2_R}+\mu(\gamma_a|\delta T_a|^2+ \gamma_o|\delta T_o|^2)\br).
\end{align*}
To have a more compact formula we can define the norm \begin{align}
\label{weaknorm}
\Vert (\bm\psi, T_o)\Vert_{\bmH_0^1\times L^2}^2:=\Vert \bm{\psi}\Vert_{\bmH^1}^2+\kappa\frac{|\psi_o|^2}{L^2_R}+\mu(\gamma_a|T_a-T_a^{(0)}|^2+ \gamma_o|T_o-T_o^{(0)}|^2),
\end{align} and we denote the integrable in time prefactor of $\Vert (\bm{\delta\psi}, \delta T_o)\Vert_{\bmH_0^1\times L^2}^2$ by $B_{\bbW}$.
Applying Gronwall's Lemma we find 
\[ 
    \Vert (\bm{\delta\psi}, \delta T_o)(t)\Vert^2_{\bmH_0^1\times L^2}\le e^{\int_0^t B_{\bbW}(s)ds}\Vert (\bm{\delta\psi}, \delta T_o)(0)\Vert^2_{\bmH_0^1\times L^2},
\]
ensuring continuity with respect to the initial conditions and uniqueness of the weak solution as desired.

\subsection{Continuity with respect to radiation parameters}
\label{contradparam}
In this subsection we show that the weak solution depends continuously on all the parameters which determine the intensity of the radiation in our model: the coefficients $\epsilon_a$, $\lambda$ and the short-wave radiation functions $R_a$ and $R_o$. The value of the emissivity coefficient $\epsilon_a$ reflects the level of greenhouse gases in the atmosphere.
In our model, a change on the emissivity of the atmosphere affects directly the behaviour of $T_a$ and $T_o$ and, as a consequence of the coupling, also of the other streamfunctions $\psi^t$ and $\psi_o$. So it can be interesting to understand in which sense our physical variables depend on $\epsilon_a$: we show that a small variation on the emissivity produces a small variation on the behaviour of the solution of MAOOAM. This will be true also for $\lambda\in \bbR_+$, which represents the heat transfer coefficient at the ocean–atmosphere interface and describes the effect of all non-radiative vertical exchanges of energy between the geophysical fluids, and for the functions
$R_i(T_i)$, $i=a,o$, describing the effect of short-wave radiation, in the norm defined as 
\begin{align}
        \label{normaRsup}
        \Vert R_i\Vert=\sup_T\frac{|R_i(T)|_{L^2}}{1+|T|_{L^4}}, \quad i=a,o.
    \end{align}  
We can define the vector $\bm{R}(\bm{T}):=(R_a(T_a),R_o(T_o))$ and the norm $\Vert \bm{R}\Vert^2=\Vert R_a\Vert^2+\Vert R_o\Vert^2$. We have the following:
\begin{thm}
\label{teoregpar}
    Let $\epsilon_a\in (0,1]$, $\lambda\ge 0$. Consider two radiation functions $\bm{R}(\bm{T})$ and $\tilde{\bm{R}}(\tilde{\bm{T}})$ with $\Vert \bm{R}(s)\Vert$ and $\Vert \tilde{\bm{R}}(s)\Vert$ square integrable in time 
    and such that the following inequality holds:
    \begin{align}
        \label{stimaR}
        (R_i(T_i)-\tilde{R}_i(\tilde{T}_i),\delta T_i)\le \frac{1}{2}\Vert R_i-\tilde{R}_i\Vert^2+c\big(1+|T_i|_{L^4}^2+\mathcal{L}_{R_i}(t)\big)|\delta T_i|^2+\frac{\nu_T}{2}|\nab\delta T_i|^2,
    \end{align}
    where $i=a,o$ and  $\mathcal{L}_{\hat{R}_i}(t)$ is a locally in time integrable function.
    Let $(\bm\psi, T_o)(t,\epsilon_a,\lambda,\bm{R})$ and $(\bm\psi,T_o)(t,\tilde{\epsilon}_a,\tilde{\lambda},\tilde{\bm{R}})$ be two weak solutions of MAOOAM. 
    Then for any $\tilde{\epsilon}_a,\ \tilde{\lambda},\ \tilde{\bm{R}}$ there exists a positive $C$ (which depends continuously on $ t,\epsilon_a,\lambda$) such that \begin{align*}
    \Vert(\bm\psi, T_o)&(t,\epsilon_a,\lambda,\bm{R})-(\bm\psi,T_o)(t,\tilde{\epsilon}_a,\tilde{\lambda},\bm{\tilde{R}})\Vert^2_{\bmH_0^1\times L^2} \\
    &\le C\bl( (\epsilon_a-\tilde{\epsilon}_a)^2+(\lambda-\tilde{\lambda})^2+\int_0^t\Vert \bm{R}(s)-\bm{\tilde{R}}(s)\Vert^2ds\br),\quad \text{for all }\ t\in [0, t_*],
    \end{align*}
    that is, on $[0, t_*]$ the solution of MAOOAM depends in a Lipschitz continuous way on $\epsilon_a,\ \lambda$ and $\bm{R}$. 
\end{thm}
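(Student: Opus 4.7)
The argument parallels the uniqueness proof of \cref{uniq} applied to the difference of the two solutions, now carrying the parameter discrepancies as extra source terms. Denote the differences by $\bm{\delta\psi}$, $\delta T_a$, $\delta T_o$, with $\delta T_a=-\tfrac{2pf_0}{R_*p_{\delta}}\delta\psi_a^c$ by \ref{eq:Tb}, and set $E(t):=\Vert(\bm{\delta\psi},\delta T_o)(t)\Vert^2_{\bmH_0^1\times L^2}$; since the two solutions share the same initial datum, $E(0)=0$. Subtracting the two MAOOAM systems and using the elementary identities
\[
\epsilon_a A-\tilde\epsilon_a\tilde A=\epsilon_a(A-\tilde A)+(\epsilon_a-\tilde\epsilon_a)\tilde A,\qquad \lambda B-\tilde\lambda\tilde B=\lambda(B-\tilde B)+(\lambda-\tilde\lambda)\tilde B,
\]
on the infrared and coupling coefficients splits the difference system into the one already treated in \cref{uniq} (with $\epsilon_a,\lambda$ in place of $\tilde\epsilon_a,\tilde\lambda$) plus three families of new source terms: the short-wave differences $(R_i-\tilde R_i,\delta T_i)$, the coupling term $(\lambda-\tilde\lambda)(\tilde T_a-\tilde T_o,\delta T_j)$, and the quartic parameter terms $(\epsilon_a-\tilde\epsilon_a)\sigma_B(|\tilde T_i|^3\tilde T_i,\delta T_j)$ for $i,j\in\{a,o\}$.

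Mimicking the inner-product structure of \cref{uniq} (streamfunction equations against $-\bm{\delta\psi}$ weighted by $(1,1,\kappa)$; temperature equations against $\mu\gamma_a\delta T_a$ and $\mu\gamma_o\delta T_o$; $\omega$ eliminated via \ref{eq:Tb}), the "old" contributions reproduce a $F(t)E(t)$ bound with $F\in L^1_{\mathrm{loc}}$, since the Jacobian estimates \eqref{estJ1}--\eqref{estJ2}, the monotone quartic pairs, and the delicate mixed quartic handled through \eqref{sviluppo}--\eqref{stimaTo} all apply verbatim. What remains is to absorb the new forcings. The short-wave piece is covered by \eqref{stimaR}: the $\tfrac{\nu_T}{2}|\nab\delta T_i|^2$ summand is absorbed by the parabolic dissipation on the LHS, the $\Vert R_i-\tilde R_i\Vert^2$ summand contributes the required $\int_0^t\Vert\bm R-\tilde{\bm R}\Vert^2 ds$ forcing, and the prefactor $c(1+|T_i|^2_{L^4}+\mathcal{L}_{R_i})$ joins $F(t)$ (noting $|T_a|^2_{L^4}\in L^\infty_t$ by Sobolev and $|T_o|^2_{L^4}\in L^1_t$ by Gagliardo--Nirenberg combined with \ref{enes1}). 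The $(\lambda-\tilde\lambda)$-term is dispatched by Young: $|(\lambda-\tilde\lambda)(\tilde T_a-\tilde T_o,\delta T_j)|\le\tfrac12(\lambda-\tilde\lambda)^2|\tilde T_a-\tilde T_o|^2+C|\delta T_j|^2$, with $|\tilde T_a-\tilde T_o|^2\in L^1_t$ by the $\lambda\int|T_a-T_o|^2$ term in \ref{enes1} and $|\delta T_j|^2\le CE(t)$.

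The central and only substantively new estimate is the ocean parameter quartic $(\epsilon_a-\tilde\epsilon_a)\sigma_B(|\tilde T_o|^3\tilde T_o,\delta T_a)$, critical because $\tilde T_o$ enjoys only $L^5_tL^5_x$ integrability. H\"older gives the factor $|\tilde T_o|_{L^5}^4\,|\delta T_a|_{L^5}$; Sobolev--Poincar\'e together with $\delta T_a\propto\delta\psi_a^c$ yields $|\delta T_a|_{L^5}\le CE^{1/2}$; and the decisive splitting
\[
|\epsilon_a-\tilde\epsilon_a|\,|\tilde T_o|_{L^5}^4\,E^{1/2}=\bl(|\epsilon_a-\tilde\epsilon_a|\,|\tilde T_o|_{L^5}^{5/2}\br)\bl(|\tilde T_o|_{L^5}^{3/2}E^{1/2}\br)\le\tfrac12(\epsilon_a-\tilde\epsilon_a)^2|\tilde T_o|_{L^5}^5+\tfrac12|\tilde T_o|_{L^5}^3E(t)
\]
resolves the difficulty: the first summand integrates to a bounded multiple of $(\epsilon_a-\tilde\epsilon_a)^2$ since $|\tilde T_o|_{L^5}^5\in L^1([0,t_*])$ by \ref{enes1}, and the second joins $F(t)$ since $|\tilde T_o|_{L^5}^3\in L^{5/3}([0,t_*])\subset L^1([0,t_*])$ by Jensen's inequality. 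The companion quartics involving $\tilde T_a$ are strictly easier because $\tilde T_a\in L^\infty_t H^1\hookrightarrow L^\infty_t L^p$ for every finite $p$.

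Assembling the pieces one obtains the ordinary differential inequality $\tfrac{d}{dt}E+(\text{dissipation})\le F(t)E(t)+G(t)$ with $F\in L^1_{\mathrm{loc}}$ and $\int_0^t G(s)ds\le C\bl((\epsilon_a-\tilde\epsilon_a)^2+(\lambda-\tilde\lambda)^2+\int_0^t\Vert\bm R-\tilde{\bm R}\Vert^2 ds\br)$, and Gronwall's lemma applied with $E(0)=0$ closes the argument. The only genuine obstacle beyond the uniqueness proof is the Young splitting for the ocean $\epsilon_a$-quartic displayed above: this is the one step that truly exploits the $L^5_tL^5_x$ integrability of $\tilde T_o$ supplied by the energy inequality, and it fails for a coefficient like $\epsilon_o$ multiplying the same ocean quartic, which is why the analogous Lipschitz statement is not claimed for the ocean absorptivity.
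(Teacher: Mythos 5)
Your proposal is correct and follows essentially the same route as the paper: reduce to the uniqueness argument of Section~\ref{uniq}, split the parameter discrepancies off as source terms, and control the critical ocean quartic $(\epsilon_a-\tilde\epsilon_a)\sigma_B(|\tilde T_o|^3\tilde T_o,\delta T_a)$ by H\"older in $L^{5/4}$--$L^5$, the embedding $H^1\subset L^5$ for $\delta T_a$, and a Young splitting that exploits $\tilde T_o\in L^5_tL^5_x$ from \ref{enes1}. Your distribution of powers ($|\tilde T_o|_{L^5}^5$ against $(\epsilon_a-\tilde\epsilon_a)^2$ and $|\tilde T_o|_{L^5}^3$ in the Gronwall prefactor) differs only cosmetically from the paper's (which keeps $|\hat T_o|_{L^5}^4$ in both summands), and your closing observation about why the argument fails for the ocean emissivity matches the paper's remark.
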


\begin{proof}
We start with the emissivity coefficient $\epsilon_a$. We can proceed as we did for the uniqueness, with the only difference that now we consider two different coefficients \(\tilde{\epsilon}_a\) (emissivity related to \(\tilde{T}_a\)) and \(\hat{\epsilon}_a\) (emissivity related to \(\hat{T}_a\)). Therefore in the equation for \(\delta T_a=\tilde{T}_a-\hat{T}_a\) the only new terms to estimate are
\begin{align*}
    -2\tilde{\epsilon}_a\sigma_B|\tilde{T}_a|^3_{\bbR}\tilde{T}_a 
    + 2\hat{\epsilon}_a\sigma_B|\hat{T}_a|^3_{\bbR}\hat{T}_a
    + \tilde{\epsilon}_a\sigma_B|\tilde{T}_o|^3_{\bbR}\tilde{T}_o
    -\hat{\epsilon}_a\sigma_B|\hat{T}_o|^3_{\bbR}\hat{T}_o.
\end{align*}
Multiplying with \(\delta T_a\) we can rewrite the first two terms of the above equation as 
\[
-2\tilde{\epsilon}_a\sigma_B(|\tilde{T}_a|^3_{\bbR}\tilde{T}_a-|\hat{T}_a|^3_{\bbR}\hat{T}_a, \delta T_a)-2(\tilde{\epsilon}_a-\hat{\epsilon}_a)\sigma_B(|\hat{T}_a|^3_{\bbR}\hat{T}_a,\delta T_a).
\]
The first one is clearly negative, so again we can neglect it, while the second can be estimated easily (for example) as follows:
\[
2(\tilde{\epsilon}_a-\hat{\epsilon}_a)\sigma_B(|\hat{T}_a|^3_{\bbR}\hat{T}_a,\delta T_a)\le (\tilde{\epsilon}_a-\hat{\epsilon}_a)^2+\sigma_B^2|\hat{T}_a|_{L^8}^8|\delta T_a|^2
\]
where we used Cauchy-Schwarz and Young's inequalities.
The third and the fourth term have to be treated with extra care because \(\delta T_o\) has less regularity. We rewrite them as
\begin{equation}\label{eq:weak:cont1}
        \tilde{\epsilon}_a\sigma_B(|\tilde{T}_o|^3_{\bbR}\tilde{T}_o-|\hat{T}_o|^3_{\bbR}\hat{T}_o, \delta T_a)+(\tilde{\epsilon}_a-\hat{\epsilon}_a)\sigma_B(|\hat{T}_o|^3_{\bbR}\hat{T}_o,\delta T_a).
\end{equation}
The first term in \eqref{eq:weak:cont1} is the most singular but can be controlled using  \eqref{sviluppo} to \eqref{stimaTo} in the Section~\ref{uniq}, while for the second we use \(\hat{T}_o\in L^5_tL^5_x\) combined with
Hölder's inequality $(5/4,5)$ and the embedding $H^1\subset L^5$ to conclude
\begin{align*}
    (\tilde{\epsilon}_a-\hat{\epsilon}_a)\sigma_B(|\hat{T}_o|^3_{\bbR}\hat{T}_o,\delta T_a)\le \frac{(\tilde{\epsilon}_a-\hat{\epsilon}_a)^2}{2}|\hat{T}_o|^4_{L^5}+\frac{C\sigma_B^2}{2}|\hat{T}_o|^4_{L^5}|\nab \delta T_a|^2,
\end{align*}
where $C$ is a positive constant which comes from the above embedding.

Let us now consider the radiative terms appearing in the equation for \(\delta T_o=\tilde{T}_o-\hat{T}_o\):
\begin{align}
    \label{mnmnmn}
    -\sigma_B(|\tilde{T}_o|^3_{\bbR}\tilde{T}_o-|\hat{T}_o|^3_{\bbR}\hat{T}_o)+\tilde{\epsilon}_a\sigma_B|\tilde{T}_a|^3_{\bbR}\tilde{T}_a-\hat{\epsilon}_a\sigma_B|\hat{T}_a|^3_{\bbR}\hat{T}_a.
\end{align}
When we multiply the above equation with \(\delta T_o\) the first term becomes negative, so again we can neglect it. 
The two remaining terms in \eqref{mnmnmn} multiplied with $\delta T_o$ can be estimated as we did in Section~\ref{uniq} using \eqref{rrrr}.

Summarizing, we find an inequality of the form
\begin{align*}
    \Vert (\bm{\psi},T_o)(t,\tilde{\epsilon}_a)-(\bm{\psi},T_o)(t,\hat{\epsilon}_a)\Vert^2_{\bmH_0^1\times L^2}&\le Ce^{k(t)}\Vert (\bm{\psi},T_o)(0,\tilde{\epsilon}_a)-(\bm{\psi},T_o)(0,\hat{\epsilon}_a)\Vert^2_{\bmH_0^1\times L^2}\\
    &+\frac{(\tilde{\epsilon}_a-\hat{\epsilon}_a)^2}{2}e^{k(t)}\bl(3t+\int_0^t |\hat{T}_o(s)|^4_{L^5}ds \br)
\end{align*}
where \(k(t)\) is a locally integrable function. Then \((\bm{\psi}^{\epsilon_a}(t),T_o^{\epsilon_a}(t))\) is Lipschitz continuous with respect to \(\epsilon_a\) for all \(t>0\) in the $\bmH_0^1\times L^2$ norm.

Using the same strategy it is straightforward to show that the solution is Lipschitz-continuous in the $\bmH_0^1\times L^2$ norm also with respect to the parameter $\lambda$.

It is left to study the continuity of the solution with respect to the short-wave radiation function $R_i(t,x,y,T)$, $i=a,o$. 
Taking the norm defined in \eqref{normaRsup} and the assumption \eqref{stimaR}, following the same strategy again we obtain Lipschitz-continuity.
\end{proof}

\begin{rem}
We want to exhibit two examples of radiation functions which
fullfil all the assertions of Theorem~\ref{teoregpar} and one can give an explicit bound for the norm $\Vert \cdot \Vert $ given by \eqref{normaRsup} so that the right-hand side of the continuity estimate becomes explicit. 

First, we assume that the radiation function is of the form $R(T)=R^{(1)}(t,x,y)+R^{(2)}(t,x,y)T$ with $R^{(1)}\in L^2_tL^2_x$ and $R^{(2)}\in L^2_tL^4_x$, the norm \eqref{normaRsup} can be estimated by \begin{align*}
\Vert R \Vert\le |R^{(1)}|_{L^2}+|R^{(2)}|_{L^4},
\end{align*}
and also the square-integrability $\Vert \bm{R}(s)\Vert$ and $\Vert \tilde{\bm{R}}(s)\Vert$ follows.
Moreover, since radiation functions of this form are also Lipschitz continuous as functions from $L^4$ to $L^2$ with respect to the temperature, \eqref{stimaR} is fulfilled.

Second, we assume that the radiation functions depends on a parameter $\alpha$ with $|\partial_\alpha R(\alpha ,T)|_{L^2}\le c(t)(1+|T|_{L^4})$ and $R$ is Lipschitz from $L^4$ to $L^2$ in $T$. If both $c(t)$ and the Lipschitz constant are locally square integrable, then $\Vert R(\alpha_1,\cdot)-R(\alpha_2, \cdot)\Vert\le c(t) |\alpha_1-\alpha_2|$, and the square integrability of the norms, and inequality \eqref{stimaR} is fulfilled.  

\end{rem}

\begin{rem} We cannot show continuity of the solution with respect to the emissivity of the ocean (which in our model is simply \(\epsilon_o=1\), since the ocean is considered as a black body) in a similar manner, because in this case the analogue of the first term in \eqref{mnmnmn} would be not automatically non-negative and we do not  know how to estimate it, as we do not know whether $T_o$ belong to $L^8_t L^5_x$.
\end{rem}
\section{Solutions with Higher Regularity}
In this section we want to show that assuming higher regularity on the initial datum we can find solutions for MAOOAM with the same regularity of the initial conditions. 

We recall some results about the fractional powers of the operator $A$. Using spectral theory we define the domain $D(A^{\frac{s}{2}})$, $s\in \mathbb{N}$, as
\begin{align}
\label{specdec}
D(A^{\frac{s}{2}})=\bl\{ u=\sum_{j=1}^{\infty} (u,w_j)w_j\ : \quad \sum_{j=1}^{\infty} \lambda^{s}_j |(u,w_j)|^2<\infty\br\},
\end{align}
where $w_j$ and $\lambda_j$ are respectively the eigenfunctions and the eigenvalues of $A$. Since the operator $A$ is positive defined and Poincaré inequality holds, all $\lambda_j$ are positive.
If $\Omega$ is a $C^k$ domain or a rectangle, it can be shown that definition \eqref{specdec} is equivalent for $ s\leq k$ to the following: 
\begin{align}
    \label{defDAalter}
    D(A^{s})=\{u\in H^{2s}(\Omega):\quad \restr{\Delta^ju}{\partial\Omega}=0,\ \text{ for all }\ j\in \bbN_0,\ j< s\}
\end{align}
and for $u\in D(A^{s/2})\subset H^s(\Omega)$ it can be shown that 
\begin{align}
    \label{equiAH}
    |A^{s/2}u|\le \Vert u\Vert_{H^s}\le c_s|A^{s/2}u|
\end{align}
for some constant $c_s$.
The above results are  well known for domains with sufficient smooth boundary; in the Appendix \ref{rectdom} we will discuss the case with $\Omega$ as a rectangle (we recall that the domain we use for the ocean has this shape).

\subsection{Quasi-Strong Solution}\label{QSsec}
In the discussion about the Weak Solution we faced the asymmetry in the regularity of $\bm{\psi}$ and $T_o$. A natural question arise about the existence of a solution in which the ocean temperature $T_o$ has the same regularity of the streamfunction $\bm{\psi}$. This leads us to the definition of a different class of solutions:
\begin{defm}[Quasi-Strong Solution for MAOOAM]
    We say that a function $(\bm{\psi},T_o)$ is a Quasi-Strong Solution for the system \ref{bb}, \ref{Temp}, \ref{eq:Tb} if it is a Weak Solution and in addition $(\bm{\psi},T_o-T_o^{(0)})\in \bbL^{\infty}_t\bbD(A^{\frac{1}{2}})\cap \bbL^2_t\bbD(A)$.
\end{defm}

Note that the estimate \eqref{equiAH} implies that a Quasi-Strong Solution is such that $(\bm{\psi},T_o)\in \bbL^{\infty}_t\bbH^1_x\cap \bbL^2_t\bbH^2_x$. Recall that for the existence of Weak Solutions we have already shown $\bm\psi\in \bmL^{\infty}_t\bmH^1_{0,x}\cap \bmL^2_t\bmH^2_x$, so what remains to be established is the additional regularity for $T_o$ without requiring higher regularity for $\bm\psi$. 

We will require that the short-wave radiations are functions only dependent on the temperature in order to later ensure that the boundary conditions \eqref{Tobal} arising from the higher regularity we are seeking for $T_o$ are fulfilled. In addition the ocean function $R_o$ should satisfy the following estimates:
\begin{assumption}\label{assum:qss}
     $R_o$ is a $C^1$ function such that  
\begin{enumerate}
    \item there exists $c_1, c_2>0$ such that
    \begin{align}
        \left|(\nab R_o(g),\nab g) \right|&\le c_1|g|^2+ c_2|\nab g|^2, \quad g \in L^2_tH^1_x. \label{eq:condgradR1}
    \end{align}
    \item there exists a positive constant $\mathcal{L}_{R_o}$ such that
    \begin{equation}
        \left(R_o(\hg)-R_o(\tg),\Delta(\hg- \tg)\right) \le \mathcal{L}_{R_o}\left|\nab(\hg-\tg)\right|^2 +\frac{\nu_T}{8}\left|\Delta (\hg - \tg)\right|^2 \label{stimaqsR}
    \end{equation}
    for all  $\hg, \tg\in L^2_tD(A)$.
    \end{enumerate} 
\end{assumption}
We can then show existence and uniqueness of quasi-strong solutions and their continuity with respect to initial conditions:

\begin{thm}
    \label{teoexuniQS}
    Let the hypotheses of Theorem~\ref{teoexuniqweak} be fulfilled. Assume 
    \begin{enumerate}
        \item $R_a$ and $R_o$ depend only on the temperature, i.e. $R_i(t,x,y,T(t,x,y)) = R_i(T)$, $i = a,o$;
        \item $R_o$ satisfies the bounds in \cref{assum:qss};
        \item the reference parameters $T_a^{(0)}$, $T_o^{(0)}$ are the stationary and homogeneous solutions of \ref{Temp} with such $R_a$, $R_o$, that is,
        \begin{align}
        \label{stazomT}
    \begin{pmatrix}
    -\lambda -2\epsilon_a\sigma_B |T_a^{(0)}|^3 & \lambda +\epsilon_a\sigma_B |T_o^{(0)}|^3\\
    \lambda +\epsilon_a \sigma_B |T_a^{(0)}|^3 & -\lambda - \sigma_B |T_o^{(0)}|^3
    \end{pmatrix}
    \begin{pmatrix}
    T^{(0)}_a\\
    T^{(0)}_o
    \end{pmatrix}+
    \begin{pmatrix}
    R_a(T_a^{(0)})\\
    R_o(T_o^{(0)})
    \end{pmatrix}
    =\begin{pmatrix}
    0\\
    0
    \end{pmatrix}.
    \end{align}
    \end{enumerate}
    Then, for initial conditions $(\bm{\psi}(0),T_o(0)-T_o^{(0)})\in \bbD(A^{\frac{1}{2}})$, there exists a unique Quasi-Strong Solution \[(\bm{\psi},T_o-T_o^{(0)})\in C([0, t_*],\bbD(A^{\frac{1}{2}}))\cap \bbL^2(0, t_*; \bbD(A))\] for \ref{bb},\ref{Temp}, \ref{eq:Tb} which depends continuously on the initial datum.
\end{thm}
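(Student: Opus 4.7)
The plan is to extend the Galerkin construction that already proved Theorem~\ref{teoexuniqweak} by establishing one additional a priori estimate for $T_o$, namely a bound in $L^\infty_tH^1_{0,x}\cap L^2_tD(A)$, while uniqueness will be inherited directly from the weak theory. Before doing so I would observe that hypothesis (3) on the stationary homogeneous reference temperatures is exactly what is needed for the compatibility of the higher regularity with the domain of $A$: evaluating the $T_o$-equation on $\Gamma_o$, where $T_o=T_o^{(0)}$, $T_a=T_a^{(0)}$, $\psi_o=0$ and $\partial_t T_o=0$, and subtracting the relation \eqref{stazomT}, forces $\Delta T_o|_{\Gamma_o}=0$, so $T_o-T_o^{(0)}$ belongs to $D(A)$ whenever it is regular enough.

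In the Galerkin scheme I would test the projected equation for $T_{o,n}-T_o^{(0)}$ with $-\Delta(T_{o,n}-T_o^{(0)})$; this is admissible since $P_n$ commutes with $A$. The time derivative produces $\tfrac12\tfrac{d}{dt}|\nabla(T_{o,n}-T_o^{(0)})|^2$, the Laplacian gives $\tilde{\nu}_T|\Delta T_{o,n}|^2$, and the outgoing long-wave term $\sigma_B|T_{o,n}|^3 T_{o,n}$ integrated by parts yields a non-negative contribution $4\sigma_B\int|T_{o,n}|^3|\nabla T_{o,n}|^2$. The transport term is estimated via \eqref{estJ1} and Young, giving a prefactor of the form $C|\Delta\psi_{o,n}|^2|\nabla T_{o,n}|^2$ which is integrable in time thanks to the weak-solution bound $\psi_o\in L^2_tD(A)$. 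The short-wave radiation $R_o(T_{o,n})$ is controlled by \eqref{eq:condgradR1} after integration by parts (the boundary term vanishes because $T_o^{(0)}$ is constant on $\Gamma_o$, so $\nabla(T_{o,n}-T_o^{(0)})$ is tangential against a function depending only on the boundary value of $T_{o,n}$). Collecting everything yields a differential inequality
\[
    \frac{d}{dt}|\nabla(T_{o,n}-T_o^{(0)})|^2+\frac{\tilde{\nu}_T}{2}|\Delta T_{o,n}|^2\le B(t)|\nabla(T_{o,n}-T_o^{(0)})|^2+K(t),
\]
with $B,K\in L^1_t$ by the weak energy estimates, so Gronwall produces uniform bounds that allow passage to the limit by the same compactness arguments used in Section~\ref{exweak}.

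For uniqueness I would simply invoke Theorem~\ref{teoexuniqweak}, since any quasi-strong solution is in particular a weak solution and \cref{assum:R:weaksol} is implied by \cref{assum:qss} together with the hypotheses of Theorem~\ref{teoexuniqweak}. To upgrade continuous dependence on initial data from the $\bmH_0^1\times L^2$ norm to the $\bbD(A^{1/2})$ norm, I would repeat the $-\Delta$ test on the difference of two solutions, using \eqref{stimaqsR} for the short-wave part and the already-established a priori bounds on each solution to close the resulting Gronwall-type estimate.

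The main obstacle is controlling the cross-coupling long-wave term $\epsilon_a\sigma_B(|T_{a,n}|^3T_{a,n},-\Delta T_{o,n})$: unlike in Section~\ref{sectenergy}, where the natural test function produced the manifestly non-negative bilinear expression \eqref{relTaTo}, at this level of regularity no analogous cancellation is available. The only way I see to control it is by Cauchy--Schwarz together with the two-dimensional Sobolev embedding $H^1\hookrightarrow L^8$ applied to $T_a$ (which is inherited from $\psi_a^c\in L^\infty_tH^1_x$ via \ref{eq:Tb}), giving $|T_{a,n}|^3T_{a,n}\in L^\infty_tL^2_x$ and allowing absorption of a small multiple of $|\Delta T_{o,n}|^2$. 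A secondary delicate point, already mentioned, is that hypothesis (3) cannot be dropped: without it the Dirichlet trace of $\Delta T_o$ would not vanish and the test with $-\Delta(T_o-T_o^{(0)})$ would introduce uncontrolled boundary contributions.
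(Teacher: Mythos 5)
Your overall strategy is the one the paper follows: test the Galerkin equation for $T_o$ with $-\Delta(T_{o,n}-T_o^{(0)})$, keep the sign of the outgoing long-wave term after integration by parts, use \eqref{estJ1} for the transport term, H\"older/Young with $H^1\hookrightarrow L^8$ for the cross term $(|T_a|^3T_a,\Delta T_o)$, close with Gronwall and Aubin--Lions, inherit uniqueness from the weak theory, and rerun the $-\Delta\delta T_o$ estimate with \eqref{stimaqsR} for continuous dependence in $\bbD(A^{1/2})$. That part is fine.

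The genuine flaw is your treatment of the boundary contributions, which is exactly the point where hypothesis (3) is actually used. First, since $T_{o,n}-T_o^{(0)}$ vanishes on $\Gamma_o$, its gradient there is \emph{normal}, not tangential (only the tangential derivative of a constant trace vanishes); hence the boundary term you dismiss, $\int_{\partial\Omega}R_o(T_o^{(0)})\,\nabla T_{o,n}\cdot\hat n\,dl=R_o(T_o^{(0)})\int_\Omega\Delta T_{o,n}\,d\tilde A$, does not vanish, and neither does the analogous term you silently drop when integrating by parts $\sigma_B|T_{o,n}|^3T_{o,n}$. The paper's mechanism is different: one integrates by parts the \emph{entire} radiative combination $\lambda(T_a-T_o)-\sigma_B|T_o|^3T_o+\epsilon_a\sigma_B|T_a|^3T_a+R_o(T_o)$ at once, whose boundary trace is the constant in \eqref{Tobal}; this constant is zero precisely because $(T_a^{(0)},T_o^{(0)})$ solves \eqref{stazomT}. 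Individually the boundary terms are nonzero and only their sum cancels. (Your estimate is in fact salvageable without the cancellation, since each leftover term is a constant times $\int_\Omega\Delta T_{o,n}\le|\Omega|^{1/2}|\Delta T_{o,n}|$ and can be absorbed by Young's inequality, but that is not the argument you gave.) Second, your opening claim that hypothesis (3) is needed because it forces $\Delta T_o|_{\Gamma_o}=0$ and hence $T_o-T_o^{(0)}\in D(A)$ is off target: $D(A)=H^2\cap H^1_0$ requires only the Dirichlet trace to vanish, and the condition $\restr{\Delta u}{\partial\Omega}=0$ only enters for $D(A^{3/2})$ and higher, cf.\ \eqref{defDAalter}. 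A minor additional point: you assert that \cref{assum:qss} implies \cref{assum:R:weaksol}; it does not (the two conditions test $R_o$ against different quantities), so uniqueness should be justified by noting that the hypotheses of \cref{teoexuniqweak}, which are assumed here, already include \cref{assum:R:weaksol}.
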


Using similar arguments to the ones we described in Section~\ref{contradparam} one can also show that the Quasi-Strong Solution is Lipschitz-continuous with respect to the parameters $\lambda$, $\epsilon_a$ and to the solar radiation forcing $R(T)$. We will omit the details.

As before we outline only the main points of the proof of \cref{teoexuniQS} in the following sections.

\subsubsection{Existence of Quasi-Strong solutions}
We focus on the equation for $T_o$ as it is what differs with respect to the Weak Solution result. 
Considering the Galerkin approximation (we will omit the index $n$) we can take the scalar product in $L^2_x$ of the equation for $T_o$ with $-\Delta T_o$.
For the integration by parts required in the time-derivative term we use that the boundary value $\restr{T_o}{\partial\Omega}=T_o^{(0)}$ does not depend on time and in order to treat the radiation terms we need the following boundary term to vanish
\begin{align*}
    \int_{\partial\Omega} \big(\lambda(T_a-T_o)-\sigma_B|T_o|^3_{\bbR}T_o+\epsilon_a\sigma_B|T_a|^3_{\bbR}T_a+R_o(T_o)\big)\nab T_o\cdot \hat{n}\ dl=0.
\end{align*}
At the boundary the ocean and atmosphere temperatures $T_o$ and $T_a$ take the constant reference values $T_o^{(0)}$ and $T_a^{(0)}$ respectively. Hence in order to satisfy the above equality it is sufficient to assume that
\begin{align}
    \label{Tobal}
    \lambda(T_a^{(0)}-T_o^{(0)})-\sigma_B|T_o^{(0)}|^3_{\bbR}T_o^{(0)}+\epsilon_a\sigma_B|T_a^{(0)}|^3_{\bbR}T_a^{(0)}+R_o(T_o^{(0)})=0 \quad \text{on}\ \partial\Omega.
\end{align}
Since in Theorem \ref{teoexuniQS} we assumed $T_a^{(0)}$ and $T_o^{(0)}$ to be stationary and homogeneous solutions for \ref{Temp}, they are solutions of \eqref{stazomT}, hence in particular satisfy \eqref{Tobal}. Next, we can control the term $(R_o(T_o),\Delta T_o)$ thanks to \eqref{eq:condgradR1}
\begin{align}
    \label{condgradR}
    -(\nab R_o(T_o),\nab T_o)\le c_1|T_o|^2+ c_2|\nab T_o|^2.
\end{align}
The transport term $(J(\psi_o,T_o),\Delta T_o)$ can be estimated using \eqref{estJ1}, while the infrared term $(|T_a|^3_{\bbR}T_a,\Delta T_o)$ can be easily estimated using 
H\"older and Young inequality and $(|T_o|^3_{\bbR}T_o,\Delta T_o)$ gives rise to
\begin{align*}
    -(4|T_o|^3_{\bbR}\nabla T_o,\nabla T_o)\le 0,
\end{align*}
hence we can neglect it.

\begin{rem}
    \label{postermTo}
    The negativity of the long-wave radiation term $(|T_o|^3_{\bbR}T_o,\Delta T_o)$ is essential. Indeed, if we try to estimate $\left|(4|T_o|^3_{\bbR}\nabla T_o,\nabla T_o)\right|$ we would need that $T_o$  in $L^6_tL^6_x$, but we only have, from the energy inequality \ref{enes1}, $L^5_tL^5_x$.
\end{rem}

Summarizing, we obtain the following inequality:
\begin{equation}\label{estTo}
\begin{split}
    \pdv{}{t} |\nabla T_o|^2+\frac{\tilde{\nu}_T}{\gamma_o}|\Delta T_o|^2\le C_1(\Vert \psi_o&\Vert^2_{H^2}+\Vert\psi_a^c\Vert^2_{H^2}+1)|\nabla T_o|^2 \\ &+C_2(\Vert\psi_a^c\Vert^6_{H^1}+\Vert\psi_a^c\Vert^2_{H^1}+|T_o|^2)+C_3,
\end{split}
\end{equation}
with $C_1$, $C_2$ and $C_3$ are positive constants.
Therefore applying Gronwall's inequality to \eqref{estTo} we can conclude that the Galerkin truncated function $T_{o,n}$ is uniformly bounded in $L^{\infty}_tH^1_x$ if $T_o(0)\in H^1_x$.
Integrating in time inequality \eqref{estTo} we find that $T_{o,n}$ is uniformly bounded in $L^2_tH^2_x$ as well.

To get a uniform bound for $\partial_t T_{o,n}$ in $L^2_tL^2_x$, it is sufficient to show that all the other terms in the equation for $T_o$ have this regularity: one can use the embedding of $H^1_x$ in $L^p_x$ to show that $|T_{o, n}|^3T_{o, n}$ is uniformly bounded in $L^{p}_t L^{q}_x$ for all $p,q\in[1,+\infty)$ and as $R_o$ is continuous and bounded in $L^2_tL^2_x$, $R_o(T_{o,n})$ is uniformly bounded in $L^2_tL^2_x$, then all the radiation terms have the desired regularity. 

Finally, the transport terms can be estimated using standard techniques and then we get that $\partial_t T_{o,n}$ is uniformly bounded in $L^2_tL^2_x$. Using Aubin-Lions Lemma and proceeding as we did for $T_a$ in Section~\ref{exweak} one can check that every term in the equation for $T_{o,n}$ converges to the corresponding limiting term.

In conclusion we have shown existence of the Quasi-Strong Solution and using \cite[Corollary 7.3]{Robinson}, we deduce that $T_o-T_o^{(0)}$ is time-continuous with values in  $D(A^{\frac{1}{2}})$.

\subsubsection{Continuous dependence on initial conditions}
\label{QSuniq}
Since a Quasi-Strong Solution is also a Weak Solution, it is unique by the results in Section~\ref{uniq}.
Now we want to show that a Quasi-Strong Solution of MAOOAM depends continuously on the initial datum $(\bm{\psi}(0),T_o(0)-T_o^{(0)})\in \bbD(A^{\frac{1}{2}})$.
The equations for the difference between two streamfunctions $\bm{\delta\psi}$ can be treated exactly in the same way we did for the uniqueness of the weak solution, so we will focus only on the equation for $\delta T_o=\tilde{T}_o-\hat{T}_o$.

We take the scalar product in $L^2_x$ of the equation for $\delta T_a$  with $\delta T_a$ as we did for the Weak Solution in Section~\ref{uniq}, while the equation for $\delta T_o$ is now multiplied in $L^2_x$ with $\Delta \delta T_o$, which is well defined as $\partial_t T_o$ and $\Delta \delta T_o$ are both in $L^2_tL^2_x$.
Let us consider the radiation terms one by one. The quantity $(|\tilde{T}_a|^3_{\bbR}\tilde{T}_a-|\hat{T}_a|^3_{\bbR}\hat{T}_a,\delta T_a)$ is positive, hence we can drop it. Next we can control $(|\tilde{T}_o|^3_{\bbR}\tilde{T}_o-|\hat{T}_o|^3_{\bbR}\hat{T}_o,\delta T_a)$ using H\"older and Poincaré inequalities and recalling that $\int_0^t |T_o(s)|^6_{L^{12}}ds$ is finite. Using integration by parts we can rewrite the next infrared radiation term as  
\begin{align}
    \label{cdfg}
    (|\tilde{T}_o|^3_{\bbR}\tilde{T}_o-|\hat{T}_o|^3_{\bbR}\hat{T}_o,\Delta\delta T_o)
    = -4(|\tilde{T}_o|^3_{\bbR}\nab \delta T_o,\nab\delta T_o)  -8((\tilde{T}_o^2+\hat{T}_o^2)\nab \hat{T}_o \delta T_o,\nab \delta T_o).
\end{align}
It is essential that the first term on the right hand side in \eqref{cdfg} is negative, as we pointed out in Remark~\ref{postermTo},
while the term in the second line can be estimated similarly as the previous infrared term (in this case we will need that $\int_0^t |\nab T_o(s)|^4|\Delta T_o(s)|^2ds$ is finite for all $t$). 

The last infrared radiation term $(|\tilde{T}_a|^3_{\bbR}\tilde{T}_a-|\hat{T}_a|^3_{\bbR}\hat{T}_a,\Delta\delta T_o)$ using again integration by parts can be decomposed in two summands as in \eqref{cdfg}. In this case the first summand has no sign, but we can anyway estimate both using again standard H\"{o}lder inequality with exponents $(4,4,2)$ and Young's inequality.
The part of the equation related to $R_o$ can be estimated thanks to \eqref{stimaqsR} in Assumption~\ref{assum:qss},
and the Jacobian terms related to the transport of $T_o$ can be controlled using H\"older and Ladyzhenskaya's inequalities and relation \eqref{estJ1}.

By the regularity of the Quasi-Strong Solution we have that $\Vert \tilde{T}_o(s)\Vert^2_{L^{\infty}_tH^1_x}$, $\int_0^t \Vert \tilde{T}_o(s)\Vert^2_{H^2}ds$ and $\int_0^t\Vert\hat{\psi}_o(s)\Vert^2_{H^2}ds$ are finite for all $t\in [0, t_*]$,
and then we get the estimate 
\begin{align*}
\pdv{}{t}\biggl( \Vert\bm{\delta\psi}\Vert_{\bm{H}^1}^2+&\kappa\frac{|\delta\psi_o|^2}{L^2_R}+\mu(\gamma_a|\delta T_a|^2+\gamma_o|\nab\delta T_o|^2)\biggr)\\
&\le B_{\bbQS}(t)\bl(\Vert\bm{\delta\psi}\Vert_{\bm{H}^1}^2+\kappa\frac{|\delta\psi_o|^2}{L^2_R}+\mu(\gamma_a|\delta T_a|^2+ \gamma_o|\nab\delta T_o|^2)\br)
\end{align*}
with an integrable function in time $B_{\bbQS}(t)$ collecting all the prefactors.

\subsection{Strong Solution}
\label{Strsec}
 For the Quasi-Strong Solution we asked for higher regularity on $T_o$. If we restore the asymmetry requiring higher regularity of $\bm{\psi}$ this leads to the following kind of solution:
\begin{defm}[Strong Solution for MAOOAM]
We say that a function $(\bm{\psi},T_o)$ is a Strong Solution for \ref{bb}, \ref{Temp}, \ref{eq:Tb} if it is a Weak Solution and $\bm{\psi}\in \bmL^{\infty}_t\bmD(A)\cap \bmL^2_t\bmD(A^{\frac{3}{2}}), \ T_o-T_o^{(0)}\in L^{\infty}_tD(A^{\frac{1}{2}})\cap L^2_tD(A)$.
\end{defm}

We have already treated the equation for $T_o$ for this kind of regularity in the previous subsection. Therefore it remains to find higher regularity estimates on the equations for the streamfunctions.
As usual we use the Galerkin approximation (omitting the index $n$) and we multiply the equations for $\bm{\psi}$ with $\Delta\bm{\psi}$ in $\bm{L}^2$ with weight $(1,1,\kappa)$. We also define $\Vert \bm{\psi}\Vert_{\bmH^3}^2:=|A^{\frac{3}{2}} \psi_a^t|^2+|A^{\frac{3}{2}} \psi_a^c|^2+\kappa|A^{\frac{3}{2}} \psi_o|^2$.

We observe that all the nonlinear terms related to the Jacobian operator vanish because $(J(\psi_i,\Delta\psi_j),\Delta\psi_k)=0$ when $j=k$ using the fundamental property \eqref{Jpropfond} and the fact that $\psi_i=0$ on the boundary, and when $j\neq k$ the various terms cancel each other. Recalling that $-(\omega,\Delta\psi_a^c)=(\omega,\frac{Rp_{\delta}}{pf_0}\Delta T_a)$ we can use the equation for $T_a$ to eliminate $\omega$, and all the terms coming from $(\omega,\Delta T_a)$ can be estimated as follows:
\begin{align}
\label{ineq: strongDeltaT_a}
\begin{split}
    &\int_0^t \big(\lambda(T_a-T_o)-\epsilon_a\sigma_B|T_o|^3_{\bbR}T_o+2\epsilon_a\sigma_B|T_a|^3_{\bbR}T_a-R_a(T_a)+J(\psi_a^t, T_a),\Delta T_a\big)ds \\
    &\quad \le C_2 \left( \Vert T_a-T_o\Vert^2_{L^2_tL^2_x}+\Vert T_o\Vert^8_{L^{\infty}_tH^1_x}+\Vert T_a\Vert^8_{L^{\infty}_tH^1_x} \right.\\
    &\qquad \left. +\Vert R_a(T_a)\Vert^2_{L^2_tL^2_x}+\Vert T_a\Vert^2_{L^{\infty}_tH^1_x}\Vert \psi_a^t\Vert^2_{L^2_tH^2_x}+\Vert T_a\Vert^2_{L^2_tH^2_x}\right).
    \end{split}
\end{align}
An interesting fact is that, due to the higher regularity of $T_a$ and differently to what we did to show existence of the Quasi-Strong Solution, to get inequality \eqref{ineq: strongDeltaT_a} we do not need integration by parts on the radiation terms. Indeed, the only integration by part is needed to estimate $(J(\psi_a^t, T_a),\Delta T_a\big)$ using \eqref{estJ1}, but this does not produce a boundary term because $\psi_a^t$ vanishes on the boundary. Hence, to show the existence of the Strong Solution we do not need an additional relation between the boundary values analogue to \eqref{Tobal}, and consequently the function $R_a$ can 
also explicitly depend on time and space.

Using the regularity in space of $\bm\psi$ one can also show that
$\pdv{\Delta\bm\psi}{t}\in \bmL^{2}_t\bmH^{-1}_x$ and hence $\bm{\psi}\in C([0, t_*],\bmH^2(\Omega))$. Strong Solutions, being also weak, are unique.


In order to prove that the Strong Solution generates a semidynamical system we need to show that the strong solutions depend continuously on the initial datum.
We use the  same technique we followed for the Quasi-Strong Solution in Section~\ref{QSuniq}.
We multiply the equations for $\bm{\delta\psi}=\bm{\tilde{\psi}}-\bm{\hat{\psi}}$, where $\bm{\tilde{\psi}},\ \bm{\hat{\psi}}$ are two Strong Solutions for \ref{bb}, by $\Delta\bm{\delta\psi}$ in $\bmL^2_x$ with weight $(1,1,\kappa)$. We note that $(J(\psi_i,\Delta\delta\psi_j),\Delta\delta\psi_k)=0$ when $j=k$ as a consequence of \eqref{Jpropfond} and the other Jacobian terms can be estimated using \eqref{estJ2}, while the boundary condition $\restr{A\bm{\delta\psi}}{\partial\Omega}=0$ allows to treat the viscosity term.
To get rid of the term with $\omega$, we take the $L^2_x$ scalar product of the equations for $\delta T_a=\tilde{T}_a-\hat{T}_a$ with $\Delta\delta T_a$ and of the equation for $\delta T_o=\tilde{T}_o-\hat{T}_o$ with $\Delta\delta T_o$.
All the radiative terms (linear, short-wave and infrared) and the Jacobian terms in this equation can be treated similarly as we did in the proof for the uniqueness of Quasi-Strong Solution, but in this case we need also to assume that $R_a$ satisfies the following
\begin{align}
    \label{altrastimaR}
    (R_a(\tilde{T}_a)-R_a(\hat{T}_a),\Delta\delta T_a)\le \mathcal{L}_{R_a}(t)|\nab\delta T_a|^2 +\frac{\nu_T}{8}|\Delta \delta T_a|^2,
\end{align}
where $\mathcal{L}_{R_a}(t)$ is locally integrable in time.

Applying Gronwall's Lemma we obtain a bound in terms of the following norm in $\bmH^2\times H^1$:
\begin{align*}
    \Vert (\bm\psi, T_o)\Vert^2_{\bmH^2\times H^1}:=&\Vert \bm{\psi}\Vert_{\bmH^2}^2+\kappa\frac{|\nab\psi_o|^2}{L^2_R}+\mu\bl(\frac{4\gamma_a f_0^2}{R^2}|\nab \psi_a^c|^2+ \gamma_o|\nab T_o|^2\br).
\end{align*}
Summarizing, we have proved the following: 
\begin{thm}[Existence of the Strong Solution for MAOOAM and continuity w.r.t. initial condition]
    Assume that the hypotheses of Theorem~\ref{teoexuniQS} are fulfilled, and that $R_a$ satisfies \eqref{altrastimaR}. If $(\bm{\psi}(0),T_o(0)-T_o^{(0)})\in \bmD(A)\times D(A^{\frac{1}{2}})$ then there exists a unique Strong solution 
    \[
        (\bm{\psi},T_o-T_o^{(0)})\in C\left([0, t_*],\,\bmD(A)\times D(A^{\frac{1}{2}})\right)
    \] 
    for MAOOAM which is Lipschitz continuous with respect to the initial datum in $\bmH^2\times H^1$.
\end{thm}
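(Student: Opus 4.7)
The plan is to upgrade the Quasi-Strong construction of Section~\ref{QSsec} by deriving an additional uniform a priori bound for the Galerkin approximants $\bm{\psi}_n$ in $\bmL^{\infty}_t\bmD(A)\cap \bmL^2_t\bmD(A^{3/2})$, while keeping the ocean temperature at the $H^1$ regularity already established in the Quasi-Strong step. First, I would take the $\bmL^2_x$ inner product of the projected system \ref{bb} with $\Delta\bm{\psi}_n$, using the weight $(1,1,\kappa)$ so that the couplings combine with the correct signs. By the identity \eqref{Jpropfond} together with the boundary conditions $\restr{\bm\psi}{\partial\Omega}=\restr{\Delta\bm\psi}{\partial\Omega}=0$, every triple product $(J(\psi_i,\Delta\psi_j),\Delta\psi_k)$ vanishes when $j=k$ and the remaining four Jacobian terms cancel in pairs. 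The only obstruction is then the vertical-velocity contribution $-(\omega,\Delta\psi_a^c)$, which by \ref{eq:Tb} equals a multiple of $(\omega,\Delta T_a)$.

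To eliminate this term I would take the $L^2_x$ product of the atmospheric temperature equation in \ref{Temp} with $\Delta T_a$, obtaining the bound \eqref{ineq: strongDeltaT_a}. The key observation here, differing from the Quasi-Strong case, is that $T_a\in L^\infty_t H^1_x\cap L^2_t H^2_x$ already follows from $\bm\psi\in \bmL^\infty_t\bmH^1_{0,x}\cap \bmL^2_t\bmH^2_x$ through the algebraic relation \ref{eq:Tb}, so the quartic infrared products $(|T_i|^3 T_i,\Delta T_a)$ can be controlled directly by Hölder and Sobolev embedding without any integration by parts. Consequently no boundary compatibility condition analogous to \eqref{Tobal} is required at the atmospheric level, and $R_a$ is allowed to depend explicitly on $t,x,y$. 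The transport term $(J(\psi_a^t,T_a),\Delta T_a)$ is handled via \eqref{estJ1}, whose boundary contribution vanishes because $\restr{\psi_a^t}{\partial\Omega}=0$. Combined with the uniform $L^\infty_t H^1_x\cap L^2_t H^2_x$ bound on $T_o$ from Section~\ref{QSsec}, Gronwall's inequality then yields the desired uniform estimate for $\bm\psi_n$. Passing to the limit via Aubin-Lions gives existence of the Strong Solution, and the fact that $\partial_t\Delta\bm\psi\in \bmL^2_t\bmH^{-1}_x$ upgrades this to $\bm\psi\in C([0,t_*],\bmH^2)$; uniqueness is inherited from Theorem~\ref{teoexuniqweak}.

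For the Lipschitz continuous dependence on the initial datum I would mimic Section~\ref{QSuniq} with stronger test functions: subtract two Strong Solutions and take the $\bmL^2_x$ product of the equations for $\bm{\delta\psi}$ with $\Delta\bm{\delta\psi}$ in the weighted pairing, together with $L^2_x$ products of the equations for $\delta T_a$ and $\delta T_o$ with $\Delta\delta T_a$ and $\Delta\delta T_o$ respectively, so that the $\omega$ contributions cancel by the same mechanism as in the existence argument. The infrared differences are treated as in Section~\ref{QSuniq}, splitting $|\tilde T|^3\tilde T-|\hat T|^3\hat T = |\hat T|^3\delta T+(\tilde T^2+\hat T^2)\tilde T\,\delta T$ and shifting regularity through Hölder in appropriate $L^p_tL^q_x$ spaces using the embedding $H^1\hookrightarrow L^8$. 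The short-wave radiation difference in the atmospheric equation is absorbed precisely by the hypothesis \eqref{altrastimaR}, and the corresponding ocean term by \eqref{stimaqsR} in \cref{assum:qss}. A final Gronwall argument with the prefactor controlled by the already-known Strong norms then produces the Lipschitz bound in $\bmH^2\times H^1$.

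The principal obstacle I anticipate is the careful bookkeeping of which quartic radiation products are truly integrable at the current level of regularity, so that the prefactor appearing in the Gronwall step is locally integrable on $[0,t_*]$. The asymmetry between $T_a$, which inherits $H^2$ regularity from $\psi_a^c$, and $T_o$, which sits only in $L^\infty_t H^1_x\cap L^2_t H^2_x$, forces the mixed products $(|T_o|^3 T_o,\Delta T_a)$ and $(|T_a|^3 T_a,\Delta T_o)$ to be treated asymmetrically, and integration by parts must be reserved for situations in which the resulting boundary terms genuinely vanish; this is the delicate technical point that drives the precise hypotheses \eqref{altrastimaR} and \cref{assum:qss}.
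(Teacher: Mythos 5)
Your proposal is correct and follows essentially the same route as the paper: testing the streamfunction equations against $\Delta\bm{\psi}$ with weight $(1,1,\kappa)$, exploiting the cancellation of the Jacobian triple products, eliminating $\omega$ through the $T_a$ equation tested against $\Delta T_a$ to reach \eqref{ineq: strongDeltaT_a} (including the observation that no integration by parts, and hence no analogue of \eqref{Tobal}, is needed for the radiation terms, so $R_a$ may depend on $t,x,y$), and then establishing Lipschitz dependence by testing the difference equations against $\Delta\bm{\delta\psi}$, $\Delta\delta T_a$, $\Delta\delta T_o$ under \eqref{altrastimaR} and \cref{assum:qss}. No substantive deviation from the paper's argument.
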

\section{Existence of the Global Attractor and Estimate of its Dimension}\label{chapactr}
In the previous sections we proved that all kind of solutions for the atmosphere-ocean model generate a semidynamical system, under the additional hypotheses that the short-wave radiations $R_a$ and $R_o$ may depend on the temperature but not on time. 
We will focus now on the class of weak solutions, so that our semidynamical system is $((\bmH_0^1 \times L^2)(\Omega),\ \{S(t)\}_{t\ge0})$, where $S(t)$ is the continuous semigroup of solutions operator $S(t)(\bm\psi(0),T_o(0))=(\bm\psi(t),T_o(t))$. We will show the existence of a finite dimensional global attractor and that the semigroup is injective on the attractor.
\subsection{Existence of the Attractor}
\label{existattr}
In this subsection, we show the existence of a compact absorbing set, which then by classical results (see e.g.~\cite[Theorem 10.5]{Robinson}) implies the following:
\begin{thm}
Assume that the hypotheses ensuring the existence and uniqueness of the Strong Solution are fulfilled. Additionally assume that there exist positive constants $c_1$ and $c_2$ such that  $|R_a(T_a)|^2\le c_1+c_2(|T_a|^2+|\nab T_a|^2)$. Then there exists a global attractor $\mathcal{A}$ for the solutions of MAOOAM in $(\bmH_0^1\times L^2)(\Omega)$.
\end{thm}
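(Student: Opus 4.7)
The goal is to produce a compact absorbing set for $\{S(t)\}_{t \geq 0}$ in $(\bmH_0^1 \times L^2)(\Omega)$; together with the continuity of $S(t)$ established in Theorem~\ref{teoexuniqweak}, this will conclude by the classical attractor theorem invoked in the statement (\cite[Theorem~10.5]{Robinson}). I construct the compact absorbing set by a two-step bootstrap: first absorption into a ball of $(\bmH_0^1 \times L^2)(\Omega)$, and then asymptotic regularisation into a bounded set of $(\bmH^2 \times H^1)(\Omega)$, which by Rellich--Kondrachov embeds compactly into the energy space.

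\textbf{Step 1 (absorbing ball in the energy space).} From the differential form of \ref{enes1}, combining the dissipative terms $\nu_S \|\bm\psi\|^2_{\bmH^2}$, $\nu_T|\nabla T_o|^2$, $\lambda|T_a - T_o|^2$ and the infrared $L^5$ contributions with Poincar\'e's inequality yields a coercivity of the form
\begin{equation*}
\frac{d}{dt}\|(\bm\psi, T_o - T_o^{(0)})\|^2_{\bmH_0^1\times L^2} + c_*\|(\bm\psi, T_o - T_o^{(0)})\|^2_{\bmH_0^1\times L^2} \le K,
\end{equation*}
where $K$ is a fixed constant because, the radiations being time-independent and sub-linear in $\bmH^1$, the quantity $E(t)$ is uniformly bounded. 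Gronwall then produces a bounded absorbing set $\mathcal{B}_0 \subset (\bmH_0^1 \times L^2)(\Omega)$. Integrating the same inequality over sliding windows $[t, t+1]$ also yields, uniformly in $t$ large, a bound on $\int_t^{t+1}(\|\bm\psi\|^2_{\bmH^2} + |\nabla T_o|^2 + |T_o|^5_{L^5})\, ds$ along any orbit entering $\mathcal{B}_0$, which is the crucial local-in-time integrability required by Step~2.

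\textbf{Step 2 (asymptotic $\bmH^2 \times H^1$-regularity).} Repeating the higher-order energy computations of Sections~\ref{QSsec} and~\ref{Strsec} (pairing $\bm\psi$ with $A\bm\psi$ in $\bmL^2$ with weight $(1,1,\kappa)$, exploiting the triple-Jacobian cancellation noted there, eliminating $\omega$ via the $T_a$-equation as in \eqref{ineq: strongDeltaT_a}, and pairing $T_o$ with $-\Delta T_o$ as in Section~\ref{QSsec}), one arrives at a differential inequality $y'(t) \le g(t) y(t) + h(t)$ for $y(t) := \|\bm\psi(t)\|^2_{\bmH^2} + |\nabla T_o(t)|^2$. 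The growth hypothesis $|R_a(T_a)|^2\le c_1+c_2(|T_a|^2+|\nabla T_a|^2)$ together with \ref{eq:Tb} ensures that the short-wave radiation contributes to $gy+h$ only at the $\bmH^1$-level of $\bm\psi$, already absorbed in Step~1. The uniform local-in-time integrability of $g$ and $h$ furnished by Step~1 then allows invoking the uniform Gronwall lemma (\cite[Lemma~III.1.1]{Temam}) to conclude that $y(t)$ is uniformly bounded for $t$ sufficiently large, producing the desired bounded absorbing set $\mathcal{B}_1 \subset (\bmH^2 \times H^1)(\Omega)$. Performing these computations for weak-only initial data is justified by approximating the datum in $\bmD(A)\times D(A^{1/2})$, running the estimate on strong solutions where all quantities make sense, and passing to the limit via the continuous dependence in Theorem~\ref{teoexuniqweak}.

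\textbf{Main obstacle and conclusion.} The principal difficulty is the super-linear character of the right-hand sides of \eqref{estTo} and \eqref{ineq: strongDeltaT_a} (terms of degree six and eight in $\bmH^1$ norms), which would escape a naive Gronwall estimate. This is precisely what the uniform Gronwall lemma is designed to absorb: Step~1 provides an $L^\infty_t$-bound on $\|\bm\psi\|_{\bmH^1}$ and $|T_o|$, so that powers such as $\|\psi_a^c\|^6_{\bmH^1}$ reduce, up to uniform constants, to $\|\psi_a^c\|^2_{\bmH^1}$ whose sliding-window integral is controlled. Once $\mathcal{B}_1$ is in hand, the compact embedding $\bmH^2 \times H^1 \hookrightarrow \bmH_0^1 \times L^2$ on the bounded domain $\Omega$ makes it a compact absorbing set in the energy space, and \cite[Theorem~10.5]{Robinson} yields the global attractor $\mathcal{A}$.
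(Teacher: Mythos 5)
Your proposal is correct and follows essentially the same route as the paper: an absorbing ball in $\bmH_0^1\times L^2$ from the time-independent version of \ref{enes1} plus Poincar\'e and Gronwall, sliding-window integral bounds, a bootstrap to a bounded absorbing set in $\bmH^2\times H^1$ via the higher-order estimates of Sections~\ref{QSsec} and \ref{Strsec} (where the growth hypothesis on $R_a$ enters exactly as you indicate) closed with the uniform Gronwall lemma, and finally compactness of the embedding $\bmH^2\times H^1\hookrightarrow \bmH_0^1\times L^2$ together with the classical attractor theorem. Your remark on justifying the higher-order computation for weak initial data by approximation is a reasonable addition that the paper leaves implicit.
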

In the rest of the discussion we will use the following notation:
\begin{align}
\label{defWeS}
\bbW=\bmH_0^1\times L^2, \quad \bbS=\bmH^2\times H^1.
\end{align}
We start showing that there exists an absorbing set for $(\bm{\psi},T_o)$ in $\bbW$: in this space we will use the norm $\Vert\cdot\Vert_{\bmH_0^1\times L^2}$ defined in \eqref{weaknorm}. From the energy estimate \ref{enes1}, neglecting all the damping terms except the one related to the friction with the bottom of the ocean 
we get
\begin{multline}
\label{eness}
    \frac{1}{2}\pdv{}{t}\Vert (\bm\psi, T_o)\Vert_{\bmH_0^1\times L^2}^2 +\nu_S\Vert \bm{\psi}\Vert_{\bm{H}^2}^2+\mu\tilde{\nu}_T(|\nabla (T_a-T_a^{(0)})|^2+|\nabla (T_o-T_o^{(0)})|^2) +r\kappa |\nab \psi_o|^2\le E,
\end{multline} 
where $E$ is the same energy bound we derived in Section~\ref{sectenergy} but now independent of time.
From \eqref{eness}, using Poincaré inequality first and then Gronwall's Lemma we find
\begin{align*}
\Vert (\bm\psi, T_o)(t)\Vert_{\bmH_0^1\times L^2}^2\le \Vert (\bm\psi, T_o)(0)\Vert_{\bmH_0^1\times L^2}^2 e^{-\Lambda_0 t}+\frac{E}{\Lambda_0}(1-e^{-\Lambda_0 t}),
\end{align*}
where 
\[\Lambda_0=\lambda_1\min\bigg\{\nu_S,\frac{\tilde{\nu}_T}{\gamma_o},\frac{\tilde{\nu}_T}{\gamma_a}, rL^2_R\bigg\}\]
and $\lambda_1$ is the smallest eigenvalue of $-\Delta$.
Now we can define $\rho_{\bbW}^2:=\frac{2E}{\Lambda_0}$ such that 
\begin{align}
\label{ragASw}
\Vert (\bm\psi, T_o)(t)\Vert_{\bmH_0^1\times L^2}^2\le\rho_{\bbW}^2 , \ \quad \text{for all } t\ge \frac{1}{\Lambda_0}\ln(\frac{\Lambda_0\Vert (\bm\psi, T_o)(0)\Vert_{\bmH_0^1\times L^2}}{E})=:t_0,
\end{align}
so we can conclude that the solution $(\bm{\psi},T_o)$ of our model admits an absorbing set in $\bbW$. In addition, integrating on the interval $[t,t+1]$ inequality \eqref{eness} we can find the uniform bound in time $\int_t^{t+1}(\Vert \bm{\psi}(s)\Vert_{\bm{H}^2}+|\nab T_o(s)|^2)ds \le \frac{1}{2}(\rho_{\bbW}^2+2E)=:I_{\bbW}$ for $t\ge t_0$.

Using the values for the physical constants of the model  in \cite{MAOOAM} we find that $\Lambda_0=\lambda_1\frac{\tilde{\nu}_T}{\gamma_o}$ is of order $10^{-18}$. To get a lower bound for the order of magnitude of $E$ we drop all the contributions coming from $R_a$, $T_a^{(0)}$ and $T_o^{(0)}$ as they are negligible and we take $R_o$ constant with value $200\ W m^{-2}$ as in \cite{VannitsemGhil}. This tells us that $E$ is at least of order $10^{16}$.
\begin{rem}
    We observe that it is possible to find an absorbing set without using the diffusive terms. Hence, this will give a radius independent of the viscosity coefficients $\nu_S$ and $\tilde{\nu}_T$.  Indeed, ignoring in \ref{enes1}  the viscosity terms and using  instead 
    $k_d|\nab(\psi^t+\psi^c-\psi_o)|^2$, $2k'_d|\nab\psi^c|^2$, $r\kappa|\nab\psi_o|^2$, $\tilde{\epsilon}_a\sigma_B|T_a|^5_{L^5}$ and $\tilde{\epsilon}_a\sigma_B|T_o|^5_{L^5}$ we got an exponential decay rate of order $10^{-16}$, slightly smaller than the constant $\Lambda_0$ above.
\end{rem}

Next we want to find an absorbing set bounded  in $\bbS$: we start by computing a uniform bound in $H^1_x$ for $T_o$ for large times. We use inequality \eqref{estTo} and the estimates for the absorbing set of $\psi_a^c$ in $H^1_x$ and of $T_o$ in $L^2_x$ to get the following inequality:
\begin{align*}
\pdv{}{t} |\nabla T_o|^2\le\alpha_1(|\Delta\psi_o|^2+|\Delta\psi_a^c|^2+1)|\nab T_o|^2+\alpha_2, \quad \text{for all } t\ge t_0,
\end{align*}
where $t_0$ is the same of \eqref{ragASw}, $\alpha_1$ and $\alpha_2$ 
are positive constants which depend on the radii of the absorbing sets. Applying now the Uniform Gronwall's Lemma (see \cite[Chapter 1, Lemma 1.1]{Temam}) and the integral bound $\int_{t-1}^t (|\Delta\psi_o(r)|^2+|\nab T_o|^2)ds\le I_{\bbW}$ we get the uniform estimate \[|\nab T_o(t)|^2\le e^{\alpha_1 (I_{\bbW}+1)}\bl(I_{\bbW}+\frac{\alpha_2}{2}\br)=:\rho_{\bbQS}^2\] for all $t\ge t_0+1$. Integrating now inequality \eqref{estTo} in the interval $[t-1,t]$ and using the uniform bounds in time we have found for $|\nab T_o|$, $\Vert\bm\psi\Vert_{\bmH^1}$ and $\Vert\bm\psi\Vert_{\bmH^2}$ one can show that there exists a positive constant $I_{\bbQS}$ such that $\int_t^{t+1}|\Delta T_o(s)|^2ds\le I_{\bbQS}$ for all $t\ge t_0+2$.

Finally it remains to find an uniform bound for $\bm{\psi}$ in $\bmH^2_x$. We multiply in $\bm{L}^2_x$ the equations for $\bm{\psi}$ with $\Delta\bm{\psi}$ as we did when we discussed the existence of the Strong Solution in Section~\ref{Strsec}, where we have seen that all the Jacobian terms cancel. Hence we need to control only the terms $\beta \big(\partial_x \bm{\psi},\Delta\bm{\psi}\big)_{\bm{L}^2}$ and $\big(\omega,\Delta\psi_a^c\big)$, since the other remaining terms have the right sign and can be dropped.
The first is easy, while for the second one we need to estimate the various terms in the equation for $T_a$, using the uniform bounds of $\bm\psi$ and $T_o$ in $H^1$. For the transport term $J(\psi_a^t,T_a)$ we can use the property \eqref{estJ1},
while for the long-wave radiation we can use the embedding of $H^1$ in $L^8$ and relation \ref{eq:Tb}.
Finally, we can estimate $(R_a(T_a),\Delta T_a)$ using Young's inequality and the hypotheses $|R_a(T_a)|^2\le c_1+c_2(|T_a|^2+|\nab T_a|^2)$.

The final estimate will therefore have the following form:
\begin{align}
    \label{fortenuova}
    \pdv{}{t} \biggl(\Vert \bm{\psi}\Vert_{\bmH^2}^2+\kappa\frac{|\nab\psi_o|^2}{L^2_R}&+\mu\gamma_a|\nab T_a|^2\biggr)+\nu_S| A^{3/2}\bm{\psi}|^2\le c_1(\rho_{\bbW},\rho_{\bbQS})\Vert \bm{\psi}\Vert_{\bmH^2}^2+c_2
\end{align} 
where we relabelled $c_1$ and $c_2$ positive constants. Now we can apply the uniform Gronwall's Lemma to get
$\Vert \bm{\psi}(t)\Vert_{\bm{H}^2}^2
\le c_1 I_{\bbW}+\frac{c_2}{2}=:\rho_{\bbS}^2$ for all $t\ge t_1$.
Moreover, integrating inequality \eqref{fortenuova} from $t$ to $t+1$ we easily find a constant $I_{\bbS}$ such that 
$\int_t^{t+1}|A^{3/2} \bm\psi(s)|^2ds\le I_{\bbS}$ for all $t\ge t_1+1.$
Hence we have shown that the ball with maximum radius between $\rho_{\bbQS}$ and $\rho_{\bbS}$ is an absorbing set in $\bbS$. Therefore, since $\bbS$ is compactly embedded into $\bbW$, we proved that this ball is a compact absorbing set  for the semidynamical system generated by the solutions of MAOOAM in $\bbW$.
\subsection{Injectivity of the semigroup on the attractor}
\label{injectsect}
In this subsection, we want to show that on the attractor the dynamics actually define a dynamical system, that is $\restr{S(t)}{\mathcal{A}}$ makes sense for all $t\in \bbR$. This follows from the property of injectivity of $S(t)$ on $\mathcal{A}$, see \cite[Theorem 10.6]{Robinson}, which will be shown in this subsection. However, we cannot directly apply \cite[Lemma 11.9]{Robinson} to obtain injectivity, because in our context the role of $H$ is taken by $\bbW$ and the role of $V$ by $\bbS$, which do not have the structure  $V\subset H \simeq H^* \subset V^*$ required in the aforementioned lemma. Therefore, we will develop a modified version of this lemma.

In the previous subsection we showed that the attractor is a subset of $\bbS$ and hence a weak solution $(\bm{\psi},T_o)$ starting on the the global attractor has the following higher regularity: $\bm{\psi}\in \bmL^{\infty}_t\bmD(A)\cap \bmL^2_t \bmD(A^{3/2})$, $T_o-T_o^{(0)}\in L^{\infty}_t D(A^{1/2})\cap L^2_t D(A)$.

Let us first show, taking prototypical Jacobians terms, some estimates useful in the proofs of Lemma \ref{lemlambda} and Theorem~\ref{teoinjfin}. Applying \eqref{Jpropfond}, H\"older inequalities and the embedding of $H^1$ in $L^4$ and of $H^2$ in $L^{\infty}$ we find
\begin{align}
\label{ineq:k1}
|J(\tilde{\psi}^t,\Delta\tilde{\psi}^c)-J(\hat{\psi}^t,\Delta\hat{\psi}^c)|_{H^{-1}}&
\le|\nab\delta\psi^t|_{L^{4}}|\Delta\tilde{\psi}^c|_{L^4}
+c |\nab\hat{\psi}^t|_{L^{\infty}}|\Delta\delta\psi^c|
\le k_1(t)|\Delta\bm{\delta\psi}|,
\end{align}
where $k_1(t)=c_1(|A^{3/2}\tilde{\psi}^c|+|A^{3/2}\hat{\psi}^t|)$ is square integrable in time since $\hat{\psi}^t,\tilde{\psi}^c\in L^2_tD(A^{3/2})$. 
Instead, the estimates for the nonlinear terms in the equation for $T_o$ will be taken in $L^2_x$. So the Jacobian terms are estimated using directly H\"older inequality as follows
\begin{align}
\label{ineq:k2}
|J(\tilde{\psi}_o,\tilde{T}_o)-J(\hat{\psi}_o,\hat{T}_o)|
&\le |\nab\delta\psi_o|_{L^4}|\nab\tilde{T}_o|_{L^4}+|\nab\hat{\psi}_o|_{L^{\infty}}|\nab\delta T_o|\notag 
\\ &\le 
k_2(t)\bl(\frac{\tilde{\nu}_T}{\gamma_o}|\delta T_o|_{H^1}+\nu_S|\Delta\delta\psi_o|\br),
\end{align} where $k_2(t)= c_2(|\Delta\tilde{T}_o|+|A^{3/2}\hat{\psi}_o|) \in L^2_t$ since $\hat{\psi}_o\in L^2_tD(A^{3/2})$ and $\tilde{T}_o\in L^2_tD(A)$. Finally, for the long-wave radiation we can use the embedding of $H^1$ in $L^4$ and for the short-wave radiation is enough to assume $R_o$ Lipschitz continuous from $H^1$ to $L^2$ with respect to the temperature in order to get
\begin{align}
\label{ineq:k3}
&||\tilde{T}_o|_{\bbR}^3\tilde{T}_o-|\hat{T}_o|_{\bbR}^3\hat{T}_o|+|R_o(\tilde{T}_o)-R_o(\hat{T}_o)|
\le k_3(t)|\delta T_o|_{H^1},
\end{align} where $k_3(t)=c_3(1+|\tilde{T_o}|^3_{H^1}+|\hat{T_o}|^3_{H^1})\in L^2_t$ because $\tilde{T}_o$ and $\hat{T}_o$ are uniformly bounded in time in $H^1$ on the attractor and the same estimate hold with $T_a$ instead of $T_o$.

Before starting with the first lemma, we will fix some notations. Taking the positive operator
\begin{equation}
\label{operD} \mathcal{D}=\bl(-\Delta,-\Delta+\frac{2f_0^2}{p_{\delta}^2\sigma},-\Delta+\frac{1}{L^2_R}\br),
\end{equation}
we can write the equations for the difference between two weak solutions $\bm{\delta\theta}:=(\bm{\delta\psi},\delta T_o)$ in the following compact notation:
\begin{align}
\partial_t \bm{\delta\psi}&=-\nu_S\mathcal{D}^{-1}\Delta^2\bm{\delta\psi}-\mathcal{D}^{-1}f(\bm{\delta\theta})\label{eqabbrpsi}\\
\partial_t \delta T_o&=\frac{\tilde{\nu}_T}{\gamma_o}\Delta \delta T_o +g(\bm{\delta\theta}), \label{eqabbrTo}
\end{align}
where $f$ and $g$ contain all the remaining terms in the equations for $\bm{\delta\psi}$ and $\delta T_o$.
We also define the following norms
\begin{align}
\label{deftildeW}
\Vert \bm{\delta\theta}\Vert_{\tilde{\bbW}}^2:=(\mathcal{D}\bm{\delta\psi},\bm{\delta\psi})+(\delta T_o,\delta T_o) \quad \mbox{and}\quad
\Vert\bm{\delta\theta}\Vert^2_{\nu\bbS}:= \nu_S|\Delta\bm{\delta\psi}|^2+\frac{\tilde{\nu}_T}{\gamma_o}|\nab \delta T_o|^2.
\end{align}
The main result of this section is the following:
\begin{thm}
    \label{teoinjfin}
    Let $\bm{\delta\theta}(t)$ be the difference of two weak solutions $(\bm{\tilde\psi},\tilde{T}_o)$ and $(\bm{\hat\psi},\hat{T}_o)$ of MAOOAM starting on the attractor. Let $R_a(T_a)$ and $R_o(T_o)$ be Lipschitz continuous from $H^1$ to $L^2$. If $\bm{\delta\theta}(t_*)=0$ for some $t_*>0$ then $\bm{\delta\theta}(t)=0$ for all $0\le t\le t_*$, namely the semigroup $S(t)$ is injective on the attractor $\mathcal{A}$. As a consequence, the weak solutions of MAOOAM generate a dynamical system on $\mathcal{A}$.
\end{thm}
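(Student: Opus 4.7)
The plan is to adapt the Agmon--Nirenberg log-convexity technique (the backbone of \cite[Lemma 11.9]{Robinson}) by working directly in the pair $\tilde{\bbW},\,\nu\bbS$, bypassing the obstruction -- noted by the authors -- that these do not form a standard Gelfand triple. The key object is the Dirichlet-type quotient $\Lambda(t):=\Vert\bm{\delta\theta}(t)\Vert_{\nu\bbS}^2/\phi(t)$ where $\phi(t):=\Vert\bm{\delta\theta}(t)\Vert_{\tilde{\bbW}}^2$, defined whenever $\phi(t)>0$. Lemma~\ref{lemlambda}, proved just beforehand, is expected to supply a Riccati-type inequality of the form $\Lambda'(t)\le h(t)(1+\Lambda^2)$ with $h\in L^1_{\rm loc}$, guaranteeing local integrability of $\Lambda$ on any interval where $\phi$ stays positive.

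The energy identity is obtained by pairing \eqref{eqabbrpsi} with $\mathcal{D}\bm{\delta\psi}$ in $\bmL^2$ and \eqref{eqabbrTo} with $\delta T_o$ in $L^2$ -- both legitimate because on the attractor $\bm{\delta\psi}\in\bmL^2_t\bmD(A^{3/2})$ and $\delta T_o\in L^2_t D(A)$ -- yielding
\begin{equation*}
\frac{1}{2}\frac{d\phi}{dt}+\Vert\bm{\delta\theta}\Vert_{\nu\bbS}^2=-(f(\bm{\delta\theta}),\bm{\delta\psi})+(g(\bm{\delta\theta}),\delta T_o).
\end{equation*}
Using \eqref{ineq:k1}--\eqref{ineq:k3} together with analogous bounds for the linear friction, $\beta$-plane and infrared coupling contributions to $f,g$, the right-hand side is dominated by $k(t)\,\phi^{1/2}\Vert\bm{\delta\theta}\Vert_{\nu\bbS}$ with $k\in L^2_{\rm loc}(0,t_*)$. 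Young's inequality then produces the decisive lower bound
\begin{equation*}
\frac{d}{dt}\ln\phi(t)\;\ge\;-C\Lambda(t)-k^2(t)\qquad\text{whenever }\phi(t)>0.
\end{equation*}

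To close the argument, I would argue by contradiction: assume $\bm{\delta\theta}(t_0)\ne 0$ for some $t_0<t_*$. By continuity of weak solutions in $\bbW$ (\cref{teoexuniqweak}), there is a maximal $T\in(t_0,t_*]$ with $\phi>0$ on $[t_0,T)$, and either $T=t_*$ or $\phi(T)=0$. By Lemma~\ref{lemlambda} the integral $\int_{t_0}^T\!(C\Lambda(s)+k^2(s))\,ds$ is finite; integrating the log-derivative bound on $[t_0,T')$ for $T'<T$ and sending $T'\to T$ yields
\begin{equation*}
\phi(T)\;\ge\;\phi(t_0)\exp\!\Bigl(-\!\int_{t_0}^{T}\!\bigl(C\Lambda(s)+k^2(s)\bigr)\,ds\Bigr)>0,
\end{equation*}
contradicting either $\phi(T)=0$ or the hypothesis $\phi(t_*)=0$. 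The dynamical-system statement then follows from \cite[Theorem 10.6]{Robinson}. The hard part will be Lemma~\ref{lemlambda}: establishing the Riccati bound requires testing the system with the higher-order derivatives $\Delta^2\bm{\delta\psi}$ and $-\Delta\delta T_o$ and then re-expressing each mixed bilinear term so that factors involving $\bm{\delta\psi},\delta T_o$ are absorbed into $\phi$, leaving only integrable prefactors times $1+\Lambda$ or $1+\Lambda^2$ -- a step that must balance the $\bmH^2$-control of $\bm{\delta\psi}$ against the weaker $H^1$-control of $\delta T_o$, precisely the asymmetry that prevents a direct appeal to the classical Robinson lemma.
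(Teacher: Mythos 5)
Your proposal follows essentially the same route as the paper's proof: the same Dirichlet quotient $\Lambda=\Vert\bm{\delta\theta}\Vert^2_{\nu\bbS}/\Vert\bm{\delta\theta}\Vert^2_{\tilde{\bbW}}$, the same lower bound $\frac{d}{dt}\ln\Vert\bm{\delta\theta}\Vert^2_{\tilde{\bbW}}\ge -C\Lambda-k^2(t)$ obtained from the energy identity, the estimates \eqref{ineq:k1}--\eqref{ineq:k3} and Young's inequality, and the same contradiction at the first vanishing time, with integrability of $\Lambda$ supplied by Lemma~\ref{lemlambda}. One caveat: a genuine Riccati inequality $\Lambda'\le h(t)(1+\Lambda^2)$ would \emph{not} by itself guarantee integrability of $\Lambda$ up to the vanishing time (it permits finite-time blow-up of $\Lambda$ with a non-integrable singularity); what Lemma~\ref{lemlambda} actually delivers, and what your concluding step tacitly relies on, is the linear bound $\Lambda'\le 2k^2(t)\Lambda$, which by Gronwall keeps $\Lambda$ bounded, hence integrable, on the whole interval.
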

\begin{proof}
    We prove the theorem by contradiction: assume that there exists $t_1 \in [0,t_*)$ such that $\bm{\delta\theta}(t_1)\neq 0$. Since
    $\bm{\delta\theta}\in C([0, t_*]; \bmD(A)\times H_0^1)$
    there exists a time $t_2$ for which 
    \[ \Vert\bm{\delta\theta}(t)\Vert_{\tilde{\bbW}}\neq 0 \quad \text{on}\ [t_1,t_2) \qquad \text{and} \quad \bm{\delta\theta}(t_2)=0.\]
    We can take the function $t\mapsto \ln \frac{1}{\Vert\bm{\delta\theta}(t)\Vert_{\tilde{\bbW}}}$. Differentiating it and using relations \eqref{eqabbrpsi} and \eqref{eqabbrTo} we can write
    \begin{align*}
        \dv{}{t} \ln \frac{1}{\Vert\bm{\delta\theta}\Vert_{\tilde{\bbW}}}&\le \frac{\nu_S|\Delta\bm{\delta\psi}|^2+\frac{\tilde{\nu}_T}{\gamma_o}|\nab \delta T_o|^2}{\Vert\bm{\delta\theta}\Vert^2_{\tilde{\bbW}}}+\frac{1}{\Vert\bm{\delta\theta}\Vert^2_{\tilde{\bbW}}}(\langle f(\bm{\delta\theta}),\bm{\delta\psi}\rangle-(g(\bm{\delta\theta}),\delta T_o))\\
        &\le \frac{\Vert\bm{\delta\theta}\Vert^2_{\nu\bbS}} {\Vert \bm{\delta\theta}\Vert_{\tilde{\bbW}}^2} +\frac{c}{\Vert\bm{\delta\theta}\Vert^2_{\tilde{\bbW}}}(\Vert f(\bm{\delta\theta})\Vert_{H^{-1}}+|g(\bm{\delta\theta})|)(|\mathcal{D}^{1/2} \bm{\delta\psi}|+|\delta T_o|).
    \end{align*}
    Therefore setting 
    \begin{equation}
    \label{eq:Lambda}
    \Lambda := \frac{\Vert\bm{\delta\theta}\Vert^2_{\nu\bbS}} {\Vert \bm{\delta\theta}\Vert_{\tilde{\bbW}}^2}
    \end{equation}
    and using estimates for the nonlinear terms like \eqref{ineq:k1},\eqref{ineq:k2} and \eqref{ineq:k3}, it follows that 
    \begin{align*}
       \dv{}{t} \ln \frac{1}{\Vert\bm{\delta\theta}\Vert_{\tilde{\bbW}}} 
        &\le \Lambda+ k(t)\Lambda^{1/2}.
    \end{align*}
    Applying Young's inequality to $k(t)\Lambda^{1/2}$ and integrating between $t_1$ and $t\in [t_1,t_2)$ we can bound the right hand side extending the integral to $t_*$ 
    \begin{align}
    \label{ineq:1/ln}
        \ln \frac{1}{\Vert\bm{\delta\theta}(t)\Vert_{\tilde{\bbW}}}
        &\le \ln \frac{1}{\Vert\bm{\delta\theta}(t_0)\Vert_{\tilde{\bbW}}} +\int_{t_1}^{t_*} (2\Lambda(s) +k^2(s))ds .
    \end{align}
    Now, since $k\in L^2_t$, if we show that $\Lambda$ is integrable in time, inequality \eqref{ineq:1/ln} will give a uniform bound in time for $\frac{1}{\Vert\bm{\delta\theta}(t)\Vert_{\tilde{\bbW}}}$. The property of integrability of $\Lambda$ is an immediate consequence of the upcoming \cref{lemlambda}.

    Therefore we have proved that $\frac{1}{\Vert\bm{\delta\theta}(t)\Vert_{\tilde{\bbW}}}$ is uniformly bounded over $[t_1,t_*)$, but this is in contradiction with our initial assumption that $\bm{\delta\theta}(t_2)=0$.
\end{proof}
\begin{lem}
\label{lemlambda}
    Let $\bm{\delta\theta}(t)$ satisfy the assumptions of \cref{teoinjfin}. Then, given $\Lambda$ as in \eqref{eq:Lambda}, there exists $k(t)\in L^2_t$ such that we have 
    \[ \Lambda(t)\le \Lambda(0)\exp(2\int_0^t k^2(s)ds) \quad \text{ for all }\ t>0.\]
\end{lem}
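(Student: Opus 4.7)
The plan is to establish a differential inequality of the form $\dot{\Lambda}\le k^{2}(t)\Lambda$ with $k\in L^{2}_{t}$ on the attractor, from which Gronwall yields the claim (in fact with the slightly stronger exponent $\int_{0}^{t}k^{2}\,ds$, comfortably within the stated bound). Writing \eqref{eqabbrpsi}--\eqref{eqabbrTo} jointly as $\partial_{t}\bm{\delta\theta}=\mathcal{L}\bm{\delta\theta}+F(\bm{\delta\theta})$ with
\[
    \mathcal{L}\bm{\delta\theta}:=\Bigl(-\nu_{S}\mathcal{D}^{-1}\Delta^{2}\bm{\delta\psi},\ \tfrac{\tilde{\nu}_{T}}{\gamma_{o}}\Delta\delta T_{o}\Bigr),\qquad F:=(-\mathcal{D}^{-1}f,\,g),
\]
the key observation is that, thanks to the factor $\mathcal{D}$ in the inner product of $\tilde{\bbW}$ defined in \eqref{deftildeW} cancelling $\mathcal{D}^{-1}$ in $\mathcal{L}$, the operator $\mathcal{L}$ is negative self-adjoint in $\tilde{\bbW}$ with $-\langle\mathcal{L}\bm{\delta\theta},\bm{\delta\theta}\rangle_{\tilde{\bbW}}=\Vert\bm{\delta\theta}\Vert_{\nu\bbS}^{2}$; the integration by parts involved is legitimate because on the attractor $\bm{\delta\psi}\in\bmL^{2}_{t}\bmD(A^{3/2})$ and $\delta T_{o}\in L^{2}_{t}D(A)$.

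Setting $N:=\Vert\bm{\delta\theta}\Vert_{\nu\bbS}^{2}$, $D:=\Vert\bm{\delta\theta}\Vert_{\tilde{\bbW}}^{2}$, $M:=\Vert\mathcal{L}\bm{\delta\theta}\Vert_{\tilde{\bbW}}^{2}$, differentiating directly using the self-adjointness of $\mathcal{L}$ gives
\[
    \tfrac{1}{2}\dot{D}=-N+\langle\bm{\delta\theta},F\rangle_{\tilde{\bbW}},\qquad \tfrac{1}{2}\dot{N}=-M-\langle\mathcal{L}\bm{\delta\theta},F\rangle_{\tilde{\bbW}},
\]
from which
\[
    \dot{\Lambda}=-\frac{2(M-\Lambda^{2}D)}{D}-\frac{2}{D}\langle\mathcal{L}\bm{\delta\theta}+\Lambda\bm{\delta\theta},F\rangle_{\tilde{\bbW}}.
\]
By the definition of $\Lambda=N/D$ one has $\langle\mathcal{L}\bm{\delta\theta}+\Lambda\bm{\delta\theta},\bm{\delta\theta}\rangle_{\tilde{\bbW}}=0$, and expanding the square collapses $\Vert\mathcal{L}\bm{\delta\theta}+\Lambda\bm{\delta\theta}\Vert_{\tilde{\bbW}}^{2}$ to $M-\Lambda^{2}D$, which is non-negative by Cauchy--Schwarz applied to the bilinear form $\langle\mathcal{L}\cdot,\cdot\rangle_{\tilde{\bbW}}$. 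Applying Cauchy--Schwarz to the remaining inner product and Young's inequality to the resulting $(M-\Lambda^{2}D)^{1/2}\Vert F\Vert_{\tilde{\bbW}}$ absorbs the negative contribution, leaving $\dot{\Lambda}\le \Vert F\Vert_{\tilde{\bbW}}^{2}/D$.

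It only remains to bound $\Vert F\Vert_{\tilde{\bbW}}^{2}\le k^{2}(t)N$ with $k\in L^{2}_{t}$. Using $\mathcal{D}\ge-\Delta$ componentwise one has $\Vert F\Vert_{\tilde{\bbW}}^{2}=(f,\mathcal{D}^{-1}f)+|g|^{2}\le |f|_{\bmH^{-1}}^{2}+|g|^{2}$, and the estimates \eqref{ineq:k1}--\eqref{ineq:k3} bound the right-hand side by a constant multiple of $\bigl(k_{1}^{2}(t)+k_{2}^{2}(t)+k_{3}^{2}(t)\bigr)\bigl(|\Delta\bm{\delta\psi}|^{2}+|\nab\delta T_{o}|^{2}+|\delta T_{o}|^{2}\bigr)$; the Poincar\'e inequality for $\delta T_{o}$ (homogeneous Dirichlet data) upgrades the zeroth-order term to the gradient, so that the right-hand side is controlled by $N$. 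The resulting prefactor $k^{2}(t)$ is $L^{1}_{t}$-integrable by the strong-solution regularity of both trajectories on the attractor, and Gronwall applied to $\dot{\Lambda}\le k^{2}(t)\Lambda$ delivers the desired bound. The main obstacle is identifying the weighted Hilbert structure on $\tilde{\bbW}$ in which $\mathcal{L}$ is self-adjoint: only once this is in place does the orthogonality identity $\Vert\mathcal{L}\bm{\delta\theta}+\Lambda\bm{\delta\theta}\Vert_{\tilde{\bbW}}^{2}=M-\Lambda^{2}D$ hold, producing the non-negative quantity that, by Young, exactly dominates the nonlinear coupling.
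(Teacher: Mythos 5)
Your proof is correct and follows essentially the same route as the paper: differentiate $\Lambda=N/D$, complete the square so that the dissipative part contributes $-\frac{c}{D}\Vert\mathcal{L}\bm{\delta\theta}+\Lambda\bm{\delta\theta}\Vert_{\tilde{\bbW}}^2=-\frac{c}{D}(M-\Lambda^2D)\le 0$, absorb the nonlinear cross term by Young, and bound $\Vert F\Vert_{\tilde{\bbW}}^2\le k^2(t)\,\Vert\bm{\delta\theta}\Vert_{\nu\bbS}^2$ via \eqref{ineq:k1}--\eqref{ineq:k3} before applying Gronwall. The only difference is presentational: you package the paper's componentwise completion of the square in \eqref{derivlambda2} as self-adjointness of the dissipative operator in the $\tilde{\bbW}$ inner product, which is a tidy but equivalent formulation.
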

\begin{proof}
We compute the derivative of $\Lambda$: using the definition of $\Lambda$, \eqref{eqabbrpsi} and \eqref{eqabbrTo} we get
\begin{equation}\label{derlambda}
\begin{split}
\frac{1}{2}\dv{\Lambda}{t}=\frac{1}{\Vert \bm{\delta\theta}\Vert^2_{\tilde{\bbW}}}\br\{ \langle \nu_S\Delta^2\bm{\delta\psi}&-\Lambda \mathcal{D}\bm{\delta\psi},
-\nu_S\mathcal{D}^{-1}\Delta^2\bm{\delta\psi}-\mathcal{D}^{-1}f(\bm{\delta\theta})
\rangle\\ 
\quad &-\bl(\frac{\tilde{\nu}_T}{\gamma_o}\Delta \delta T_o+\Lambda \delta T_o,\frac{\tilde{\nu}_T}{\gamma_o}\Delta \delta T_o +g(\bm{\delta\theta})\br)\br\}.
\end{split}
\end{equation}
Using the definition of $\Lambda$, we observe that 
\begin{align*}
    & 2\Lambda \bl(\nu_S\langle\mathcal{D}^{-1/2}\Delta^2\bm{\delta\psi},\mathcal{D}^{1/2}\bm{\delta\psi}\rangle+\frac{\tilde{\nu}_T}{\gamma_o}|\nab \delta T_o|^2\br)-\Lambda^2 (|\mathcal{D}^{1/2}\bm{\delta\psi}|^2+|\delta T_o|^2)=\\
    &=2\Lambda \bl(\nu_S|\Delta\bm{\delta\psi}|^2+\frac{\tilde{\nu}_T}{\gamma_o}|\nab \delta T_o|^2\br)-\Lambda^2 (|\mathcal{D}^{1/2}\bm{\delta\psi}|^2+|\delta T_o|^2)=\Lambda\bl(\nu_S|\Delta\bm{\delta\psi}|^2+\frac{\tilde{\nu}_T}{\gamma_o}|\nab \delta T_o|^2\br).
\end{align*} 
Hence we can rewrite the parts which do not contain $f$ and $g$ as
follows:
\begin{equation}
    \label{derivlambda2}
    \begin{split}
    \dv{\Lambda}{t}=&-\frac{|\nu_S\mathcal{D}^{-1/2}\Delta^2\bm{\delta\psi}-\Lambda \mathcal{D}^{1/2}\bm{\delta\psi}|^2}{\Vert \bm{\delta\theta}\Vert^2_{\tilde{\bbW}}}-\frac{|\frac{\tilde{\nu}_T}{\gamma_o}\Delta \delta T_o +\Lambda \delta T_o|^2}{\Vert \bm{\delta\theta}\Vert^2_{\tilde{\bbW}}}\\
    &+\frac{1}{\Vert \bm{\delta\theta}\Vert^2_{\tilde{\bbW}}}\langle \nu_S\mathcal{D}^{-1/2}\Delta^2\bm{\delta\psi}-\Lambda \mathcal{D}^{1/2}\bm{\delta\psi}, \mathcal{D}^{-1/2}f(\bm{\delta\theta})\rangle\\
    & +\frac{1}{\Vert \bm{\delta\theta}\Vert^2_{\tilde{\bbW}}}\bl(\frac{\tilde{\nu}_T}{\gamma_o}\Delta \delta T_o+\Lambda \delta T_o,g(\bm{\delta\theta})\br).
    \end{split}
\end{equation}
In order to estimate the term in the second line of \eqref{derivlambda2} we use Young's inequality to counterbalance the first part by the dissipative term, and the second part is estimated by $\Vert f(\bm{\delta\theta})\Vert^2_{H^{-1}}\le k_f^2(t)\Vert \bm{\delta\theta}\Vert^2_{\nu\bbS}$ which follow from inequalities like \eqref{ineq:k1}. The third line of \eqref{derivlambda2} is treated analogously using inequalities \eqref{ineq:k2} and \eqref{ineq:k3}. In conclusion, by definition of $\Lambda$, we get
\begin{align*}
    &\dv{\Lambda}{t}+\frac{1}{\Vert \bm{\delta\theta}\Vert^2_{\tilde{\bbW}}}\bl(|\nu_S\mathcal{D}^{-1/2}\Delta^2\bm{\delta\psi}-\Lambda \mathcal{D}^{1/2}\bm{\delta\psi}|^2+|\frac{\tilde{\nu}_T}{\gamma_o}\Delta \delta T_o +\Lambda \delta T_o|^2\br)\le 2k^2(t)\Lambda
\end{align*}
where $k^2(t)$ is integrable in time, and the result follows immediately from Gronwall's Lemma.
\end{proof}

\subsection{Dimension of the Attractor}
\label{dimattr}
In order to estimate the dimension of the attractor we have to study the evolution of infinitesimal $n$-dimensional volumes of the phase space $\bbW$ as they evolve under the flow and try to find the smallest number $n$ at which we can guarantee that all such $n$-volumes contract asymptotically in time with an exponential rate. 

We will follow the ideas presented in \cite[Chapter 6]{Temam}, and Robinson \cite[Chapter 13]{Robinson}: we start showing that the flow generated by our equations is uniformly differentiable. To do this
we first have to study the linearized dynamics of MAOOAM. 

We will denote with $(\bm{\tilde{\psi}},\tilde{T}_o)$ a generic weak solution of MAOOAM, and with $(\bm{\Psi}(t),\rom{T}_o(t))$ the solution of the linearized version of MAOOAM around $(\bm{\tilde{\psi}},\tilde{T}_o)$. We denote by $\tilde{\Lambda}(t, \bm{\tilde{\psi}}(0),\tilde{T}_o(0))$  the operator representing the evolution of $(\bm{\Psi}(0),\rom{T}_o(0))$ around$(\bm{\tilde{\psi}},\tilde{T}_o)$.

One can prove using the same techniques we have used to find the Weak and Strong solutions for MAOOAM in Sections~\ref{Chapweak} and \ref{Strsec} that $(\bm{\Psi}(t),\rom{T}_o(t))$ exists, is unique, and depends Lipschitz-continuously on the initial datum 
in the weak and strong spaces $\bbW$ and $\bbS$, defined in \eqref{defWeS}. 
\begin{prop}
    \label{undif}
    Assume that $|\partial_{T_i} R_i(T_i)|_{L^4}+|\partial^2_{T_i} R_i(T_i)|_{L^4}\le k_1+k_2|T_i|$, where $i=a,o$ and $k_1,k_2$ are positive constants. We denote the partial derivative with respect to the temperature of $R(T)$ with $\partial_T R$. Then $\tilde\Lambda(t; \bm{\tilde{\psi}}(0),\tilde{T}_o(0))$ is compact as an operator on $\bbW$ for all $t\ge0$ and the flow generated by the equations for MAOOAM is uniformly differentiable on $\mathcal{A}$ in $\bbW$.
\end{prop}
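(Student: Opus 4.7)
The plan is to verify separately (a) that the linearised evolution operator $\tilde\Lambda(t;\bm{\tilde\psi}(0),\tilde T_o(0))$ is compact on $\bbW$ for $t>0$ and (b) that the nonlinear semigroup $S(t)$ is Fréchet-differentiable at every $u_0\in\mathcal{A}$ with derivative $\tilde\Lambda(t;u_0)$, with a remainder estimate uniform in $u_0$. The essential structural fact we use throughout is that on $\mathcal{A}$ the trajectory $(\bm{\tilde\psi}(t),\tilde T_o(t))$ is uniformly bounded in $\bbS$ by \cref{existattr}, so the coefficients of the linearised system (which involve $\nabla\bm{\tilde\psi}$, $\nabla\tilde T_i$, $|\tilde T_i|^3$, $\partial_T R_i(\tilde T_i)$) are uniformly controlled.

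For (a), the linearised system is a coupled linear parabolic system with the same dissipative principal part as MAOOAM. Starting from $(\bm{\Psi}(0),\rom{T}_o(0))\in\bbW$, I would first replicate the $\bbW$-energy estimate of \cref{uniq} applied to the linearised problem, obtaining continuity of $\tilde\Lambda(t)$ on $\bbW$ with an exponentially growing Gronwall prefactor that is uniformly controlled on $\mathcal{A}$. Then I would run a parabolic smoothing argument: multiply the linearised equations by $t\Delta\bm{\Psi}$ and $-t\Delta\rom{T}_o$ respectively (as in \cref{QSsec}-\cref{Strsec}), integrate in time, and use the already-established $\bbW$-bound to absorb lower-order terms. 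This yields
\begin{equation*}
    \|(\bm{\Psi}(t),\rom{T}_o(t))\|_{\bbS}^2 \le \frac{c(T)}{t}\|(\bm{\Psi}(0),\rom{T}_o(0))\|_{\bbW}^2, \qquad t\in(0, t_*].
\end{equation*}
Since $\bbS\hookrightarrow\bbW$ compactly by Rellich-Kondrachov, $\tilde\Lambda(t)$ maps bounded sets of $\bbW$ into relatively compact sets, i.e.\ is compact on $\bbW$ for $t>0$.

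For (b), fix $u_0=(\bm{\tilde\psi}(0),\tilde T_o(0))\in\mathcal{A}$ and any $v_0\in\mathcal{A}$. Let $\bm{\delta}(t):=S(t)v_0-S(t)u_0$ denote the true increment and $\bm{\Psi}(t):=\tilde\Lambda(t;u_0)(v_0-u_0)$ the linearised one. Set $\mathbf{z}:=\bm{\delta}-\bm{\Psi}$. Subtracting the linearised equation at $u$ from the equation for $\bm{\delta}$, $\mathbf{z}$ satisfies the linearised system at $u$ plus a source term $F(\bm{\delta})$ consisting of second-order Taylor remainders: the purely quadratic Jacobian $J(\delta\psi,\Delta\delta\psi)$; the infrared remainder $|T|^3T-|\tilde T|^3\tilde T-4|\tilde T|^3\delta T$ bounded pointwise by $c(|T|+|\tilde T|)^2(\delta T)^2$; and the short-wave remainder $R_i(T)-R_i(\tilde T)-\partial_T R_i(\tilde T)\delta T$, which under the standing bound on $\partial^2_T R_i$ is controlled pointwise by $c(1+|T|+|\tilde T|)(\delta T)^2$. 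Running the $\bbW$-energy estimate of \cref{uniq} on $\mathbf{z}$ with the (uniformly integrable on $\mathcal{A}$) Gronwall prefactor, using the continuous dependence bound $\|\bm{\delta}(s)\|_{\bbW}\le C\|v_0-u_0\|_{\bbW}$, and invoking Cauchy-Schwarz and Young on the source gives
\begin{equation*}
    \|\mathbf{z}(t)\|_{\bbW}^2 \le C(t)\|v_0-u_0\|_{\bbW}^4,
\end{equation*}
hence $\|\mathbf{z}(t)\|_{\bbW}=o(\|v_0-u_0\|_{\bbW})$ uniformly in $u_0,v_0\in\mathcal{A}$, which is exactly uniform differentiability.

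The main obstacle is the quartic remainder in the ocean temperature equation: a generic weak solution does not admit pointwise $L^p_x$ control of $T_o$ for large $p$, so the mixed quartic term $(|T_o|+|\tilde T_o|)^2(\delta T_o)^2$ is not obviously integrable. The argument only closes because on $\mathcal{A}$ one has the stronger regularity $T_o,\tilde T_o\in L^\infty_t H^1_x$ inherited from the bound $\mathcal{A}\subset\bbS$, and then Sobolev embedding $H^1\hookrightarrow L^p$ for every $p<\infty$ in 2D provides the required $L^p_x$-control. The hypothesis $|\partial^2_T R_i(T)|_{L^4}\le k_1+k_2|T|$ plays the analogous role for the short-wave term, ensuring that the quadratic Taylor remainder of $R_i$ can be paired against the $H^1$-regular (hence $L^4$-regular) temperature difference to give an $L^2_x$-controllable source.
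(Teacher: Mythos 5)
Your proposal is correct and follows essentially the same route as the paper: compactness via parabolic smoothing of the linearised flow into $\bbS$ plus the compact embedding $\bbS\hookrightarrow\bbW$, and uniform differentiability by writing the remainder $\mathbf{z}=\bm{\delta}-\bm{\Psi}$ as the solution of the linearised system driven by quadratic Taylor-remainder sources (the Jacobian term $J(\delta\psi,\Delta\delta\psi)$, the Lagrange remainders of $|T|^3T$ and of $R_i$), then closing with the $\bbW$-energy/Gronwall estimate and the continuous-dependence bounds to get $\Vert\mathbf{z}(t)\Vert_{\bbW}^2\lesssim\Vert v_0-u_0\Vert_{\bbW}^4$. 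The only cosmetic difference is that the hypothesis on $\partial_T^2R_i$ is an $L^4$-norm bound rather than a pointwise one, so the short-wave remainder is paired against $\mathcal{T}_o$ via H\"older exactly as the paper does, but this does not change the argument.
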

\begin{proof}
The compactness of $\tilde\Lambda(t; \bm{\tilde{\psi}}(0),\tilde{T}_o(0))$ can be shown using the strategy we followed when we have proved the compactness of the absorbing set in Section~\ref{existattr}.

Recall that  $(\bm{\tilde{\psi}},\tilde{T}_o)$ is the fixed weak solution around which we linearize and $(\bm\Psi,\rom{T}_o)$ is the solution of the linearized equations. Denote by  $(\bm{\hat{\psi}},\hat{T}_o)$
another weak solution and by $(\bm{\delta\psi},\delta T_o)=(\bm{\tilde{\psi}}-\bm{\hat{\psi}},\tilde{T}_o-\hat{T}_o)$ the difference between the two weak solutions. We can now define 
\begin{equation}
    \label{eq:lin:var1}
    \bm{\Phi}=\bm{\delta\psi}-\bm{\Psi}, \quad \mathcal{T}_o=\delta T_o-\rom{T}_o.
\end{equation}
All these fulfill Dirichlet boundary condition. 
As usual we take the scalar product of the equations for $\bm{\Phi}$ in $\bmL^2$ with $\bm{\Phi}$ and of the equation for $\mathcal{T}_o$ in $L^2$ with $\mathcal{T}_o$.
From now on we will use the reduced norm $\Vert \bm{\Phi}\Vert_{\bmH^1}^2+|\mathcal{T}_o|^2$ instead of $\Vert (\bm\Phi, \mathcal{T}_o)\Vert^2_{\bmH_0^1\times L^2}$.
In order to prove that the flow is differentiable on $\mathcal{A}$ it is sufficient to show that 
\begin{align}
\label{undiflim}
\frac{\Vert\bm{\Phi}(t)\Vert_{\bm{H}^1}^2+|\mathcal{T}_o(t)|^2}{\Vert\bm{\delta\psi}(0)\Vert_{\bm{H}^1}^2+|\delta T_o(0)|^{2}}\le \Vert\bm{\delta\psi}(0)\Vert_{\bm{H}^1}^{2}+|\delta T_o(0)|^{2}.
\end{align}
To get the above relation it will be enough to prove the following inequality:
\begin{align}
\label{gpqz}
\pdv{}{t} \bigl(\Vert\bm{\Phi}(t)\Vert_{\bm{H}^1}^2+|\mathcal{T}_o(t)|^2\bigr)\le C_1&(\Vert\bm{\Phi}(t)\Vert_{\bm{H}^1}^2+| \mathcal{T}_o(t)|^2)\notag\\
&+C_2(\Vert\bm{\delta\psi}(t)\Vert_{\bm{H}^1}^2\Vert\bm{\delta\psi}(t)\Vert_{\bm{H}^2}^2+|\delta T_o(t)|^2|\nab\delta T_o(t)|^2),
\end{align}
where $C_1$ and $C_2$ are constants that depend only on the radii of the absorbing sets. Indeed, the integral of the terms in the second line of  \eqref{gpqz} is bounded by $\Vert\bm{\delta\psi}(0)\Vert_{\bm{H}^1}^4 +|\delta T_o(0)|^4$ as a consequence of the estimates used to show uniqueness of the weak solution in Section~\ref{uniq}. An application of Gronwall's Lemma  gives us then \eqref{undiflim}.

Let us show now  \eqref{gpqz}. We will adopt the following strategy: we isolate the terms with $\bm{\delta\psi}$ and $\delta T_o$ and we uniformly bound the ones with $\bm{\tilde{\psi}},\bm{\hat{\psi}}$, $\tilde{T}_o$ and $\hat{T}_o$ using the radii of the absorbing sets we found in Section~\ref{existattr}. 
In the equations for $\bm{\Phi}$ the main issue is to find a bound for the terms involving the Jacobian. Using the inverse of relation \eqref{Jcambcord1} we can limit ourself to Jacobians with the streamfunctions of the same type in both its arguments; the types being oceanic, baroclinic, barotropic, first and third of the atmosphere streamfunctions. 
For each of these types we can rewrite all the associated Jacobian terms as follows
\begin{align}
\label{unidifJ}
    J(\tilde{\psi},\Delta \tilde{\psi})-J(\hat{\psi},\Delta\hat{\psi})-J(\tilde{\psi},\Delta\Psi)-J(\Psi, \Delta\tilde{\psi})=J(\Phi, \Delta\tilde{\psi})+J(\tilde{\psi}, \Delta \Phi)-J(\delta\psi,\Delta\delta\psi).
\end{align}
Multiplying the above equation with $\Phi$ using inequality \eqref{estJ1} we find
\begin{align}
    &(J(\Phi,\Delta \tilde{\psi}),\Phi)=0,\notag\\
    &(J(\tilde{\psi}, \Delta \Phi),\Phi)\le c_1|\Delta\tilde{\psi}|^2|\nab \Phi|^2+ \frac{\nu_S}{16}|\Delta\Phi|^2 \leq c_2\rho_{\bbS}^2|\nab \Phi|^2 +\frac{\nu_S}{16}|\Delta\Phi|^2,\notag\\
    &(J(\delta\psi,\Delta\delta\psi),\Phi)\le c_2|\nab\delta\psi|^2|\Delta\delta\psi|^2+\frac{\nu_S}{16}|\Delta\Phi|^2.\label{eq:J3}
\end{align}
In the estimate \eqref{eq:J3}, it is essential to have the same type of $\delta\psi$ to get the form required in \eqref{gpqz}.

An analogous relation to \eqref{unidifJ} holds also for the Jacobian terms related to the temperature functions, and then similar estimates to the ones written above will hold. %

Let us now consider the infrared radiation terms: we will concentrate on the outgoing radiation for the ocean $|\tilde{T}_o|^3_{\bbR}\tilde{T}_o-|\hat{T}_o|^3_{\bbR}\hat{T}_o-4|\tilde{T}_o|^3_{\bbR}\rom{T}_o$ as it is the hardest to estimate. This quantity can be rewritten as $6\xi^2\delta T_o^2+4|\tilde{T}_o|^3\mathcal{T}_o$ applying Taylor's expansion with Lagrange's remainder, where $\xi:= \theta \tilde{T}_o +(1-\theta)\hat{T}_o$ and $\theta$ is a real function with values in $[0,1]$.
Multiplying with $\mathcal{T}_o$ we get two summands that can be bounded as follows:
\begin{align*}
    (\xi^2\delta T_o^2,\mathcal{T}_o)&\le c_4|\xi^2|_{L^4}^2|\delta T_o|^2|\nab\delta T_o|^2+\frac{\nu_T}{16}|\nab \mathcal{T}_o|^2,\\
    (|\tilde{T}_o|_{\bbR}^3\mathcal{T}_o,\mathcal{T}_o)&\le c_5|\tilde{T}_o|^6_{H^1}|\mathcal{T}_o|^2+\frac{\nu_T}{16}|\nab\mathcal{T}_o|^2,
\end{align*}
where the prefactors $|\xi^2|_{L^4}^2$ and $|\tilde{T}_o|^6_{H^1}$ can be uniformly estimated by the radii of the absorbing sets. The other three infrared radiation functions can be estimated in the same way.

Finally, it remains to estimate the short-wave radiation: using again Taylor's formula with Lagrange's remainder and the assumptions on $R_a$ and $R_o$
we get
\begin{align*}
    (R_o(\tilde{T}_o)-&R_o(\hat{T}_o)-\partial_T R_o(\tilde{T}_o)\rom{T}_o,\mathcal{T}_o)\\
    &\le c_7\big((2k_1+k_2|\xi|^2+ k_2|\tilde{T}_o|^2)|\mathcal{T}_o|^2+|\delta T_o|^2|\nab\delta T_o|^2\big)+\frac{\nu_T}{16}|\nab \mathcal{T}_o|^2,
\end{align*}
and the same holds for $R_a$. Putting all these estimates together we get inequality \eqref{gpqz}.
\end{proof}

Now we want to show that the  attractor $\mathcal{A}$ has finite fractal dimension.
Taking the positive self-adjoint operator $\mathcal{D}$ we defined in \eqref{operD}, 
we can write the linearized equations for MAOOAM in the following compact form: 
\begin{align}
    \label{teta}
   \pdv{}{t}\binom{\bm{\Psi}}{\rom{T}_o} =
     L\left(t;\bm{\tilde{\psi}}(0),\tilde{T}_o(0)\right)   
\end{align}
where 
\begin{equation}\label{eq:linearized:L}
    L\left(t;\bm{\tilde{\psi}}(0),\tilde{T}_o(0)\right) := \left(\begin{array}{c}
    -\nu_S\mathcal{D}^{-1}\Delta^2\bm{\Psi}-\mathcal{D}^{-1}L_{\Psi}(t;\bm{\tilde{\psi}}(0))\bm{\Psi},
     \\ \frac{\tilde{\nu}_T}{\gamma_o}\Delta\rom{T}_o+L_{\rom{T}_o}(t;\tilde{T}_o(0))\rom{T}_o
     \end{array} \right), 
\end{equation}
and the term $L_{\Psi}(t;\bm{\tilde{\psi}}(0))$ and $L_{\rom{T}_o}(t;\tilde{T}_o(0))$ summarise all the terms other than dissipation in the linearized equations for the streamfunctions and for the ocean temperature respectively.

In order to show that the attractor does not contain $n$-dimensional subsets, and then its dimension is smaller than $n$, the infinitesimal volumes $V_n(t)=|\bm{\delta x}^{(1)}\wedge\dots\wedge\bm{\delta x}^{(n)}|$ formed by the infinitesimal displacements $\{\bm{\delta x}^{(i)}(t)\}_{i=1,\dots,n}$ around $(\bm{\Psi},\rom{T}_o)$ must contract with negative exponential rate independently of the initial point $\bm{x}_0:=(\bm{\tilde{\psi}}(0),\tilde{T}_o(0))$ and of the initial infinitesimal $n$-volume, which is specified by $\Pi_n(0)$. 

These properties can be expressed in term of the following quantity. Denote $\langle g\rangle$ to be the asymptotic time average $\limsup_{t\rightarrow \infty}\frac{1}{t}\int_0^t g(s)ds$, $\text{Tr}$ the trace in $\bbW$ and $\Pi_n(t)$ the orthogonal projection in $\bbW$ on the space spanned by $\{\bm{\delta x}^{(i)}(t)\}_{i=1,\dots,n}$. Then we define
\begin{equation}\label{eq:def::calTr}
    \mathcal{TR}_n(\mathcal{A}):=\sup_{\bm{x}_0\in \mathcal{A}}\sup_{\Pi_n(0)}\langle\text{Tr}(L(\bm{x}_0)\Pi_n )\rangle.
\end{equation}
The sign of such a quantity heuristically represents the decay or growth rate of the infinitesimal volume $V_n(t)$. We can now show the following:
\begin{lem}\label{lemma:tr<0}
Let $\bm{\Phi}$ and $\mathcal{T}_o$ be as in \eqref{eq:lin:var1} and $L$ as in \eqref{eq:linearized:L}. Then there exists $n_*$ such that $ \mathcal{TR}_n(\mathcal{A}) < 0$ for all $n >n_*$.
\end{lem}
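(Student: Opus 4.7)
The plan is to implement the standard Constantin--Foias--Temam trace argument adapted to our weighted space $\tilde{\bbW}$, showing that the dissipative contribution to the trace grows like $n^2$ while the perturbation grows at most linearly in $n$. Fix $\bm{x}_0\in \mathcal{A}$ and $t>0$ and choose $\{\chi_j=(\bm{\Psi}_j,\rom{T}_{o,j})\}_{j=1}^n$ an orthonormal basis of $\Pi_n(t)\tilde{\bbW}$ with respect to $\langle\cdot,\cdot\rangle_{\tilde{\bbW}}$ (the inner product associated with \eqref{deftildeW}). Since the trace is basis-independent,
\[
\mathrm{Tr}(L(\bm{x}_0)\Pi_n)(t)=\sum_{j=1}^n \langle L(\bm{x}_0)\chi_j,\chi_j\rangle_{\tilde{\bbW}}.
\]
The first advantage of working with $\tilde{\bbW}$ is that the diffusive part in \eqref{eq:linearized:L} simplifies: since $\mathcal{D}$ is self-adjoint and commutes with itself, $\langle -\nu_S\mathcal{D}^{-1}\Delta^2\bm{\Psi}_j,\bm{\Psi}_j\rangle_{(\mathcal{D}\cdot,\cdot)}=-\nu_S|\Delta\bm{\Psi}_j|^2$, giving the dissipative contribution $-\sum_{j=1}^n(\nu_S|\Delta\bm{\Psi}_j|^2+\tfrac{\tilde\nu_T}{\gamma_o}|\nabla\rom{T}_{o,j}|^2)$.

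Next I bound the perturbation $\sum_j\langle \mathcal{D}^{-1}L_{\Psi}\bm{\Psi}_j,\bm{\Psi}_j\rangle_{(\mathcal{D}\cdot,\cdot)}+\sum_j(L_{\rom{T}_o}\rom{T}_{o,j},\rom{T}_{o,j})$. The relevant contributions come from: (i) the Jacobian pairs $J(\bm{\tilde\psi},\Delta\bm{\Psi}_j)$, $J(\bm{\Psi}_j,\Delta\bm{\tilde\psi})$ and their temperature analogues, which by \eqref{estJ1}--\eqref{estJ2} and Young give a bound of the form $\tfrac{1}{4}\nu_S|\Delta\bm{\Psi}_j|^2+C\Vert\bm{\tilde\psi}\Vert_{\bmH^2}^2|\nabla\bm{\Psi}_j|^2$; (ii) the $\beta$-term and linear friction matrix, which are bounded by $C\Vert\bm{\Psi}_j\Vert_{\bmH^1}^2$; (iii) the $\omega$-coupling, eliminated as in \cref{sectenergy} by substitution from the linearized $T_a$-equation, which produces radiation and transport corrections bounded using the Lipschitz assumptions on $R_a,R_o$ and estimates such as \eqref{ineq:k1}--\eqref{ineq:k3}; (iv) the linearized quartic radiation terms $-4|\tilde T|^3\rom{T}$, controlled via H\"older and the embedding $H^1\hookrightarrow L^p$ by $C(1+\Vert\tilde T\Vert_{H^1}^6)(|\nabla\rom{T}_o|^2+|\rom{T}_{o,j}|^2)$ after Young. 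Summing and absorbing halves of the diffusive terms yields
\[
\mathrm{Tr}(L\Pi_n)(t)\leq -\tfrac{1}{2}\sum_{j=1}^n\!\left(\nu_S|\Delta\bm{\Psi}_j|^2+\tfrac{\tilde\nu_T}{\gamma_o}|\nabla\rom{T}_{o,j}|^2\right)+\mathcal{K}(t)\sum_{j=1}^n\left(|\nabla\bm{\Psi}_j|^2+|\rom{T}_{o,j}|^2\right),
\]
where $\mathcal{K}(t)$ is a polynomial in $\Vert\bm{\tilde\psi}(t)\Vert_{\bmH^2}$ and $\Vert\tilde T_o(t)\Vert_{H^1}$. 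Crucially, since $\mathcal{A}\subset\bbS$ by \cref{existattr}, $\mathcal{K}(t)$ is uniformly bounded on $\mathcal{A}$ by some constant $K=K(\rho_{\bbW},\rho_{\bbQS},\rho_{\bbS})$. Orthonormality in $\tilde{\bbW}$ then gives $\sum_j(|\nabla\bm{\Psi}_j|^2+|\rom{T}_{o,j}|^2)\leq c\,n$.

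To bound the dissipative sum from below, I invoke the min--max principle for the self-adjoint operator associated with the quadratic form $\chi\mapsto \nu_S|\Delta\bm{\Psi}|^2+\tfrac{\tilde\nu_T}{\gamma_o}|\nabla\rom{T}_o|^2$ on $\tilde{\bbW}$: on any orthonormal family one has $\sum_{j=1}^n\!\left(\nu_S|\Delta\bm{\Psi}_j|^2+\tfrac{\tilde\nu_T}{\gamma_o}|\nabla\rom{T}_{o,j}|^2\right)\geq\sum_{j=1}^n\mu_j$, where $\{\mu_j\}$ are the eigenvalues of this form in $\tilde{\bbW}$. A direct calculation on the separate blocks shows $\mu_j$ is comparable to the $j$-th Dirichlet eigenvalue of $-\Delta$ on $\Omega$, so by Weyl's law in two dimensions $\sum_{j=1}^n\mu_j\geq c'n^2$ for some $c'>0$ depending on $\nu_S,\tilde\nu_T,|\Omega|$. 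Combining the two estimates and time-averaging along the trajectory (which by invariance remains in $\mathcal{A}$) yields
\[
\mathcal{TR}_n(\mathcal{A})\leq -\tfrac{c'}{2}\,n^2+cKn,
\]
which is strictly negative as soon as $n>n_*:=\lceil 2cK/c'\rceil$.

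The main obstacle is the perturbation step: the cross-coupling via $\omega$ forces the linearized streamfunction equation to inherit the quartic radiation nonlinearity and the advection $J(\psi_a^t,T_a)$, and these terms can only be dominated by the dissipation if the reference trajectory is controlled in $\bmH^2\times H^1$ rather than merely in $\bmH_0^1\times L^2$. This is precisely the reason the estimate must be performed on $\mathcal{A}$, where the strong absorbing set constructed in \cref{existattr} supplies the needed uniform bounds on $\mathcal{K}(t)$.
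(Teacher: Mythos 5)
Your argument is correct and follows the same overall architecture as the paper's proof: an orthonormal family with respect to the $\tilde{\bbW}$ inner product, the splitting of $\mathrm{Tr}(L\Pi_n)$ into a dissipative part and a perturbation, the $O(n)$ bound on the perturbation via the uniform $\bbS$-bounds on $\mathcal{A}$ together with $\sum_j\Vert(\bm{\phi}_j,\tau_{o,j})\Vert^2_{\tilde{\bbW}}=n$ (absorbing the higher-order pieces of the Jacobian and radiation terms into half of the dissipation), and finally $-c_1n^2+c_2n<0$ for $n$ large. The one place where you take a genuinely different route is the $n^2$ lower bound on the dissipative sum: the paper introduces the pointwise density $\rho(x,y)$ built from the orthonormal family and applies the Sobolev--Lieb--Thirring inequality, whereas you invoke the min--max principle for the block-diagonal quadratic form $\nu_S|\Delta\bm{\Psi}|^2+\tfrac{\tilde{\nu}_T}{\gamma_o}|\nabla\rom{T}_o|^2$ relative to $\tilde{\bbW}$ and then Weyl's law ($\mu_j\gtrsim j$ in two dimensions). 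Both are standard and both deliver $\sum_j\mu_j\geq c'n^2$; your route is the more elementary one (it is essentially Lemma VI.2.1 in Temam, or the version used in Robinson's Chapter 13), while Lieb--Thirring is typically preferred when one wants sharper, parameter-explicit dimension estimates. Since the paper only claims finiteness of the fractal dimension and does not optimise the bound on $n_*$, nothing is lost by your choice. Two small points worth tidying: the eigenvalues of the streamfunction block are those of the generalized problem $\nu_S\Delta^2\phi=\mu\,\mathcal{D}\phi$, so one should note explicitly that $\mu_k=\nu_S\lambda_k^2/(\lambda_k+c)\sim\nu_S\lambda_k$ before appealing to Weyl; and in your item (iv) the gradient factor should read $|\nabla\rom{T}_{o,j}|^2$ so that it is visibly absorbed by the temperature dissipation.
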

\begin{proof}
We take a set of functions $\{(\bm{\phi}_i,\tau_{o,i})\}_i$ in $\bbW$ orthonormal with respect to the scalar product defined by the operator $\mathcal{D}$, which we have previously denoted with $\tilde{\bbW}$, see \eqref{deftildeW}.
If we define 
\begin{align*}
\rho(x,y):=\sum_{j=1}^n (|\nab\phi^t_j(x,y)|^2_{\bbR^2}&+|\nab\phi^c_j(x,y)|^2_{\bbR^2}+\frac{2f_0^2}{p_{\delta}^2\sigma}|\phi^c_j(x,y)|^2_{\bbR}\\&+|\nab\phi_{o,j}(x,y)|^2_{\bbR^2}+\frac{1}{L^2_R}|\phi_{o,j}(x,y)|^2_{\bbR}+|\tau_{o,j}(x,y)|_{\bbR}^2),
\end{align*}
using the orthonormality of $\bm\phi_j$ and $\tau_j$ we immediately get
$\int_{\Omega} \rho(x,y)\ d\tilde{A}=n$.
We start computing the trace of the diffusive terms. Defining $\nu=\min\{\nu_S,\frac{\tilde{\nu}_T}{\gamma_o}\}$  
and using the Sobolev-Lieb-Thirring inequality for $\rho$ (see Temam \cite[Appendix 4.2]{Temam})  we get for some constants $\tilde{k}_1$ and $\tilde{k}_2$
\[ 
    \sum_{j=1}^n \bl\langle\mathcal{D}^{-1}(-\nu_S\Delta^2 \bm{\phi}_j, \frac{\tilde{\nu}_T}{\gamma_o}\Delta\tau_{o,j}),(\bm{\phi},\tau_o)_j\br\rangle_{\tilde{\bbW}}\le -\nu\frac{n^2}{\tilde{k}_2}+ \nu\frac{\tilde{k}_1}{\tilde{k}_2}n.
\]
Now we want to show that the trace of the other terms are of order $n$, as then for $n$ large enough $\langle\text{Tr}(L(\bm{x}_0)\Pi_n\rangle<0$. 
As a prototype for the Jacobian terms let us consider  the following:
\begin{align*}
    &\sum_{j=1}^n \bl\langle \bl(\frac{2f_0^2}{p_{\delta}^2\sigma}-\Delta\br)^{-1}J(\phi_j^{t},\Delta\psi^c),\phi_j^{c}\br\rangle_{\tilde{\bbW}}=\sum_{j=1}^n( J(\phi_j^{t},\Delta\psi^c),\phi_j^{c})\\
    &\le \sum_{j=1}^n \left( |\nab\phi_j^{t}||\Delta\phi_j^{t}||\nab\phi_j^{c}||\Delta\phi_j^{c}|\right)^{1/2} |\Delta\psi^c|
    \le C_1\rho_{\bbS}^2n+\frac{\nu}{8}\sum_{j=1}^n(|\Delta\phi_j^{t}|^2+|\Delta\phi_j^{c}|^2),
\end{align*}
where we used H\"older, Ladyzhenskaya, uniform bounds on the attractor and $\sum_{j=1}^n \Vert\bm{\phi}_j\Vert_{\bm{H}^1}^2\le n$.
All the other Jacobian terms can be treated with similar techniques.

For the radiation terms the estimates are even simpler. We can take as prototypes the ones for $T_o$: the infrared radiation can be estimated as follows
\begin{align*}
\sum_{j=1}^n(|T_o|^3_{\bbR}\tau_{o,j},\tau_{o,j})\le |T_o|^3_{L^6}\sum_{j=1}^n|\tau_{o,j}|^2_{L^4}
\le C_3\rho_{\bbQS}^6n+\frac{\nu}{8}\sum_{j=1}^n|\nab\tau_{o,j}|^2.
\end{align*}
Assuming $|\partial_T R_o(T_o)|\le k_1+k_2|T_o|$ we can control the short-wave radiation by
\begin{align*}
    &\sum_{j=1}^n (\partial_T R_o(T_o)\tau_{o,j}, \tau_{o,j})\le C_4|\partial_T R_o(T_o)|^2\sum_{j=1}^n|\tau_{o,j}|^2_{L^4},
\end{align*}
which can be bounded as above. 
Collecting all the estimates together we get \begin{align*}
\text{Tr}(L(t;\bm{\psi},T_o)\Pi_n(t))\le \sum_{j=1}^n \langle\mathcal{D}^{-1}(-\nu_S\Delta^2 \bm{\phi}_j, \frac{\tilde{\nu}_T}{\gamma_o}\Delta\tau_{o,j}),(\bm{\phi},\tau_o)_j\rangle_{\tilde{\bbW}}+ c_0n\le -c_1n^2+c_2n,
\end{align*}
then choosing $n>c_2/c_1=: n_*$ one gets $\langle\text{Tr}(L(\bm{x}_0)\Pi_n)\rangle<0$ for all initial points in the attractor and as required.
\end{proof}

Summarizing, thanks to \cref{lemma:tr<0} together with \cref{undif} the fractal dimension of the attractor is finite since $\mathcal{A}$ cannot contain $n$-dimensional subsets. This implication is made rigorous for example in \cite[Theorem 13.16]{Robinson} and for our model then we showed the following:
\begin{thm}
    Let $R_a$ and $R_o$ be such that 
    \begin{equation}
        |\partial_{T_i} R_i(T_i)|_{L^4}+|\partial^2_{T_i} R_i(T_i)|_{L^4}\le k_1+k_2|T_i|, \quad i=a,o
    \end{equation}
     for some $k_1,k_2$ positive constants. Then the global attractor for MAOOAM has finite fractal dimension.
\end{thm}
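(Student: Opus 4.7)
The proof will be essentially a direct assembly of the machinery already developed in Sections~\ref{existattr}, \ref{injectsect} and \ref{dimattr}, combined with the standard Constantin--Foias--Temam volume-contraction framework as packaged, for instance, in \cite[Theorem 13.16]{Robinson}. The plan is to verify the three hypotheses of that general criterion for the semigroup $\{S(t)\}_{t\ge 0}$ acting on $\bbW$: (i) existence of a compact global attractor $\mathcal{A}$, (ii) uniform differentiability of $S(t)$ on $\mathcal{A}$ with compact linearisation, and (iii) negativity of the asymptotic trace $\mathcal{TR}_n(\mathcal{A})$ defined in \eqref{eq:def::calTr} for all $n$ larger than some threshold $n_*$.

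Item (i) is exactly the content of the existence theorem for $\mathcal{A}$ proved in \cref{existattr}, using the absorbing set constructed from the energy inequality \ref{enes1} and the compact embedding $\bbS \hookrightarrow \bbW$. Item (ii) follows from \cref{undif}: the hypotheses on $R_a, R_o$ assumed in the theorem are exactly those required there, so the linearised operator $\tilde\Lambda(t;\bm{\tilde\psi}(0),\tilde T_o(0))$ is compact on $\bbW$ and $S(t)$ is uniformly differentiable on $\mathcal{A}$ with derivative given by the solution operator of \eqref{teta}. Item (iii) is precisely the content of \cref{lemma:tr<0}, where the Sobolev--Lieb--Thirring inequality applied to the density $\rho(x,y)$ associated with an $\tilde\bbW$-orthonormal family produced the essential estimate
\[
  \mathrm{Tr}\bigl(L(t;\bm{\tilde\psi},\tilde T_o)\Pi_n(t)\bigr)\le -c_1 n^2 + c_2 n,
\]
so that choosing $n_* = \lceil c_2/c_1\rceil$ makes $\mathcal{TR}_n(\mathcal{A})<0$ for every $n>n_*$.

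With all three ingredients in place, \cite[Theorem 13.16]{Robinson} yields an explicit bound on the fractal (and Hausdorff) dimension of $\mathcal{A}$ in terms of the first integer $n$ for which the time-averaged sum of the global Lyapunov exponents becomes strictly negative; concretely,
\[
  \dim_f(\mathcal{A}) \le n_* \Big(1 + \max_{1\le j\le n_*}\frac{(\mathcal{TR}_j(\mathcal{A}))_+}{|\mathcal{TR}_{n_*+1}(\mathcal{A})|}\Big),
\]
which is finite. The injectivity of $S(t)$ on $\mathcal{A}$ established in \cref{injectsect} is not strictly needed for the dimension bound itself, but it ensures that the bound really is a bound on the dimension of the invariant set on which the semigroup extends to a genuine dynamical system.

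The hard part of the whole argument has already been absorbed into \cref{lemma:tr<0}, where one must collect the Jacobian, long-wave and short-wave radiation contributions and show that each of them scales at most linearly in $n$, so that the quadratic gain from the Sobolev--Lieb--Thirring estimate dominates; the hypothesis $|\partial_{T_i}R_i|_{L^4}+|\partial^2_{T_i}R_i|_{L^4}\le k_1+k_2|T_i|$ is exactly what is needed there (and in \cref{undif}) to control the short-wave radiation linearisation uniformly on $\mathcal{A}$ via the uniform bounds $\rho_{\bbW},\rho_{\bbQS},\rho_{\bbS}$ of \cref{existattr}. Thus the proof of the theorem reduces to a short citation of the three results above together with the general abstract criterion.
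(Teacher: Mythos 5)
Your proposal is correct and follows essentially the same route as the paper: the theorem is obtained by combining the existence of the compact attractor from \cref{existattr}, the uniform differentiability and compactness of the linearisation from \cref{undif}, and the trace estimate $\mathcal{TR}_n(\mathcal{A})<0$ for $n>n_*$ from \cref{lemma:tr<0}, then invoking the abstract criterion of \cite[Theorem 13.16]{Robinson}. The only cosmetic difference is that you spell out an explicit dimension bound, whereas the paper simply concludes finiteness.
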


\section{Determining Modes}
\label{detmod}
In this final section we want to understand whether there exists a finite number of degrees of freedom, frequently called in literature \textit{determining modes}, for the semidynamical system $((\bmH_0^1 \times L^2)(\Omega),\ \{S(t)\}_{t\ge0})$, which determine in a unique way the asymptotic behaviour of the system if we consider trajectories on the attractor.
 Using the framework given in \cite{Chueshov98} we show that there exists a finite number of asymptotically determining functionals for the MAOOAM system. Moreover, we can also prove that we can choose these functionals in such a way that they do not depend on the ocean temperature. 
To show the latter, we have to modify slightly the framework described in \cite{Chueshov98} in order to cover the case of a set of determining functionals that do not depend on one (or more) component of the solution (in our case $T_o$). 

Following \cite{Chueshov98} we do not provide explicitly the asymptotic determining functionals for our model, but we give a sufficient condition for a family to be asymptotically determining for our semidynamical system. More precisely, we consider a family $\mathfrak{L}=\{l_j\ :\ j=1,\dots N\}$, where $l_j$ are linear functionals on $\bbW$ which have the property that   
\begin{multline}
    \label{detfunPoinc1}\Vert\bm{\delta\psi}\Vert_{\bm{H}^1}^2+\kappa\frac{|\delta\psi_o|^2}{L^2_R}+\mu\gamma_a|\delta T_a|^2\le \varepsilon^2_{\mathfrak{L}}\bl(\frac{\nu_S}{2}\Vert\bm{\delta\psi}\Vert_{\bm{H}^2}^2+\frac{r\kappa}{2}|\nab\delta\psi_o|^2+\frac{k'_d R^2}{4f_0^2}|\nab\delta T_a|^2\br)\\+C_{\mathfrak{L}}\max_{j=1,\dots, N}|l_j(\bm{\delta\psi})|^2,
\end{multline}
for two constants $C_{\mathfrak{L}}$ and $\varepsilon_{\mathfrak{L}}$. We will show in \cref{teodetfunct} that when $\varepsilon_{\mathfrak{L}}$ is small enough, i.e. \eqref{small} below, then such $\mathfrak{L}$ are a family of asymptotically determining functionals for our semidynamical system. The condition \eqref{detfunPoinc1} differs from those used in \cite{Chueshov98} as it does not depend on one of the variables of the system, namely the ocean temperature does not appear.

 Typical examples of such functionals are projections on eigenfunctions of the Laplacian, local volume averages or functions evaluated in specific points, see \cite{Chueshov98} for more details. 
In our case \eqref{detfunPoinc1} can be easily shown for example for $\mathfrak{L}$ the family of projections on the first $N$ eigenfunctions of the Laplacian. Moreover when choosing $N$ large enough $\varepsilon_\mathfrak{L}$ becomes small enough to satisfy \eqref{small} and hence for those $N$ we obtain a family of asymptotically determining functionals.

\begin{thm} 
    \label{teodetfunct}
    Assume that the hypotheses ensuring the existence and uniqueness of the Strong Solution are fulfilled and assume that $R_o$ satisfies the bound 
    \begin{align} 
        \label{stimaLlambda}
        (R_o(\tilde{T}_o)-R_o(\hat{T}_o),\delta T_o)\le \mathcal{L}_{R_o}|\delta T_o|^2,
    \end{align}
    with the constant $\mathcal{L}_{R_o}$ such that $\mathcal{L}_{R_o} < \lambda $. Then there exists $\varepsilon^*$ such that if \eqref{detfunPoinc1} holds for some $\varepsilon_{\mathfrak{L}}\in (0, \varepsilon^*)$, then $\mathfrak{L}$ is a set of asymptotically
    determining functionals for MAOOAM, which can be taken independent of the ocean temperature. More precisely if
    \[ 
        \lim_{t\rightarrow\infty} \int_t^{t+1}|l_j(\bm{\delta\psi}(\tau))|^2d\tau=0, \quad j=1,\dots N
    \] 
    then 
    \[ \lim_{t\rightarrow\infty}\Vert(\bm{\delta\psi},\delta T_o)(t)\Vert^2_{\bmH_0^1\times L^2}=0.\]
\end{thm}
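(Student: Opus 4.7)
The plan is to adapt the Foias--Prodi argument in the framework of \cite{Chueshov98} to the fact that the observational inequality \eqref{detfunPoinc1} does not involve the ocean temperature. Fix two weak solutions $(\bm{\tilde\psi},\tilde T_o)$ and $(\bm{\hat\psi},\hat T_o)$ starting on the attractor, so that the strong-solution bounds established in Section~\ref{existattr} are available uniformly in $t$, and write the equations for the differences $(\bm{\delta\psi},\delta T_o)$. Setting $y(t):=\Vert(\bm{\delta\psi},\delta T_o)(t)\Vert^2_{\bmH_0^1\times L^2}$, the target is a differential inequality of the form
\[
\frac{d}{dt}y(t) + \gamma(t)\,y(t) \le C_{\mathfrak L}\max_{j=1,\dots,N}|l_j(\bm{\delta\psi}(t))|^2,
\]
in which the coefficient $\gamma$ has a strictly positive time-averaged lower bound and a uniformly integrable negative part. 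A generalised Gronwall lemma as used in \cite{Chueshov98} then gives $y(t)\to 0$ whenever the integrated observations vanish, which is the claim.

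First I would reproduce the energy identity of Section~\ref{uniq} for the difference, but this time keeping on the left-hand side the entire dissipation, together with the non-negative infrared contributions, namely
\[
\nu_S\Vert\bm{\delta\psi}\Vert_{\bmH^2}^2 + r\kappa|\nabla\delta\psi_o|^2 + 2k'_d|\nabla\delta\psi_a^c|^2 + \mu\tilde\nu_T(|\nabla\delta T_a|^2+|\nabla\delta T_o|^2) + \lambda|\delta T_a-\delta T_o|^2.
\]
The nonlinear remainders, namely the Jacobian differences, the mixed quartic term $(|\tilde T_o|^3\tilde T_o-|\hat T_o|^3\hat T_o,\delta T_a)$ and the short-wave contributions, are estimated as in Section~\ref{uniq} through \eqref{estJ1}--\eqref{estJ2}, the interpolation argument \eqref{sviluppo}--\eqref{stimaTo}, and hypothesis \eqref{stimaLlambda}. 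Their prefactors involve quantities such as $\Vert\hat{\bm\psi}\Vert_{\bmH^2}^2$, $|\hat T_o|_{L^8}^3$ or $\Vert\tilde T_a\Vert_{H^2}^2$, all of which are uniformly controlled on unit time intervals by the strong-solution estimates of Section~\ref{existattr}; the resulting pollution is a term of the form $B(t)\,y(t)$ with $B$ uniformly integrable over intervals of length one.

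The core of the argument, and the step I expect to be the main obstacle, is recovering coercivity in $|\delta T_o|^2$, which is missing from the right-hand side of \eqref{detfunPoinc1}. Once \eqref{detfunPoinc1} is applied, one dominates $\Vert\bm{\delta\psi}\Vert_{\bmH^1}^2+\kappa|\delta\psi_o|^2/L_R^2+\mu\gamma_a|\delta T_a|^2$ by a chosen fraction of the dissipation plus $C_{\mathfrak L}\max_j|l_j(\bm{\delta\psi})|^2$. To produce an analogous bound on $\mu\gamma_o|\delta T_o|^2$ I would expand
\[
\lambda|\delta T_a-\delta T_o|^2 = \lambda|\delta T_o|^2+\lambda|\delta T_a|^2-2\lambda(\delta T_a,\delta T_o),
\]
combine it with $(R_o(\tilde T_o)-R_o(\hat T_o),\delta T_o)\le\mathcal L_{R_o}|\delta T_o|^2$ from \eqref{stimaLlambda}, and apply Young's inequality to the cross term with a weight small enough that the residual coefficient in front of $|\delta T_o|^2$ remains strictly positive. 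This is possible precisely because $\mathcal L_{R_o}<\lambda$ is strict; the leftover contribution in $|\delta T_a|^2$ is absorbed into the already available $k'_d|\nabla\delta T_a|^2$ via Poincaré, and the quantitative threshold obtained here is what dictates the smallness parameter $\varepsilon^*$.

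Combining the two steps yields
\[
\frac{d}{dt}y+\bigl(\alpha-\varepsilon_{\mathfrak L}^2\tilde\alpha-B(t)\bigr)\,y \le C_{\mathfrak L}\max_j|l_j(\bm{\delta\psi})|^2
\]
with $\alpha,\tilde\alpha>0$ explicit. Choosing $\varepsilon^*$ so that $\alpha-(\varepsilon^*)^2\tilde\alpha$ exceeds the uniform time average of $B$ on the attractor guarantees that $\gamma(t):=\alpha-\varepsilon_{\mathfrak L}^2\tilde\alpha-B(t)$ has a strictly positive lower bound for its unit-interval averages, and the generalised Gronwall lemma of \cite{Chueshov98} closes the argument. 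Since the $l_j$ act only on $\bm{\delta\psi}$ throughout, the family $\mathfrak L$ can indeed be chosen independent of the ocean temperature, which is the second claim of the theorem.
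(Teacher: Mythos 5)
Your skeleton matches the paper's proof: energy identity for the difference, coercivity in $|\delta T_o|^2$ extracted from $\lambda|\delta T_a-\delta T_o|^2$ together with $\mathcal{L}_{R_o}<\lambda$ (your Young-inequality expansion is exactly the paper's bound $\lambda|\delta T_a-\delta T_o|^2\ge \lambda(1-\tilde\varepsilon)|\delta T_o|^2-\tfrac{\lambda(1-\tilde\varepsilon)}{\tilde\varepsilon}|\delta T_a|^2$), the observational inequality \eqref{detfunPoinc1} to absorb everything proportional to streamfunction-type quantities, and a Gronwall-type conclusion. However, there is a genuine gap in the step you dismiss as routine: you cannot estimate the nonlinear couplings ``as in Section~\ref{uniq}''. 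The whole point of your (and the paper's) strategy is that large constants in front of $\Vert\bm{\delta\psi}\Vert_{\bmH^1}^2+\mu\gamma_a|\delta T_a|^2$ are harmless, because $\varepsilon_{\mathfrak{L}}^{-2}$ can be made arbitrarily large, whereas the only damping available for $|\delta T_o|^2$ is the fixed, non-tunable constant $\mu\lambda\tilde\varepsilon$ (plus $\mu\tilde\nu_T\lambda_1$), and there is no observational term involving $\delta T_o$. The Section~\ref{uniq} estimates violate this: \eqref{stimaTo} produces $C|\hat T_o|_{L^8}^3|\delta T_o|^2$, and the Jacobian bound via \eqref{estJ2} likewise ends with a large state-dependent prefactor multiplying the full weak norm, hence $|\delta T_o|^2$. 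A term $B(t)|\delta T_o|^2$ with $B$ merely locally integrable (or bounded by the huge absorbing-set radii) cannot be beaten by $\mu\lambda\tilde\varepsilon$, and no choice of $\varepsilon_{\mathfrak{L}}$ helps. The paper therefore re-derives these estimates so that all large constants land on $|\nabla\delta\psi_o|^2$, $|\Delta\delta\psi_o|^2$ or $|\nabla\delta\psi_a^c|^2$, leaving only fractions of $\nu_T|\nabla\delta T_o|^2$ on the temperature side; e.g.\ $\epsilon_a\sigma_B(|\tilde T_o|^3_{\bbR}\tilde T_o-|\hat T_o|^3_{\bbR}\hat T_o,\delta T_a)\le c\rho_{\bbQS}^6|\nabla\delta\psi_a^c|^2+\tfrac{\nu_T}{8}|\nabla\delta T_o|^2$, which uses the uniform $H^1$ bound on $T_o$ available on the attractor (this is where the strong-solution hypotheses actually enter). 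Without this redistribution your differential inequality does not close.

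A secondary, more cosmetic issue: your final display has damping $\alpha-\varepsilon_{\mathfrak{L}}^2\tilde\alpha-B(t)$, so that shrinking $\varepsilon_{\mathfrak{L}}$ only removes a penalty and the damping saturates at $\alpha$. The correct scaling, which your prose suggests you intend, is that dividing \eqref{detfunPoinc1} by $\varepsilon_{\mathfrak{L}}^2$ converts the reserved fraction of the $\bmH^2$-dissipation into a damping of order $\varepsilon_{\mathfrak{L}}^{-2}$ on the streamfunction part of the norm; the smallness condition is then $\varepsilon_{\mathfrak{L}}^{-2}+\varsigma-C(\rho)>0$ as in \eqref{small}, not a bound on $\alpha-(\varepsilon^*)^2\tilde\alpha$.
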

Again note that \eqref{stimaLlambda} is in particular fulfilled if $R_o$ is Lipschitz with constant $\mathcal{L}_{R_o}$ but the requirement \eqref{stimaLlambda} allows for more general functions in principle. 

\begin{proof}
We will work again with the equations for the differences $\bm{\delta\psi}$ and $\delta T_o$ of two weak solutions we derived in Section~\ref{uniq}.
As we are deriving an inequality which holds for all $t$ after a fixed time $t_0$, we can work on the absorbing sets, and the idea we follow is that we want to estimate the various terms of the equation in a way to obtain small coefficients in front of $\delta T_o$ and big coefficients in front of $\bm{\delta\psi}$.
We need to avoid big constants in front of $|\delta T_o|^2$, since we cannot balance them using \eqref{detfunPoinc1}. However, for small enough $\varepsilon_{\mathfrak{L}}$ we can counterbalance all big constants in front of $\bm{\delta\psi}$.
As  we do not care about the exact value of the constants in front of terms with $\Vert\bm{\delta\psi}\Vert_{\bm{H}^1}^2$, we will estimate the nonlinear transport terms in the equations for $\bm{\delta\psi}$ with 
\[ C(\rho)\Vert\bm{\delta\psi}\Vert_{\bm{H}^1}^2+\frac{\nu_S}{2}\Vert\bm{\delta\psi}\Vert_{\bm{H}^2}^2,\]
where $C(\rho)$ is a  (potentially very big) constant that depends on $\rho=\max\{\rho_{\bbW},\rho_{\bbS}\}$.

Let us focus now on the terms which depend on the temperature. 
We start with the transport term $(J(\delta\psi_o,\tilde{T}_o),\delta T_o)$: using H\"older, Ladyzhenskaya and twice Young's inequalities we find
\begin{align*}
(J(\delta\psi_o,\tilde{T}_o),\delta T_o)
&\le c|\nab \tilde{T}_o||\tilde{T}_o||\nab\delta\psi_o||\Delta\delta \psi_o|+\frac{\nu_T}{2}|\nab \delta T_o|^2\\
&\le c\rho_{\bbW}^2\rho_{\bbQS}^2|\nab \delta\psi_o|^2+\frac{\kappa\nu_S}{4}|\Delta\delta \psi_o|^2+\frac{\nu_T}{2}|\nab \delta T_o|^2.
\end{align*} 
The term related to heat exchange on the ocean-atmosphere interface gives us a damping term for any choice of  $\tilde{\varepsilon}\in (0,1)$, using
\[ \lambda|\delta T_a-\delta T_o|^2\ge \lambda(1-\tilde{\varepsilon})|\delta T_o|^2-\frac{\lambda(1-\tilde{\varepsilon})}{\tilde{\varepsilon}}|\delta T_a|^2.\]
Next, let us consider  the terms related to the infrared radiation:
the ones describing the outgoing radiation give rise to positive quantities, which we can neglect, while the remaining  two can be controlled using that $\tilde{T}_o,\hat{T}_o$ belong to the absorbing set of radius $\rho_{\bbQS}$, in the following way:
\begin{align*}
\epsilon_a\sigma_B(|\tilde{T}_o|^3_{\bbR}\tilde{T}_o-|\hat{T}_o|^3_{\bbR}\hat{T}_o,\delta T_a)
&\le c(|\tilde{T}_o|^3_{H^1}+|\hat{T}_o|^3_{H^1})|\nab \delta T_a||\nab\delta T_o|\\
&\le c\rho_{\bbQS}^6|\nab\delta\psi^c|^2+\frac{\nu_T}{8}|\nab\delta T_o|^2,
\end{align*}
and a similar estimate holds for the other one.

Finally, thanks to these derived estimates, inequality \eqref{stimaLlambda} for $R_o(T_o)$ and \eqref{detfunPoinc1} we find
\begin{align}  \label{rrrrrrrrr}
\begin{split}
    \frac{1}{2}\pdv{}{t}\biggl(\Vert\bm{\delta\psi}\Vert_{\bm{H}^1}^2&+\kappa\frac{|\delta\psi_o|^2}{L^2_R}+\mu(\gamma_a|\delta T_a|^2+\gamma_o|\delta T_o|^2)\biggr)\\
    +\bigl(\varepsilon_{\mathfrak{L}}^{-2}&+\varsigma-C(\rho)\bigr)\bl(\Vert\bm{\delta\psi}\Vert_{\bm{H}^1}^2+\kappa\frac{|\delta\psi_o|^2}{L^2_R}+\mu\gamma_a|\delta T_a|^2\br)\\
    &+\mu\left(\lambda(1- \tilde{\varepsilon})-\mathcal{L}_{R_o}\right)|\delta T_o|^2\le C_{\mathfrak{L}}\max_{j=1,\dots, N}|l_j(\bm{\delta\psi})|^2,
    \end{split}
\end{align}
where 
\[
    \varsigma:=\min\bl\{\frac{k_d}{2},k'_d, \frac{k'_d\lambda_1R^2}{4f_0^2\mu\gamma_a},\frac{r}{2},\frac{rL^2_R\lambda_1}{2}\br\}
\]
comes from the frictions terms.
We chose $\tilde{\varepsilon}$ so that $\mathcal{L}_{R_o} \leq \lambda(1- 2\tilde{\varepsilon})$, then the prefactor of $|\delta T_o|^2$ is non-negative as it is bounded below by $\mu \lambda\tilde{\varepsilon}$. Last, we see that the condition 
\begin{align}
    \label{small}
    \varepsilon_{\mathcal{L}}^{-2}+\varsigma-C(\rho)>0
\end{align}
is fulfilled for any $\varepsilon_{\mathfrak{L}}$ if $\varsigma-C(\rho)>0$, otherwise we have to choose $\varepsilon_{\mathfrak{L}}$ small enough, namely
\begin{align*}
    \varepsilon_{\mathcal{L}} < \left( C(\rho) - \varsigma\right)^{-1/2}=\varepsilon^*
\end{align*}
Then integrating \eqref{rrrrrrrrr} we derive that the norm contracts with rate 
$    \min\left\{\varepsilon_{\mathfrak{L}}^{-2}+\varsigma-C(\rho), \frac{ \lambda\tilde{\varepsilon}}{\gamma_o }\right\} 
$.
%

\end{proof}
\begin{rem}
\label{remLminlambda}
The assumption $\mathcal{L}_{R_o}< \lambda$ is physically realistic. Indeed, if we take a radiation function, as in \cite{MAOOAM}, which is independent of temperature $T$, the assumption is trivially true because the constant $\mathcal{L}_{R_o}$ is zero.

In the EBM models non-constant radiation functions of the following type are considered: $R(x,y,T)=q(x,y)\beta(T)$, where $q(x,y)$ is the insolation function, e.g. as in \cite[Section 6.3]{Peix},
\[q(x,y)=S\bl(\frac{d_m}{d}\br)^2\cos Z,\]
and $\beta(T)$ is the effective coalbedo of the ocean, e.g.  taken in \cite{KapEng}, \cite{Peix} as a piecewice linear function of the temperature,
\begin{align}
\label{coalb}
\beta(T)=\begin{cases}
\beta_{-}\ &T\le T_{-}\\
\beta_{-}+\frac{T-T_{-}}{T_{+}-T_{-}}(\beta_{+}-\beta_{-})\ &T\in [T_{-},T_{+}]\\
\beta_{+}\ &T\ge T_{+}
\end{cases}
\end{align}
with $\beta_{-}=0.3,\ \beta_{+}=0.7,\ T_{-}=250\ K$ and $T_{+}=280\ K$, see \cite[Chapter 2]{KapEng}. 
Now, $S=1360\ Wm^{-2}$ is the solar constant, $d_m$ and $d$ are respectively the mean and actual distance between Sun and Earth and $Z$ is the Sun's Zenith angle.
Clearly $\cos Z\le 1$ and using the values presented in \cite{Peix} we have  $d_m^2/d^2\in (0.96, 1.04)$.
This gives us an explicit value for the constant $\mathcal{L}_{R_o}$ of the radiation function for the ocean, and using the numerical values mentioned we find
\[ \mathcal{L}_{R_o}\leq 1.04 \times S\frac{\beta_{+}-\beta_{-}}{T_{+}-T_{-}}= 1.36\times 10^3\cdot 1.33\times 10^{-2}=18.1\ WK^{-1}m^{-2}.\]

We conclude that for the value of $\lambda$ used for example in \cite{VannitsemGhil}, namely $\lambda\approx 20\ WK^{-1}m^{-2}$, the relation $\mathcal{L}_{R_o}< \lambda$ holds.
\end{rem}

Theorem~\ref{teodetfunct} shows that larger values of $\lambda$ lead to an enslaving effect on the ocean temperature which is in line with a more general stabilization of the system observed in \cite{VannitsemGhil}, see Figure~14 there. Leaving aside the different boundary conditions among our frameworks, the values of $\lambda$ we consider include those for which \cite{VannitsemGhil} conjecture the system to be chaotic. Moreover, Vannitsem at al. argue that the low-frequency variability is due to the coupling between the ocean and atmosphere temperature parametrised by $d$, the wind drag coefficient. Theorem~\ref{teodetfunct} shows that asymptotically the ocean temperature is determined by the streamfunctions: however, this does not exclude that the ocean temperature, though being a function of the streamfunctions, creates a non-trivial additional coupling between atmosphere and ocean. Indeed, this additional coupling may give rise to the aforementioned low-frequency variability studied in \cite{VannitsemGhil}. 
We can try to find a quantitative estimate for the order of magnitude which appear in our condition for  $\varepsilon_{\mathfrak{L}}$. Let us consider the case of a set of functionals given by the projections on the $N$-dimensional space spanned by the eigenfunctions $e_j$ of $-\Delta$. 
In this case 
estimate \eqref{detfunPoinc1} holds with $\varepsilon^2_{\mathfrak{L}}
=2(\nu_S\lambda_{N+1})^{-1}$, where $\lambda_{N+1}$ is the $(N+1)$-eigenvalue of $-\Delta$ taking the eigenvalues in increasing order. We have 
$\varepsilon_{\mathfrak{L}}\sim L/\sqrt{\nu_S N}$. 
In order to fulfil estimate \eqref{small} (ignoring $\varsigma $ as it is small) we get
\begin{align}
    \label{epsN}
    N\sim\frac{L^2}{\nu_S \varepsilon^2_{\mathfrak{L}}}\ge 10^{12}C(\rho),
\end{align}
where $C(\rho)$ is at least of the same order of $\rho^2_{\bbW}$, then approximately $10^{34}$ using the values we found in Section~\ref{existattr}. This estimate for $N$ appears independently whether we consider functionals which depend on the ocean temperature or not.
We see that, as usual for this technique, the $N$ has to be seen as a worst case scenario overestimate. 
 \begin{rem}
    Proceeding similarly to what we have done to show \cref{teodetfunct}, one can derive conditions for determining functionals that are independent of one or more streamfunctions. In this case the conditions we obtain require the relevant friction coefficients to be large enough. The suitable values of the friction parameters are not physically realistic in the sense that they are  of the same order of magnitude as our bounds on the radii of the absorbing sets, which in turn are unrealistically large. This is a usual outcome of mathematical analysis of these type of models, it is more surprising that in \cref{teodetfunct} we could obtain the existence of asymptotically determining modes for physically realistic values of the parameters.
 %
\end{rem}

\appendix
\section{Regularity of the Laplacian in a rectangular domain}
\label{rectdom}
In this appendix we want to sketch some regularity results for the Laplacian on domains which are rectangles. 
The classic regularity in $H^2$ for the solution of the Dirichlet problem is a well-known result also for nonsmooth convex domains, see \cite[Theorem 3.2.1.2 and Theorem 4.3.1.4]{Grisv}.
In order to get more regularity of the solution of the Dirichlet problem on the rectangle an extra assumption on the non-homogeneous term $f$ is needed to obtain a similar result to \cite[Proposition 5.1]{LM}, which holds only for regular domains. This extra assumption on $f$ is stated in in the following theorem:
\begin{thm}[\cite{compcond}, Proposition 1]
    \label{teocompcond}
    For given $f\in H^{2k}$, the solution of the Dirichlet problem lies in $H^{2k+2}$ if and only if \[ \restr{C_jf}{\mathcal{C}}:=\sum_{i=1}^j(-1)^{i+1}\partial_x^{2j-2i}\partial_y^{2i-2}\restr{f}{\mathcal{C}}=0, \quad \text{for all } j=1,\dots, k,\]
    where $\mathcal{C}$ denotes the set consisting of the four corners of the rectangle $\Omega$.
\end{thm}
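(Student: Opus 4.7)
The plan is to prove the two implications by essentially independent arguments: the necessity of the corner conditions by a direct iterative computation at each corner, and the sufficiency by induction on $k$ combined with a localization-and-reflection argument near the corners.

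For necessity, I would start by noting that in two dimensions the Sobolev embedding $H^{2k+2}(\Omega)\hookrightarrow C^{2k}(\overline{\Omega})$ ensures that all partial derivatives of $u$ of order at most $2k$ are continuous up to the boundary, and in particular up to each corner. Fix a corner, say $(0,0)$. Since $u=0$ on both adjacent sides $\{y=0\}$ and $\{x=0\}$, all tangential derivatives of $u$ along each side vanish there, so at $(0,0)$ one has $\partial_x^{m} u=0$ and $\partial_y^{m} u=0$ for every $0\le m\le 2k$. I would then iterate the identity $\partial_y^{2}u=f-\partial_x^{2}u$, which induction on $j$ gives
\begin{equation*}
    \partial_y^{2j}u \;=\; \sum_{i=0}^{j-1}(-1)^{i}\partial_x^{2i}\partial_y^{2(j-1-i)}f \;+\;(-1)^{j}\partial_x^{2j}u.
\end{equation*}
Evaluating at $(0,0)$, both $\partial_y^{2j}u$ (left-hand side) and $\partial_x^{2j}u$ (last right-hand term) vanish by the boundary conditions, so the remaining sum vanishes. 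A reindexing $i\mapsto j-i+1$ identifies this sum, up to an overall sign, with $C_j f|_{(0,0)}$. Repeating the argument at each of the four corners yields $C_j f|_{\mathcal{C}}=0$ for $j=1,\dots,k$.

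For sufficiency, I would proceed by induction on $k$. The base case $k=0$ is the classical $H^2$-regularity of the Dirichlet problem on a convex polygon (no compatibility needed), cited in the text from Grisvard. Assume the statement up to order $k-1$. Given $f\in H^{2k}$ with $C_j f|_{\mathcal{C}}=0$ for $j\le k$, the inductive hypothesis already delivers $u\in H^{2k}$. Away from the corners the boundary is smooth, and standard elliptic regularity for the Dirichlet Laplacian with flat boundary yields $u\in H^{2k+2}$ on a neighbourhood of every non-corner boundary point and on the interior. It therefore suffices to upgrade the regularity in a small neighbourhood of each corner. Near the corner $(0,0)$ I would introduce a smooth cutoff $\chi$ supported in a small square around the corner and study $v:=\chi u$, which solves an elliptic problem with right-hand side $-\chi f+[\Delta,\chi]u$ and zero Dirichlet condition on the two meeting sides. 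The strategy is then to reflect: extend $v$ across $\{y=0\}$ by odd reflection, then across $\{x=0\}$ by odd reflection, to obtain a function $\tilde v$ defined on a full neighbourhood of the origin in $\mathbb{R}^2$. Correspondingly, $\tilde f:=-\Delta\tilde v$ is the even–even reflection of the right-hand side, and $\tilde v$ solves $-\Delta\tilde v=\tilde f$ in this neighbourhood with no boundary.

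The heart of the sufficiency is then the claim that the double even reflection of $f$ produces an $H^{2k}$ function on a neighbourhood of $(0,0)$ in $\mathbb{R}^2$ precisely because the $C_j f(0,0)$ vanish for $j\le k$. Trace theory together with the fact that $f$ is already smooth (in $H^{2k}$) in each of the two smooth half-strip reflections shows that the only residual obstruction to $H^{2k}$-regularity across the reflected axes concentrates at the corner; the explicit form of this obstruction, computed via the Taylor expansion of $\tilde f$ at the origin and the parity structure of even reflection, is exactly the vanishing of $C_j f(0,0)$ for $j=1,\dots,k$. With $\tilde f\in H^{2k}$ on a neighbourhood of the origin, interior elliptic regularity for the Laplacian on $\mathbb{R}^2$ gives $\tilde v\in H^{2k+2}$, which restricts to $H^{2k+2}$ regularity of $\chi u$ near the corner. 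A partition of unity assembles the global conclusion $u\in H^{2k+2}(\Omega)$.

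The main obstacle I anticipate is the last point: verifying precisely that the $C_j$ conditions encode the full corner obstruction for the double reflection. The along-edge compatibility needed for a single reflection is automatic from the interior regularity of $f$ away from the corner (since the obstruction there reduces to a trace condition that is consistent for functions in $H^{2k}$ vanishing in the appropriate sense), so all residual information genuinely lives at the corner point; the bookkeeping of which even-order mixed derivatives of $f$ must vanish there is exactly what produces the alternating-sign operator $C_j$ and matches the identity derived in the necessity step. Making this bookkeeping rigorous (using Whitney-type extension or an explicit Taylor-polynomial correction in the spirit of Kondratiev's corner theory) is the technical crux of the proof.
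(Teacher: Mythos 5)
The paper does not prove this statement: it is quoted verbatim from the cited reference (Proposition~1 of \cite{compcond}) and used as a black box in Appendix~A, so there is no internal proof to compare against. Judged on its own terms, your necessity argument is correct and complete: the embedding $H^{2k+2}(\Omega)\hookrightarrow C^{2k}(\overline{\Omega})$ in two dimensions, the vanishing of all pure tangential derivatives of $u$ along each edge, and the iterated identity $\partial_y^{2j}u=\sum_{i=0}^{j-1}(-1)^i\partial_x^{2i}\partial_y^{2(j-1-i)}f+(-1)^j\partial_x^{2j}u$ evaluated at a corner do yield $C_jf|_{\mathcal{C}}=0$ for $j\le k$ after your reindexing (the overall factor $(-1)^{j+1}$ is harmless).

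The sufficiency half, however, has a genuine gap at the reflection step. First, a sign bookkeeping error: if $\tilde v$ is the odd extension of $v$ across $\{y=0\}$, then $-\Delta\tilde v$ is the \emph{odd} extension of $-\Delta v$, not the even one, so the reflected right-hand side is the odd--odd extension of $-\chi f+[\Delta,\chi]u$. More seriously, your claim that ``the along-edge compatibility needed for a single reflection is automatic'' is false: the odd extension of a function $g\in H^{2k}$ across an edge belongs to $H^{2k}$ only if $g,\ \partial_y^2 g,\dots,\partial_y^{2k-2}g$ all have vanishing trace along that entire edge, and a generic $f\in H^{2k}(\Omega)$ does not vanish on $\partial\Omega$. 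Equivalently, reflecting $u$ itself already fails at order three, since $\partial_y^2u|_{y=0}=\pm f|_{y=0}\neq 0$ in general, so the odd extension of $u$ is capped at $H^2$ across the edge even though the true solution is $H^{2k+2}$ up to the smooth part of the boundary. The obstruction you are trying to concentrate at the corner is therefore an artefact of the reflection technique, not of the problem; reflection is simply the wrong tool here. The regularity away from the corners should instead be obtained by the standard flattening/tangential-difference-quotient argument (which needs no compatibility), and the corner analysis requires a genuinely different device --- a Kondrat'ev/Mellin analysis in the quarter-plane, a singular-function expansion in the spirit of Grisvard with the $C_j$ conditions killing the coefficients of the singular terms $r^{2j}\log r\,\Phi_j(\theta)$, or the Fourier sine-series computation of the cited reference, where the $C_jf|_{\mathcal{C}}$ arise as the corner boundary terms in repeated integration by parts. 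As written, the claimed equivalence between $\tilde f\in H^{2k}$ near the origin and the vanishing of $C_jf$ at the corner cannot hold, so the second half of the proof does not go through.
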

The above theorem can be used to show that $D(A^k)$ as defined in \eqref{specdec} is a subset of $H^{2k}$. First, we can observe that if $\restr{A^ju}{\partial\Omega}=0$ for all $j=0,\dots,k-1$ then $C_l \restr{A^{k-l}u}{\mathcal{C}}=0$ for all $l=1,\dots,k$: indeed, for example on the boundary on which $y$ is constant one has $\partial_x^{i}\partial_y^{2j}u=0$ for all $i,j\in\bbN$. Now, if $u\in D(A^k)$ then $A^{k-1}u\in D(A)$. Since $A^{k-1}u$ satisfies the compatibility condition $C_1A^{k-1}u=0$ on $\mathcal{C}$, applying Theorem~\ref{teocompcond} we get $A^{k-2}u\in H^4$. Then $A^{k-2}u$ also fulfils the compatibility conditions $C_1A^{k-2}u=C_2A^{k-2}u=0$ on $\mathcal{C}$, hence we get $A^{k-3}u\in H^6$. Iterating this procedure we find $u\in H^{2k}$. For the odd powers one can follow the same procedure and find $D(A^{k+\frac{1}{2}})\subset H^{2k+1}$.
This inclusion implies that we can write $D(A^k)$ as in \eqref{defDAalter}, and
the equivalence between the norms $\Vert \cdot \Vert_{H^k}$ and $|A^{k/2}\cdot|$ is a consequence of the Open Mapping Theorem applied to $\mathcal{I}: H^k\rightarrow D(A^{k/2})$.

\section*{Acknowledgments} The work presented here greatly benefited from fruitful discussions with a number of colleagues. In particular we are grateful to Stephane Vannitsem, Jochen Bröcker, Jonathan Demaeyer, and Valerio Lucarini. TK and GC are members of the INdAM-GNAMPA. TK gratefully acknowledge the partial support by PRIN2022-
PNRR–Project N.P20225SP98 ``Some mathematical approaches to climate change and its impacts''.
\bibliographystyle{plain}
\bibliography{references}

\noindent Federico Fornasaro$^1$\\
{\footnotesize
Dipartimento di Matematica\\
Università degli Studi di Roma La Sapienza \\
Email: \url{federico.fornasaro@uniroma1.it}\\
}

\noindent Tobias Kuna$^2$\\
{\footnotesize
Dipartimento di Ingegneria e Scienze dell'Informazione e Matematica \\
Universit\`a degli Studi Dell'Aquila \\
Email: \url{tobias.kuna@univaq.it}\\
}

\noindent Giulia Carigi$^3$\\
{\footnotesize
Department of Statistics\\
Indiana University Bloomington \\
Email: \url{gcarigi@iu.edu} \\
}

\end{document}